\def\cleverefoptions{capitalize}
\pgfplotsset{compat=1.5}
\DeclareRobustCommand{\cev}[1]{%
        {\mathpalette\do@cev{#1}}%
}
\newcommand{\do@cev}[2]{%
        \vbox{\offinterlineskip
                \sbox\z@{$\m@th#1 x$}%
                \ialign{##\cr
                        \hidewidth\reflectbox{$\m@th#1\vec{}\mkern4mu$}\hidewidth\cr
                        \noalign{\kern-\ht\z@}
                        $\m@th#1#2$\cr
                }%
        }%
}
\def\ov#1{\overline{#1}}
\def\wh#1{\widehat{#1}}
\def\wt#1{\widetilde{#1}}
\newcommand{\whd}[1]{\stackon[-6.5pt]{$\dot {#1}$}{$\widehat{}$}}
\def\mod{\mathrm{mod}}
\DeclareMathSymbol{\shortminus}{\mathbin}{AMSa}{"39}
\newcommand{\ab}{\allowbreak}
\newcommand{\eqdef}{\coloneqq}
\renewcommand{\AA}{\mathbb{A}}
\newcommand{\bAA}{\mathbf{A}}
\newcommand{\DD}{\mathbb{D}}
\newcommand{\EE}{\mathbb{E}}
\newcommand{\ZZ}{\mathbb{Z}}
\newcommand{\NN}{\mathbb{N}}
\newcommand{\QQ}{\mathbb{Q}}
\newcommand{\RR}{\mathbb{R}}
\newcommand{\PP}{\mathbb{P}}
\newcommand{\CCC}{\mathbb{C}}
\newcommand{\MM}{\mathbb{M}}
\newcommand{\CA}{\mathcal{A}}
\newcommand{\CD}{\mathcal{D}}
\newcommand{\CH}{\mathcal{H}}
\newcommand{\CF}{\mathcal{F}}
\newcommand{\CN}{\mathcal{N}}
\newcommand{\bh}{\mathbf{h}}
\DeclareMathOperator{\crk}{\mathbf{crk}}
\DeclareMathOperator{\specc}{{\mathbf{specc}}}
\newcommand{\sgn}{\mathrm{sgn}}
\newcommand{\Dyn}{\mathrm{Dyn}}
\newcommand{\Gl}{\mathrm{Gl}}
\newcommand{\Ad}{\mathrm{Ad}}
\DeclareMathOperator{\Ker}{Ker}
\providecommand{\tikzsetnextfilename}[1]{}%
\providecommand{\tikzexternalenable}{}%
\providecommand{\tikzexternaldisable}{}%
\newcommand{\mppms}{\scalebox{0.75}[1]{$\scriptstyle -$}}
\newcommand{\mppps}{\raisebox{.5pt}{\scalebox{0.75}{$\scriptstyle +$}}}
\newcommand{\mppmss}{\scalebox{0.75}[1]{$\scriptscriptstyle -$}}
\newcommand{\mpppss}{\raisebox{.375pt}{\scalebox{0.75}{$\scriptscriptstyle +$}}}
\newcommand{\grafcrkzA}[2]{\tikzsetnextfilename{grafcrkzA_#1_#2}%
\tikzexternalenable}
\let\c@figure\c@equation
\let\c@table\c@equation
\let\c@algorithm\c@equation
\let\ftype@table\ftype@figure 
\newtheorem{theorem}[equation]{Theorem}
\newtheorem{lemma}[equation]{Lemma}
\newtheorem{corollary}[equation]{Corollary}
\newtheorem{proposition}[equation]{Proposition}
\newtheorem{conjecture}[equation]{Conjecture}
\newtheorem{open}[equation]{Open problem}
\newtheorem{fact}[equation]{Fact}
\theoremstyle{definition}
\newtheorem{definition}[equation]{Definition}
\newtheorem{remark}[equation]{Remark}
\newtheorem{example}[equation]{Example}
\numberwithin{equation}{section}
\numberwithin{table}{section}
\numberwithin{figure}{section}
\title{Structure of non-negative posets of Dynkin type $\AA_n$}
\author{Marcin G\k{a}siorek\\
\small Faculty of Mathematics and Computer Science\\[-0.8ex]
\small Nicolaus Copernicus University\\[-0.8ex] 
\small ul. Chopina 12/18, 87-100 Toru\'n, Poland\\
\small\tt mgasiorek@mat.umk.pl}
\begin{document}

\maketitle

\begin{abstract}
A poset $I=(\{1,\ldots, n\}, \leq_I)$  is called \textit{non-negative} if the symmetric Gram matrix $G_I:=\frac{1}{2}(C_I + C_I^{tr})\in\MM_n(\RR)$ is positive semi-definite, where $C_I\in\MM_n(\ZZ)$ is the $(0,1)$-matrix encoding the relation $\leq_I$. Every such a connected poset $I$, up to the $\ZZ$-congruence of the $G_I$ matrix, is determined by a unique simply-laced Dynkin diagram $\Dyn_I\in\{\AA_m, \DD_m,\EE_6,\EE_7,\EE_8\}$.
We show that $\Dyn_I=\AA_m$ implies that the matrix $G_I$ is of rank  $n$ or $n-1$. Moreover, we  depict explicit shapes of Hasse digraphs $\CH(I)$ of all such posets~$I$  and devise formulae for their number.\medskip

\noindent\textbf{Mathematics Subject Classifications:} 05C50, 06A07, 06A11, 15A63, 05C30
\end{abstract}

\section{Introduction}\label{sec:intro}
By a finite partially ordered set (\textit{poset}) $I$ of \textit{size} $n$ we mean a pair $I=(V, \leq_I)$, where $V\eqdef \{1,\ldots, n\}$ and \(\leq_I\,\subseteq V\times V\) is a reflexive, antisymmetric and transitive binary relation. Every poset $I$ is uniquely determined by its \textit{incidence matrix} 
\begin{equation}\label{df:incmat}
C_{I} = [c_{ij}] \in\MM_{n}(\ZZ),\textnormal{ where } c_{ij} = 1 \textnormal{ if } i \leq_I j\textnormal{ and } c_{ij} = 0\textnormal{ otherwise},
\end{equation}
i.e., a  square $(0,1)$-matrix that encodes the relation \(\leq_I\). It is known that various mathematical classification problems can be solved by a reduction to the classification of indecomposable $K$-linear representations ($K$~is a field) of finite digraphs or matrix representations of finite posets, see~\cite{Si92}. Inspired by these results, here we study posets that are non-negative in the following sense. 
A poset $I$\ is defined to be \textit{non-negative} of \textit{rank $m$} if its \textit{symmetric Gram matrix} $G_I\eqdef\tfrac{1}{2}(C_I+C_I^{tr})\in\MM_n(\RR)$ is positive semi-definite of rank~$m$. 

Non-negative posets are classified by means of signed simple graphs as follows. One associates with a poset $I=(V, \leq_I)$\ the signed graph $\Delta_I=(V,E,\sgn)$ with the set  of edges  $E=\{\{i,j\};\ i<_I j \textnormal{ or } j <_I i\}$ and the sign function $\sgn(e)\eqdef1$ for every edge (i.e., signed graph with \textit{positive} edges only), see~\cite{SimZaj_intmms} and \Cref{rmk:graphbigraph}.
In particular, $I$ is called connected, if $\Delta_I$ is connected. We note that $\Delta_I$ is uniquely determined by its adjacency matrix $\Ad_{\Delta_I}\eqdef 2(G_I-\mathrm{id}_n)$, where $\mathrm{id}_n\in\MM_n(\ZZ)$ is an identity matrix.
Analogously as in the case of posets, a signed graph $\Delta$ is defined to be \textit{non-negative} of rank $m$ if its \textit{symmetric Gram matrix} $G_\Delta\eqdef \frac{1}{2}\Ad_\Delta + \mathrm{id}_n$ is positive semi-definite of rank $m$. Following \cite{simsonCoxeterGramClassificationPositive2013}, we call two signed graphs  $\Delta_1$ and $\Delta_2$  \textit{weakly Gram $\ZZ$-congruent} if $G_{\Delta_1}$ and $G_{\Delta_2}$ are \textit{$\ZZ$-congruent}, i.e., $G_{\Delta_2}=B^{tr}G_{\Delta_1}B$ for some $B\in\Gl_n(\ZZ)\eqdef\{A\in\MM_n(\ZZ);\,\det A=\pm 1\}$. It is easy to check that this relation preserves  definiteness and rank. 

We recall from \cite{simsonSymbolicAlgorithmsComputing2016} and~\cite{zajacStructureLoopfreeNonnegative2019} that every connected non-negative signed simple graph $\Delta$ of rank $m=n-r$ is weakly Gram $\ZZ$-congruent with the canonical $r$-vertex extension of  simply laced Dynkin diagram $\Dyn_\Delta \in \{\AA_m,\ab \DD_m,\ab \EE_6,\ab \EE_7,\ab \EE_8\}$, called the \textit{Dynkin type} of~$\Delta$. In particular, every \textit{positive} (i.e.,~of rank~$n$) connected  $\Delta$ is weakly Gram $\ZZ$-congruent with a unique simply-laced Dynkin diagram $\Dyn_\Delta$ of Table \ref{tbl:Dynkin_diagrams}.

\begin{longtable}{@{}r@{\,}l@{\,}l@{\quad}r@{\,}l@{}}
    $\AA_n\colon$ & \grapheAn{0.80}{1} & $\scriptstyle (n\geq 1);$\\[0.2cm]
    $\DD_n\colon$ & \grapheDn{0.80}{1} & $\scriptstyle (n\geq 1);$ & $\EE_6\colon$ & \grapheEsix{0.80}{1}\\[0.2cm]
    $\EE_7\colon$ & \grapheEseven{0.80}{1} &  & $\EE_8\colon$ & \grapheEeight{0.80}{1}\\[0.2cm]
    \caption{Simply-laced Dynkin diagrams}\label{tbl:Dynkin_diagrams}
\end{longtable}
\noindent Analogously, every \textit{principal} (i.e.,~of rank~$n-1$) connected bigraph $\Delta$ is weakly Gram $\ZZ$-congruent with $\widetilde{\mathrm{D}}\mathrm{yn}_\Delta \in \{\widetilde{\AA}_n,\ab \widetilde{\DD}_n,\ab \widetilde{\EE}_6,\ab \widetilde{\EE}_7,\ab \widetilde{\EE}_8\}$ diagram of Table \ref{tbl:Euklid_diag}, which is a one point extension of a diagram of \Cref{tbl:Dynkin_diagrams}.

\begin{longtable}{@{}r@{$\colon$}l@{\ \ \ }r@{$\colon$}l@{}}
    $\widetilde{\AA}_n$ & \grapheRAn{0.80}{1}\ {$\scriptstyle (n\geq 1)$;}\vspace{-0.3cm} \\
    $\widetilde{\DD}_n$ & \grapheRDn{0.80}{1}\ {$\scriptstyle (n\geq 4)$;} & $\widetilde{\EE}_6$ & \grapheREsix{0.80}{1} \\
    $\widetilde{\EE}_7$ & \grapheREseven{0.80}{1} &  $\widetilde{\EE}_8$ & \grapheREeight{0.80}{1}\\[0.2cm]
    \caption{Simply-laced Euclidean diagrams}\label{tbl:Euklid_diag}
\end{longtable}\vspace*{-2ex}

\begin{remark}\label{rmk:graphbigraph}
We are using the following notations, see \cite{barotQuadraticFormsCombinatorics2019,simsonCoxeterGramClassificationPositive2013,simsonSymbolicAlgorithmsComputing2016,SimZaj_intmms}.
\begin{enumerate}[label={\textnormal{(\alph*)}},wide]
 \item\label{rmk:graphbigraph:graphasbigraph} 
A simple graph $G=(V,E)$ is viewed as the signed graph $\Delta_G=(V,E,\sgn)$  with a sign function $\sgn(e)\eqdef-1$ for every $e\in E$, i.e., signed graph with \textit{negative} edges only.
\item\label{rmk:graphbigraph:bigraphdraw} We denote \textit{positive} edges  by dotted lines and \textit{negative} as full~ones, see~\cite{barotQuadraticFormsCombinatorics2019,simsonCoxeterGramClassificationPositive2013}.
\end{enumerate}
\end{remark}

By setting $\Dyn_I\eqdef \Dyn_{\Delta_I} $ one associates a Dynkin diagram with an arbitrary connected  non-negative poset~$I$.

In the present work, we give a complete description of connected non-negative posets $I=(V,\leq_I)$ of Dynkin type $\Dyn_I=\AA_m$ in terms of their \textit{Hasse digraphs} $\CH(I)$, where $\CH(I)$ is  the transitive reduction of the acyclic digraph  $\CD(I)=(V, A_I)$, with  $i\to j\in A_I$ iff  $i<_I j$ (see also Definition~\ref{df:hassedigraph}). The main result of the manuscript is the following theorem that establishes the correspondence between combinatorial and algebraic properties of non-negative posets of Dynkin type $\AA_m$.\pagebreak

\begin{theorem}\label{thm:a:main}
Assume that $I$ is a connected poset of size $n$ and $\CH(I)$ is its Hasse digraph.
\begin{enumerate}[label=\normalfont{(\alph*)}]
        \item\label{thm:a:main:posit} $I$ is non-negative of Dynkin type $\Dyn_I=\AA_n$ if and only if
        $\ov \CH(I)$ is a path graph.
        \item\label{thm:a:main:princ} $I$ is non-negative of Dynkin type $\Dyn_I=\AA_{n-1}$ if and only if $\ov\CH(I)$ is a cycle graph and $\CH(I)$ has at least two sinks.
        \item\label{thm:a:main:crkbiggeri} If $I$ is non-negative of Dynkin type $\Dyn_I=\AA_{m}$, then $m\in \{n,n-1\}$.

\end{enumerate}
\end{theorem}

In particular, we confirm Conjecture 6.4 stated in~\cite{gasiorekAlgorithmicCoxeterSpectral2020} by showing that in the case of  connected non-negative posets of Dynkin type $\AA_m$, there is a one-to-one correspondence between positive  posets and connected digraphs whose underlying graph is a path. We give a similar description of principal posets: there is a one-to-one correspondence between such posets and connected digraphs with at least two sinks, whose underlying graph is a cycle. We show that this characterization is complete: there are no connected non-negative posets of rank $m<n-1$.

Moreover, using the results of Theorem~\ref{thm:a:main}, we devise a formula for the number of all, up to isomorphism, connected non-negative posets of Dynkin type $\AA_m$.
\begin{theorem}\label{thm:typeanum}
        Let $Nneg(n,\AA)$ be the number of all non-negative posets $I$ of size $n\geq1$ and Dynkin type $\Dyn_I=\AA_{m}$. Then
        \begin{equation}\label{thm:typeanum:eq}
                Nneg(n, \AA)=
                \frac{1}{2n} \sum_{d\mid n}\big(2^{\frac{n}{d}}\varphi(d)\big) + 
\big\lfloor 2^{n - 2} + 2^{\lceil\frac{n}{2}-2\rceil} - \tfrac{n+1}{2}\big\rfloor,
        \end{equation}
        where $\varphi$ is Euler's totient function.
\end{theorem}

\section{Preliminaries}

Throughout, we mainly use the terminology and notation introduced in~\cite{gasiorekOnepeakPosetsPositive2012,gasiorekAlgorithmicStudyNonnegative2015,Si92,SimZaj_intmms} (in regard to posets), \cite{barotQuadraticFormsCombinatorics2019,simsonIncidenceCoalgebrasIntervally2009,simsonCoxeterGramClassificationPositive2013,simsonSymbolicAlgorithmsComputing2016} (quadratic forms), and~\cite{diestelGraphTheory2017} (graph theory). In particular, by $\NN\subseteq\ZZ\subseteq\RR$ we denote the set of non-negative integers, the ring of integers, and the real number fields, respectively.  We use a row notation for vectors $v=[v_1,\ldots,v_n]$ and write $v^{tr}$ to denote column vectors.

Two (directed) graphs $G=(V,E)$ and $G'=(V',E')$ are called \textit{isomorphic} $G\simeq G'$ if there exist a bijection $f\colon V\to V'$ that preserves (arcs) edges. By degree $\deg_G(v)$ of a vertex $v\in V$  we mean the number of edges incident with $v$. We call $G$ a \textit{path graph} if $|V|=1$ and $|E|=0$ or \mbox{$G\simeq\,P(u,v)\eqdef \,  u\scriptstyle \bullet\,\rule[1.5pt]{22pt}{0.4pt}\,\bullet\,\rule[1.5pt]{22pt}{0.4pt}\,\,\hdashrule[1.5pt]{12pt}{0.4pt}{1pt}\,\rule[1.5pt]{22pt}{0.4pt}\,\bullet \displaystyle v$}, where $u\neq v$. We call $G$ a \textit{cycle}, if $|G|\geq 3$ and $G\simeq\!\!\!\!\smash{%
\tikzsetnextfilename{cyclepic_graph}\begin{tikzpicture}[baseline=(n11.base),label distance=-2pt,xscale=0.65, yscale=0.74]
        \node[circle, fill=black, inner sep=0pt, minimum size=3.5pt, label={[name=n11]left:\phantom{$|$}}] (n1) at (0, 0  ) {};
        \node[circle, fill=black, inner sep=0pt, minimum size=3.5pt] (n2) at (1, 0  ) {};
        \node (n3) at (2, 0  ) {$ $};
        \node[circle, fill=black, inner sep=0pt, minimum size=3.5pt] (n4) at (5, 0  ) {};
        \node (n5) at (3, 0  ) {$ $};
        \node[circle, fill=black, inner sep=0pt, minimum size=3.5pt] (n6) at (4, 0  ) {};
        \draw[shorten <= 2.50pt, shorten >= 2.50pt] (n1) .. controls (0.2,0.45) and (4.8,0.45) .. (n4);
        \draw [line width=1.2pt, line cap=round, dash pattern=on 0pt off 5\pgflinewidth, -, shorten <= -2.50pt, shorten >= -2.50pt] (n3) to  (n5);
        \foreach \x/\y in {1/2, 6/4}
        \draw [-, shorten <= 2.50pt, shorten >= 2.50pt] (n\x) to  (n\y);
        \draw [-, shorten <= 2.50pt, shorten >= -2.50pt] (n2) to  (n3);
        \draw [-, shorten <= -2.50pt, shorten >= 2.50pt] (n5) to  (n6);
\end{tikzpicture}}$. 
A graph $G$ is \textit{connected} if $P(u, v)\subseteq G$ for every $u\neq v\in V$. By \textit{underlying graph} $\ov \CD$  of a digraph $\CD$ we mean a graph obtained from $\CD$  by forgetting the orientation of its arcs. A digraph $\CD$ is connected if  $\ov \CD$ is connected. A connected graph $G$  is called a \textit{tree} if $G$ does not contain any cycle. A digraph $\CD$ is called \textit{acyclic} if it contains no induced subdigraph isomorphic to\!\!\!\! \smash{%
\tikzsetnextfilename{cyclepic_orient}\begin{tikzpicture}[baseline=(n11.base),label distance=-2pt,xscale=0.65, yscale=0.74]
        \node[circle, fill=black, inner sep=0pt, minimum size=3.5pt, label={[name=n11]left:\phantom{$|$}}] (n1) at (0  , 0  ) {};
        \node[circle, fill=black, inner sep=0pt, minimum size=3.5pt] (n2) at (1  , 0  ) {};
        \node (n3) at (2  , 0  ) {$ $};
        \node[circle, fill=black, inner sep=0pt, minimum size=3.5pt] (n4) at (5  , 0  ) {};
        \node (n5) at (3  , 0  ) {$ $};
        \node[circle, fill=black, inner sep=0pt, minimum size=3.5pt] (n6) at (4  , 0  ) {};
        \draw[<-,shorten <= 2.50pt, shorten >= 3.50pt] (n1) .. controls (0.8,0.4) and (4.,0.4) .. (n4);
        \draw [line width=1.2pt, shorten <= 2.50pt, line cap=round, dash pattern=on 0pt off 5\pgflinewidth, -, shorten <= .50pt, shorten >= -4.50pt] (n3) to  (n5);
        \foreach \x/\y in {1/2, 6/4}
        \draw [->, shorten <= 2.50pt, shorten >= 2.50pt] (n\x) to  (n\y);
        \draw [->, shorten <= 2.50pt, shorten >= -2.50pt] (n2) to  (n3);
        \draw [->, shorten <= -2.50pt, shorten >= 2.50pt] (n5) to  (n6);
\end{tikzpicture}}.
We call a vertex $v$ of a digraph $\CD=(V, A)$ a \textit{source} (minimum) if it is not a target of any arc $\alpha\in A$. Analogously, we call $v\in \CD$ a \textit{sink} (maximum) if it is not a source of any arc.\smallskip

Given two elements $x,y\in V$ of a poset $I=(V,\leq_I)$ we write:
\begin{itemize}
\item $x <_I y$ when $x\leq_I y$ and $x\neq y$;
\item $x\lessdot_I y$ when $y$ covers $x$, i.e., $x<_I y$ and there is no such $z\in V,$ that $x<_I z<_I y$. 
\end{itemize}
Moreover, by $N_I(x)\eqdef\{z\in I;\, x \lessdot_{I} z\}\cup \{z\in I;\, z \lessdot_{I} x\}$ we denote the set of elements that either cover $x$ or are covered by $x$ in $I$.
\begin{definition}\label{df:hassedigraph}
\textit{Hasse digraph}  of a poset $I=(V,\leq_I)$ is a simple directed graph $\CH(I)=(V,A)$ with the set of arcs defined as follows: $x\to y\in A$ iff $x\lessdot_I y$.
\end{definition}

We call a  poset $I$ \textit{connected} if the graph $\ov \CH(I)\eqdef \ov{\CH(I)}$ is connected (equivalently, when the signed graph $\Delta_I$ is connected), and we note that every minimal (maximal) element in $I$ corresponds to a source (sink) in the Hasse digraph $\CH(I)$. We say that $I$ is a \textit{one-peak poset} if $I$ has exactly one maximal element.  Every finite acyclic digraph $\CD=(V,A)$ defines the poset $I_\CD\eqdef(V, {\leq_\CD)}$, where \mbox{$a \leq_\CD b$} if $a=b$ or there is an oriented path $\vec P(a,b)\eqdef\, a\scriptstyle \bullet \raisebox{-1.5pt}{\parbox{25pt}{\rightarrowfill}} \bullet \raisebox{-1.5pt}{\parbox{25pt}{\rightarrowfill}} \,\hdashrule[1.5pt]{12pt}{0.4pt}{1pt} \raisebox{-1.5pt}{\parbox{25pt}{\rightarrowfill}} \bullet \displaystyle b\subseteq \CD$. We note that $\CH(I_\CD)\neq \CD$ in general, see Example~\ref{ex:definitions}. By $I^{(k_1,\ldots,k_r)}\eqdef I\setminus \{k_1,\ldots,k_r\}$ we denote the induced subposet of $I$ whose set of elements equals $\{1,\ldots,n\}\setminus\{k_1,\ldots,k_r\}$.\smallskip

Following~\cite{gasiorekOnepeakPosetsPositive2012,simsonCoxeterGramClassificationPositive2013,SimZaj_intmms}, we associate with a poset $I$ of size $n$:
\begin{itemize}
\item the unit quadratic form $q_I\colon\ZZ^n\to\ZZ$ defined by the formula
\begin{equation}\label{eq:quadratic_form}
 q_I(x):=\sum_{\mathclap{i\in\{1,\ldots,n\}}}x_i^2 + \sum_{\mathclap{i\,<_I\, j}}x_i x_j = v\cdot G_I\cdot v^{tr},
\end{equation}
\item and its kernel 
\begin{equation}\label{eq:kernel}
\Ker q_I \eqdef \{v \in\ZZ^n;\ q_I(v)=0\}\subseteq\ZZ^n,
\end{equation}
\end{itemize}
where $G_I\in\MM_n(\ZZ)$ is the symmetric Gram matrix of $I$. It is known that a poset $I$ is non-negative of rank $m$ if and only if the quadratic form $q_I$ is positive semi-definite (i.e., $q_I(v)\geq 0$ for every $v\in\ZZ^n$) and its kernel $\Ker q_I\subseteq \ZZ^n$ is a free abelian subgroup of rank $n-m$, see~\cite{simsonCoxeterGramClassificationPositive2013}. We call a non-negative poset $I$  \textit{positive} if $m=n$, \textit{principal} if $m=n-1$, and \textit{indefinite} if its symmetric Gram matrix $G_I$ is not positive/negative semidefinite.\smallskip

\begin{remark}\label{rmk:indef}
A poset $I$ is indefinite if and only if there exist such vectors $v,w\in\ZZ^n$ that $q_I(v)>0$ and $q_I(w)<0$. Since $q_I([1,0,\ldots,0])=1>0$ for every poset $I$, to show that given $I$ is indefinite, it suffices to show that $q_I(w)<0$ for some $w\in\ZZ^n$.
\end{remark}

\begin{example}\label{ex:definitions}
To illustrate the definitions, we consider digraph
$\CD=(\{1,2,3,4\}, \{{2\to 1},\ab 2\to 3, 2\to 4,\ab 1\to 3, 4\to 3\})$. 

We have $I_\CD=(\{1,2,3,4\}, \{2 \leq_\CD 1, 2 \leq_\CD 3, 2 \leq_\CD 4, 1 \leq_\CD 3, 4 \leq_\CD 3\})$,
{\newcommand{\mnum}[1]{\vbox to 11pt {\vfil\hbox to 9pt{\hfill$\scriptstyle #1$}\vfil }}\begin{align*}
\CD&=\tikzsetnextfilename{expic_1}\begin{tikzpicture}[baseline={([yshift=-2.75pt]current bounding box)},label distance=-2pt,xscale=0.66, yscale=0.6]
        \node[circle, fill=black, inner sep=0pt, minimum size=3.5pt, label=left:$\scriptscriptstyle 1$] (n1) at (1, 2  ) {};
        \node[circle, fill=black, inner sep=0pt, minimum size=3.5pt, label=above:$\scriptscriptstyle 2$] (n2) at (0, 1  ) {};
        \node[circle, fill=black, inner sep=0pt, minimum size=3.5pt, label=above:$\scriptscriptstyle 3$] (n3) at (2, 1  ) {};
        \node[circle, fill=black, inner sep=0pt, minimum size=3.5pt, label=right:$\scriptscriptstyle 4$] (n4) at (1, 0  ) {};
        \foreach \x/\y in {1/3, 2/1, 2/4, 4/3}
        \draw [-stealth, shorten <= 2.50pt, shorten >= 2.50pt] (n\x) to  (n\y);
        \draw [-stealth, shorten <= 2.00pt, shorten >= 2.10pt] (n2) to  (n3);
\end{tikzpicture},           &  \CH(I_\CD)&=\tikzsetnextfilename{expic_2}\begin{tikzpicture}[baseline={([yshift=-2.75pt]current bounding box)},label distance=-2pt,
        xscale=0.66, yscale=0.6]
        \node[circle, fill=black, inner sep=0pt, minimum size=3.5pt, label=left:$\scriptscriptstyle 1$] (n1) at (1, 2  ) {};
        \node[circle, fill=black, inner sep=0pt, minimum size=3.5pt, label=above:$\scriptscriptstyle 2$] (n2) at (0, 1  ) {};
        \node[circle, fill=black, inner sep=0pt, minimum size=3.5pt, label=above:$\scriptscriptstyle 3$] (n3) at (2, 1  ) {};
        \node[circle, fill=black, inner sep=0pt, minimum size=3.5pt, label=right:$\scriptscriptstyle 4$] (n4) at (1, 0  ) {};
        \foreach \x/\y in {1/3, 2/1, 2/4, 4/3}
        \draw [-stealth, shorten <= 2.50pt, shorten >= 2.50pt] (n\x) to  (n\y);
\end{tikzpicture},              &  \Delta_{I_\CD}&=
\tikzsetnextfilename{expic_3}\begin{tikzpicture}[baseline={([yshift=-2.75pt]current bounding box)},label distance=-2pt,
        xscale=0.66, yscale=0.6]
        \node[circle, fill=black, inner sep=0pt, minimum size=3.5pt, label=left:$\scriptscriptstyle 1$] (n1) at (1, 2  ) {};
        \node[circle, fill=black, inner sep=0pt, minimum size=3.5pt, label=above:$\scriptscriptstyle 2$] (n2) at (0, 1  ) {};
        \node[circle, fill=black, inner sep=0pt, minimum size=3.5pt, label=above:$\scriptscriptstyle 3$] (n3) at (2, 1  ) {};
        \node[circle, fill=black, inner sep=0pt, minimum size=3.5pt, label=right:$\scriptscriptstyle 4$] (n4) at (1, 0  ) {};
        \foreach \x/\y in {1/3, 2/1, 2/3, 2/4, 4/3}
        \draw [-, densely dashed, shorten <= 2.50pt, shorten >= 2.50pt] (n\x) to  (n\y);
\end{tikzpicture},\\
G_{I_\CD}&=\begin{bsmallmatrix*}[r]
        \mnum{1} & \mnum{\frac{1}{2}} & \mnum{\frac{1}{2}} & \mnum{0}\\
        \mnum{\frac{1}{2}} & \mnum{1} & \mnum{\frac{1}{2}} & \mnum{\frac{1}{2}}\\
        \mnum{\frac{1}{2}} & \mnum{\frac{1}{2}} & \mnum{1} & \mnum{\frac{1}{2}}\\
        \mnum{0} & \mnum{\frac{1}{2}} & \mnum{\frac{1}{2}} & \mnum{1}
\end{bsmallmatrix*}=G_{\Delta_{I_\CD}},         &  C_{I_\CD}&=\begin{bsmallmatrix*}[r]
        \mnum{1} & \mnum{0} & \mnum{1} & \mnum{0}\\
        \mnum{1} & \mnum{1} & \mnum{1} & \mnum{1}\\
        \mnum{0} & \mnum{0} & \mnum{1} & \mnum{0}\\
        \mnum{0} & \mnum{0} & \mnum{1} & \mnum{1}
\end{bsmallmatrix*},   &  \Ad_{I_\CD}&=\begin{bsmallmatrix*}[r]
        \mnum{0} & \mnum{1} & \mnum{1} & \mnum{0}\\
        \mnum{1} & \mnum{0} & \mnum{1} & \mnum{1}\\
        \mnum{1} & \mnum{1} & \mnum{0} & \mnum{1}\\
        \mnum{0} & \mnum{1} & \mnum{1} & \mnum{0}
\end{bsmallmatrix*},
\end{align*}}%
where we denote the positive edges of a signed graph by dotted lines. Moreover:
\begin{itemize}
        \item $2$ is minimal in $I_\CD$ and $3$ is maximal in $I_\CD$, equivalently: $2$ is a source in $\CH(I_\CD)$ and $3$ is a sink in $\CH(I_\CD)$,
        \item $q_{I_\CD}(x)= x_{1}^{2} + x_{2}^{2} + x_{3}^{2} + x_{4}^{2} +  
        x_{1}x_{2} +  x_{2}x_{3} + x_{3}x_{4}  + x_{1}x_{3} + x_{2}x_{4}=
        \left(x_{1} \!+\! \tfrac{1}{2}x_{2} \!+\! \tfrac{1}{2}x_{3}\right)^{2} \!+\!
        \tfrac{3}{4}\! \left(\tfrac{1}{3}x_{2} \!+\! x_{3} \!+\! \frac{2}{3} x_{4}\right)^{2} \!+\!
        \tfrac{2}{3}\! \left(x_{2} \!+\! \tfrac{1}{2}x_{4}\right)^{2} \!+\! \tfrac{1}{2}x_{4}^{2}$,
        \item $\Ker q_{I_\CD}=\{0\}\subseteq\ZZ^4$ and poset $I_\CD$  is non-negative of rank $4$, i.e., positive.
\end{itemize}\pagebreak
Since $G_{\DD_4}=B^{tr}G_{\Delta_{I_\CD}}B$, where

\begin{center}
\newcommand{\mnum}[1]{\vbox to 11pt {\vfil\hbox to 13pt{\hfill$\scriptstyle #1$}\vfil }}
$\DD_4=$
\tikzsetnextfilename{expic_4}\begin{tikzpicture}[baseline={([yshift=-2.75pt]current bounding box)},label distance=-2pt,
        xscale=0.66, yscale=0.6]
        \node[circle, fill=black, inner sep=0pt, minimum size=3.5pt, label=above:$\scriptscriptstyle 1$] (n1) at (0, 0  ) {};
        \node[circle, fill=black, inner sep=0pt, minimum size=3.5pt, label=right:$\scriptscriptstyle 2$] (n2) at (1, 1  ) {};
        \node[circle, fill=black, inner sep=0pt, minimum size=3.5pt, label=above right:$\scriptscriptstyle 3$] (n3) at (1, 0  ) {};
        \node[circle, fill=black, inner sep=0pt, minimum size=3.5pt, label=above:$\scriptscriptstyle 4$] (n4) at (2, 0  ) {};
        \foreach \x/\y in {1/3, 2/3,  3/4}
        \draw [-, shorten <= 2.50pt, shorten >= 2.50pt] (n\x) to  (n\y);
\end{tikzpicture},\quad
$G_{\DD_4}=\begin{bsmallmatrix*}[r]
        \mnum{1} & \mnum{0} & \mnum{-\frac{1}{2}} & \mnum{0}\\
        \mnum{0} & \mnum{1} & \mnum{-\frac{1}{2}} & \mnum{0}\\
        \mnum{-\frac{1}{2}} & \mnum{-\frac{1}{2}} & \mnum{1} & \mnum{-\frac{1}{2}}\\
        \mnum{0} & \mnum{0} & \mnum{-\frac{1}{2}} & \mnum{1}
\end{bsmallmatrix*}=\frac{1}{2}\Ad_{\DD_4} + \mathrm{id}_n$
and
$B=\begin{bsmallmatrix*}[r]
        \mnum{0} & \mnum{0} & \mnum{-1} & \mnum{0}\\
        \mnum{0} & \mnum{0} & \mnum{-1} & \mnum{0}\\
        \mnum{-1} & \mnum{-1} & \mnum{0} & \mnum{-1}\\
        \mnum{0} & \mnum{0} & \mnum{-1} & \mnum{0}
\end{bsmallmatrix*}$,
\end{center}
we conclude that $\Dyn_{I_\CD}=\DD_4$. To finish the example, we note that elements of the adjacency matrix $\Ad_{\DD_4}$ are negative because we view graph $\DD_4$ as a signed graph.
\end{example}

We recall from Section~\ref{sec:intro}, that the Dynkin type of a connected non-negative poset $I$ of size $n$ and rank $m$ is such a simply-laced Dynkin diagram $\Dyn_I\in\{\AA_m,\ab \DD_m,\ab \EE_6,\ab \EE_7,\ab \EE_8\},$ that the signed graph $\Delta_I$ is weakly Gram $\ZZ$-congruent with the canonical $r$-vertex extension of $\Dyn_I$, where $r=n-m$. Equivalently, Dynkin type  can  be defined without referring to canonical $r$-vertex extensions, see~\cite{gasiorekAlgorithmicStudyNonnegative2015,simsonSymbolicAlgorithmsComputing2016,zajacStructureLoopfreeNonnegative2019,zajacPolynomialTimeInflation2020} and \cite{barotQuadraticFormsCombinatorics2019}. Although such a definition is more technical, it yields better insight into the combinatorial structure of non-negative posets. First, we need the following fact.
\begin{fact}\label{fact:specialzbasis} 
Assume that $I$ is a connected non-negative poset of size $n$ and rank $m$, $r=n-m$, and $q_I\colon\ZZ^n\to\ZZ$~\eqref{eq:quadratic_form} is the quadratic form associated with $I$.
\begin{enumerate}[label=\normalfont{(\alph*)}]
    \item\label{fact:specialzbasis:existance} There exists such a basis $h^{k_1},\ldots, h^{k_r}$ of the free abelian group $\Ker q_I\subseteq \ZZ^n$, that $h^{k_i}_{k_i} = 1$ and $h^{k_i}_{k_j} = 0$ for $1 \leq i,j \leq r$ and $i \neq j$, where $1 \leq k_1 < \ldots  < k_r \leq n$.

    \item\label{fact:specialzbasis:subbigraph} $I^{(a_1,\ldots,a_s)}$ is a connected and non-negative poset of size $n-s$ and rank $m$, for every $\{a_1,\ldots,a_s\}\subseteq \{k_1,\ldots,k_r\}$ and $1\leq s\leq r$.
\end{enumerate}
\end{fact}
\begin{proof}
Apply \cite[Lemma 2.7]{zajacPolynomialTimeInflation2020} and \cite[Theorem 2.1]{zajacStructureLoopfreeNonnegative2019} to the bigraph $\Delta_I$.
\end{proof}

The Dynkin type of a connected non-negative poset $I$ is defined to be a Dynkin diagram $\Dyn_I \in \{\AA_m,\DD_m,\EE_6,\EE_7,\EE_8\}$ that determines $I$ uniquely, up to a weak Gram $\ZZ$-congruence.
\begin{definition}\label{df:Dynkin_type}
Assume that $I$ is a connected non-negative poset  of rank $m$ and size $n$. The Dynkin type $\Dyn_I  \in \{\AA_m,\DD_m,\EE_6,\EE_7,\EE_8\}$ is  the unique simply-laced Dynkin diagram, viewed as a signed graph, such that
\[
\Delta_{I'} \sim_\ZZ \Dyn_I,
\]
where $I'\eqdef I$ if $m=n$ and $I'\eqdef I^{(k_1,\ldots,k_r)}$ (see \Cref{fact:specialzbasis}\ref{fact:specialzbasis:subbigraph}) otherwise.\smallskip
\end{definition}

We note that Dynkin type $\Dyn_I$ can be calculated  using the inflation algorithm~\cite[Algorithm 3.1]{simsonCoxeterGramClassificationPositive2013} applied to the bigraph $\Delta_{I'}$.

{\tikzexternaldisable\newcommand{\mxs}{0.9}
        \begin{longtable}{@{}r@{\,}l@{\,}l@{\quad}r@{\,}l@{}}
                ${}_{p}\AA^*_{r}\colon$ & \grafcrkzA{\mxs}{1} &  & $\wh\DD^*_{p} \diamond \AA_{r-p}\colon$ & \grafcrkzDii{\mxs}{1}\\[.6cm]
                $\DD^*_{r}\colon$ & \grafcrkzDi{\mxs}{1} &  & ${}_{s}\DD^*_{p} \diamond \AA_{r-p}\colon$ & \grafcrkzDiii{0.7}{0.8}\\[-.1cm]
                \caption{One-peak positive posets of Dynkin type $\AA_{r+1}$ and $\DD_{r+1}$}\label{tbl:onepeak_posit}
\end{longtable}\tikzexternalenable}%

The aim of this manuscript is to give a full structural characterization of connected non-negative posets $I$ of Dynkin type $\Dyn_I=\AA_m$. We note that such a result is known in the case of one-peak positive and principal posets, see \cite[Theorem 5.2]{gasiorekOnepeakPosetsPositive2012} and \cite[Theorem 3.5]{gasiorekCoxeterTypeClassification2019}. In particular, we have the following.
\begin{theorem}\label{thm:onepeak_posit}
One-peak poset $I$\ of size $n$ is positive if and only if its Hasse digraph $\CH(I)$ is isomorphic~to:
\begin{enumerate}[label=\normalfont{(\alph*)}]  
        \item the digraph ${}_{p}\AA^*_{n-1}$ \textnormal{(}in this case the Dynkin type equals $\Dyn_I=\AA_{n}$\textnormal{);}
        \item one of the digraphs $\wh\DD^*_{p} \diamond \AA_{n-p-1}$, $\DD^*_{n-1}$ or ${}_{s}\DD^*_{p} \diamond \AA_{n-p-1}$ \textnormal{(}$\Dyn_I=\DD_{n}$\textnormal{);}  
        \item one of the digraphs
        $\PP_1,\ldots,\PP_{16}$  \textnormal{(}$\Dyn_I=\EE_{6}$\textnormal{)},
        $\PP_{17},\ldots,\PP_{72}$  \textnormal{(}$\Dyn_I=\EE_{7}$\textnormal{)},
        $\PP_{73},\ldots,\PP_{193}$  \textnormal{(}$\Dyn_I=\EE_{8}$\textnormal{)}         presented in \cite[Tables 6.1–6.3]{gasiorekOnepeakPosetsPositive2012}.\vspace*{-2ex}
\end{enumerate}
\end{theorem}
\section{Hanging path in a Hasse digraph}

In this section, we formalize a very useful observation  that changing the orientation of arcs on the ``hanging path'' in the Hasse digraph $\CH(I)$ does not change the definiteness nor rank of a poset $I$. Inspired by the ideas of~\cite[Prop. 16.15]{Si92} and~\cite{gasiorekOnepeakPosetsPositive2012}, we introduce the following definition.

\begin{definition}
Let $I_p\subseteq I$ be a connected subposet of a poset $I$ and $p$ be a point of $I_p$. The subposet $I_p$ is called \textit{$p$-anchored path} if:
\begin{enumerate}[label=\normalfont{(\alph*)}]  
    \item for every $a\in I_p^{(p)}$ we have $N_I(a)\subseteq I_p$ and $|N_I(a)|\in\{1,2\}$,
    \item  $|N_I(p)\cap I_p|=1$.
\end{enumerate} 
\end{definition}

The following picture illustrates the definition of a $p$-anchored path.
\begin{center}
$\ov\CH(I)=$
\tikzsetnextfilename{anchpth_1}\begin{tikzpicture}[baseline={([yshift=-2.75pt]current bounding box)},label distance=-2pt, xscale=0.48, yscale=0.4]
\coordinate (n1) at (4, 3  ) {};
\coordinate (n3) at (4, 0  ) {};
        \node[circle, draw, inner sep=0pt, minimum size=3.5pt] (n11) at (3.4  , 2.5  ) {};
        \node[circle, draw, inner sep=0pt, minimum size=3.5pt] (n2) at (3.4  , 1.9  ) {};
        \node[circle, draw, inner sep=0pt, minimum size=3.5pt] (n31) at (3.4  , 0.4  ) {};

        \node[circle, fill=black, inner sep=0pt, minimum size=3.5pt, label=below:$\scriptstyle p$] (n4) at (5.50, 1.50) {};

        \coordinate (n5) at (0, 3  ) {};
        \coordinate (n6) at (0, 0  ) {};
        \node[circle, fill=black, inner sep=0pt, minimum size=3.5pt] (n7) at (7, 1.50) {};
        \node (n8) at (8.50, 1.50) {};
        \node (n9) at (10, 1.50) {};
        \node[circle, fill=black, inner sep=0pt, minimum size=3.5pt] (n10) at (11.50, 1.50) {};
        \draw[decoration={complete sines,segment length=3mm, amplitude=1mm},decorate] (n1) -- (n5);
        \draw (n5) -- (n6);
        \draw (n1) -- (n3);
        \draw[decoration={complete sines,segment length=3mm, amplitude=1mm,mirror},decorate] (n3) -- (n6);
        \foreach \x/\y in {4/7, 7/8, 9/10}
        \draw [-, shorten <= 2.50pt, shorten >= 2.50pt] (n\x) to  (n\y);
        \foreach \x/\y in {11/4, 2/4, 31/4}
        \draw [-, shorten <= 2.50pt, shorten >= 2.50pt] (n\x) to  (n\y);
        \draw [line width=1.2pt, line cap=round, dash pattern=on 0pt off 3\pgflinewidth, -, shorten <= 2.50pt, shorten >= 2.50pt] (n2) to  (n31);
        \draw [line width=1.2pt, line cap=round, dash pattern=on 0pt off 5\pgflinewidth, -, shorten <= -2.50pt, shorten >= -2.50pt] (n8) to  (n9);
        \draw [decorate,decoration={brace,amplitude=5pt,mirror,raise=3ex}]
        (5.20, 1.50) -- (11.80, 1.50) node[midway,yshift=-6ex]{$p$-anchored path $I_p$};
        \draw [decorate,decoration={brace,amplitude=5pt,mirror,raise=2ex}]
        (11.80, 1.50) -- (6.70, 1.50) node[midway,yshift=6ex]{$\ov\CH(I^{(p)}_p)$};
\end{tikzpicture}
\end{center}

\begin{definition}
Let $I_p\subseteq I$ be a $p$-anchored path of a poset $I$. The $I_p$-reflection $\delta_{I_p} I$  is  the poset  defined  by the Hasse digraph $\CH(\delta_{I_p}I)$ obtained from $\CH(I)$ by reversing all the arcs in the subdigraph $\CH(I_p)\subseteq \CH(I)$.

\end{definition}
We call $I_p\subseteq I$  an \textit{inward} ($\mkern-2mu$\textit{outward}) $p$-anchored path if  $p\in I_p$ is a unique maximal (minimal) point in $I_p$. For example, given an outward $p$-anchored path $I_p\subseteq I$ consisting of $\{p,s_1,\dots,s_k\}$ elements, we have the following.

\begin{center}
$\CH(I)=$
\tikzsetnextfilename{anchpth_2}\begin{tikzpicture}[baseline=(n22.base),label distance=-2pt, xscale=0.44, yscale=0.4]
        \coordinate (n1) at (4, 3  ) {};
        \coordinate (n3) at (4, 0  ) {};
        \node[circle, draw, inner sep=0pt, minimum size=3.5pt] (n11) at (3.4  , 2.5  ) {};
        \node[circle, draw, inner sep=0pt, minimum size=3.5pt] (n2) at (3.4  , 1.9  ) {};
        \node[circle, draw, inner sep=0pt, minimum size=3.5pt] (n31) at (3.4  , 0.4  ) {};

        \node[circle, fill=black, inner sep=0pt, minimum size=3.5pt, label=below:$\scriptstyle p$] (n4) at (5.50, 1.50) {};
        \coordinate (n22) at (0, 1.2) {};
        \coordinate (n5) at (0, 3  ) {};
        \coordinate (n6) at (0, 0  ) {};
        \node[circle, fill=black, inner sep=0pt, minimum size=3.5pt, label=below:$\scriptstyle s_1$] (n7) at (7, 1.50) {};
        \node (n8) at (8.50, 1.50) {};
        \node (n9) at (10, 1.50) {};
        \node[circle, fill=black, inner sep=0pt, minimum size=3.5pt, label=below:$\scriptstyle s_k$] (n10) at (11.50, 1.50) {};
        \draw[decoration={complete sines,segment length=3mm, amplitude=1mm},decorate] (n1) -- (n5);
        \draw (n5) -- (n6);
        \draw (n1) -- (n3);
        \draw[decoration={complete sines,segment length=3mm, amplitude=1mm,mirror},decorate] (n3) -- (n6);
        \foreach \x/\y in {4/7, 7/8, 9/10}
        \draw [-stealth, shorten <= 2.50pt, shorten >= 2.50pt] (n\x) to  (n\y);
        \foreach \x/\y in {11/4, 2/4, 31/4}
        \draw [-, shorten <= 2.50pt, shorten >= 2.50pt] (n\x) to  (n\y);
        \draw [line width=1.2pt, line cap=round, dash pattern=on 0pt off 3\pgflinewidth, -, shorten <= 2.50pt, shorten >= 2.50pt] (n2) to  (n31);
        \draw [line width=1.2pt, line cap=round, dash pattern=on 0pt off 5\pgflinewidth, -, shorten <= -2.50pt, shorten >= -2.50pt] (n8) to  (n9);
\draw [decorate,decoration={brace,amplitude=5pt,mirror,raise=3ex}]
        (5.20, 1.50) -- (11.80, 1.50) node[midway,yshift=-6ex]{$\mathclap{\textnormal{outward }p\textnormal{-anchored path } I_p}$};
\end{tikzpicture}
\tikzsetnextfilename{pic_ar1}\begin{tikzpicture}[baseline={([yshift=-9.5pt]current bounding box)},
        label distance=-2pt,xscale=0.35, yscale=0.5]
        \coordinate (n1) at (0, 1.50);
        \coordinate (n2) at (2.50, 1.50);
        \draw [|-stealth] (n1) to  node[above=-2.0pt, pos=0.5] {$\scriptscriptstyle \delta_{I_p} I$} (n2);
\end{tikzpicture}
\tikzsetnextfilename{anchpth_3}\begin{tikzpicture}[baseline=(n22.base),label distance=-2pt, xscale=0.44, yscale=0.4]
        \coordinate (n1) at (4, 3  ) {};
        \coordinate (n3) at (4, 0  ) {};
        \node[circle, draw, inner sep=0pt, minimum size=3.5pt] (n11) at (3.4  , 2.5  ) {};
        \node[circle, draw, inner sep=0pt, minimum size=3.5pt] (n2) at (3.4  , 1.9  ) {};
        \node[circle, draw, inner sep=0pt, minimum size=3.5pt] (n31) at (3.4  , 0.4  ) {};

        \node[circle, fill=black, inner sep=0pt, minimum size=3.5pt, label=below:$\scriptstyle p$] (n4) at (5.50, 1.50) {};
        \coordinate (n22) at (0, 1.2) {};
        \coordinate (n5) at (0, 3  ) {};
        \coordinate (n6) at (0, 0  ) {};
        \node[circle, fill=black, inner sep=0pt, minimum size=3.5pt, label=below:$\scriptstyle s_1$] (n7) at (7, 1.50) {};
        \node (n8) at (8.50, 1.50) {};
        \node (n9) at (10, 1.50) {};
        \node[circle, fill=black, inner sep=0pt, minimum size=3.5pt, label=below:$\scriptstyle s_k$] (n10) at (11.50, 1.50) {};
        \draw[decoration={complete sines,segment length=3mm, amplitude=1mm},decorate] (n1) -- (n5);
        \draw (n5) -- (n6);
        \draw (n1) -- (n3);
        \draw[decoration={complete sines,segment length=3mm, amplitude=1mm,mirror},decorate] (n3) -- (n6);
        \foreach \x/\y in {4/7, 7/8, 9/10}
        \draw [stealth-, shorten <= 2.50pt, shorten >= 2.50pt] (n\x) to  (n\y);
        \foreach \x/\y in {11/4, 2/4, 31/4}
        \draw [-, shorten <= 2.50pt, shorten >= 2.50pt] (n\x) to  (n\y);
        \draw [line width=1.2pt, line cap=round, dash pattern=on 0pt off 3\pgflinewidth, -, shorten <= 2.50pt, shorten >= 2.50pt] (n2) to  (n31);
        \draw [line width=1.2pt, line cap=round, dash pattern=on 0pt off 5\pgflinewidth, -, shorten <= -2.50pt, shorten >= -2.50pt] (n8) to  (n9);
\draw [decorate,decoration={brace,amplitude=5pt,mirror,raise=3ex}]
        (5.20, 1.50) -- (11.80, 1.50) node[midway,yshift=-6ex]{$\mathclap{\textnormal{inward }p\textnormal{-anchored path } I_p^{op}}$};
\end{tikzpicture}$=\CH(\delta_{I_p} I)$
\end{center}

The $I_p$-reflection $I\mapsto\delta_{I_p} I$, a combinatorial operation defined at the digraph level, has an algebraic interpretation. We need one more definition from~\cite{simsonCoxeterGramClassificationPositive2013} to state it formally.
\begin{definition}
	Two finite posets $I$ and $J$ are called \textit{strong Gram $\ZZ$-congruent} $I\approx_\ZZ J$ if there exists such a matrix $B\in\Gl_n(\ZZ)$, that $C_J=B^{tr} C_I B.$
\end{definition}
It is straightforward to check that strong Gram $\ZZ$-congruence of posets implies a week one and the inverse implication is not true in general~\cite{gasiorekAlgorithmicStudyNonnegative2015}, although it holds in the case of one-peak positive~\cite{gasiorekOnepeakPosetsPositive2012} and principal~\cite{gasiorekCoxeterTypeClassification2019} posets. The reader is referred to~\cite{gasiorekCongruenceRationalMatrices2023} for a further discussion on $\ZZ$-congruence and its applications.\smallskip

\begin{lemma}\label{lemma:preflection}
If $I_p\subseteq I$ is an inward or outward $p$-anchored path, then $I\approx_\ZZ \delta_{I_p} I$. In par\-ticular, $I$ is non-negative of rank $m$ if and only if  $\delta_{I_p} I$ is non-negative of rank~$m$.
\end{lemma}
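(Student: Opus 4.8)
The plan is to exhibit an explicit matrix $B\in\Gl(n,\ZZ)$ realising $C_J = B^{tr}\cdot C_{J'}\cdot B$ for $J'\eqdef\delta_{J_p}J$, which gives $J\approx_\ZZ J'$; the ``in particular'' clause is then immediate, since strong Gram $\ZZ$-congruence implies weak Gram $\ZZ$-congruence, and the latter preserves positive semi-definiteness of the symmetric Gram matrix together with its corank.

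Since $\delta_{J_p}$ is an involution that interchanges the inward and the outward case, it suffices to treat the outward one, so assume $p$ is the unique minimal point of $J_p$. First I would fix notation and record the shape of the two incidence matrices. Because $\ov{\CH(J_p)}$ is a path, $\deg(p)=1$ in $\CH(J_p)$, and $p$ is the only minimal point of $J_p$, the subposet $J_p$ is forced to be the chain $p\prec s_1\prec\dots\prec s_r$; label the $s_i$ accordingly and put $\wt R\eqdef\{1,\dots,n\}\setminus\{s_1,\dots,s_r\}$ (so $p\in\wt R$). Ordering the points of $J$ with $\wt R$ first and $s_1,\dots,s_r$ last, I would show
\[
C_J=\begin{bmatrix} C_{\wt R} & c\,\mathbf 1^{tr}\\ 0 & U\end{bmatrix},\qquad
C_{J'}=\begin{bmatrix} C_{\wt R} & 0\\ \mathbf 1\,d^{tr} & U^{tr}\end{bmatrix},
\]
where $U\in\MM_r(\ZZ)$ is the upper-triangular all-ones matrix, $\mathbf 1=[1,\dots,1]^{tr}\in\ZZ^r$, and $c$ (resp.\ $d^{tr}$) is the column (resp.\ row) of $C_{\wt R}$ indexed by $p$. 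The substantive point here is that every comparability of $J$ between a point of $\{s_1,\dots,s_r\}$ and a point outside $J_p$ ``factors through $p$'': for each $x\in\wt R$ one has $x\preceq_J s_i\iff x\preceq_J p$ and $s_i\npreceq_J x$, and in $J'$ symmetrically $s_i\preceq_{J'}x\iff p\preceq_{J'}x$ and $x\npreceq_{J'}s_i$. This is precisely where the hypothesis that $\CH(J)\setminus\{p\}$ is disconnected is used: via the description of $\preceq_J$ by saturated (cover) chains, any chain joining a point of $\{s_1,\dots,s_r\}$ to a point outside $J_p$ must pass through $p$. I expect making this reduction fully rigorous to be the main obstacle; the remainder is routine linear algebra.

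With the block forms at hand I would take
\[
B=\begin{bmatrix} \mathrm{Id} & e_p\,\mathbf 1^{tr}\\ 0 & -\Pi\end{bmatrix}\in\MM_n(\ZZ),
\]
where $e_p\in\ZZ^{\wt R}$ is the standard basis vector at $p$ and $\Pi\in\MM_r(\ZZ)$ is the anti-diagonal (order-reversing) permutation matrix; equivalently $B$ fixes every point of $\wt R$ and sends $e_{s_i}$ to $e_p-e_{s_{r+1-i}}$. Then $\det B=\pm1$, so $B\in\Gl(n,\ZZ)$, and a direct block multiplication --- using $e_p^{tr}C_{\wt R}=d^{tr}$, $C_{\wt R}e_p=c$, $e_p^{tr}c=1=d^{tr}e_p$, $\Pi\mathbf 1=\mathbf 1$, and the identity $\Pi\,U^{tr}\,\Pi=U$ (conjugating a lower-triangular all-ones matrix by the reversal permutation yields the upper-triangular one) --- gives $B^{tr}\,C_{J'}\,B=C_J$, as required.

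Finally, $J\approx_\ZZ J'$ implies $J\sim_\ZZ J'$, and weak Gram $\ZZ$-congruence preserves both the positive semi-definiteness and the corank of the symmetric Gram matrix; hence $J$ is non-negative of corank $r\geq0$ if and only if $J'=\delta_{J_p}J$ is.
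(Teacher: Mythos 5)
Your proposal is correct and follows essentially the same route as the paper: the paper likewise reduces to comparing the inward and outward forms, writes the two incidence matrices in exactly the block shapes you describe (with the off-diagonal blocks $c\,\mathbf 1^{tr}$ and $\mathbf 1\,d^{tr}$ coming from the fact that comparabilities between $\{s_1,\dots,s_r\}$ and the rest factor through $p$), and conjugates by the very same matrix $B$ (identity on $J\setminus J_p$, the row of $p$ filled with $1$'s over the path, and a negated reversal permutation on the path block). Your block-matrix verification, including the identities $\Pi\mathbf 1=\mathbf 1$ and $\Pi U^{tr}\Pi=U$, matches the paper's "straightforward to check" computation, and the deduction of the corank statement from strong implying weak Gram $\ZZ$-congruence is the same.
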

\begin{proof}
First, we note that the strong Gram $\ZZ$-congruence of posets implies a weak Gram $\ZZ$-congruence. Since congruent matrices have the same definiteness and rank, it suffices to show that $I\approx_\ZZ \delta_{I_p} I$.\smallskip

Let $I_p\subseteq I$ be an inward $p$-anchored path of a poset $I$ and let $J\eqdef\delta_{I_p} I$. Then:
\begin{itemize}
\item $J_p\eqdef I_p^{op}$ is an outward $p$-anchored path in $J$,
\item $I\setminus I^{(p)}_p = J\setminus J^{(p)}_p$. 
 \end{itemize}
 Without loss of generality, we may assume that Hasse digraphs  $\CH(I)$ and $\CH(J)$ have the forms
\begin{center}
$\CH(I)=$
\tikzsetnextfilename{anchpth_4}\begin{tikzpicture}[baseline=(n22.base),label distance=-2pt,xscale=0.48, yscale=0.4]
    \coordinate (n1) at (4, 3  ) {};
    \coordinate (n3) at (4, 0  ) {};
     \coordinate (n22) at (0, 1.2) {};
    \coordinate (n113) at (5.9, 1.5  ) {};
    \node[circle, draw, inner sep=0pt, minimum size=3.5pt] (n11) at (3.4  , 2.5  ) {};
    \node[circle, draw, inner sep=0pt, minimum size=3.5pt] (n2) at (3.4  , 1.9  ) {};
    \node[circle, draw, inner sep=0pt, minimum size=3.5pt] (n31) at (3.4  , 0.4  ) {};
    \node[circle, fill=black, inner sep=0pt, minimum size=3.5pt, label=below:$\scriptstyle \phantom{+}\!\!\!p$] (n4) at (5.50, 1.50) {};
    \coordinate (n5) at (0, 3  ) {};
    \coordinate (n6) at (0, 0  ) {};
    \node[circle, fill=black, inner sep=0pt, minimum size=3.5pt, label=below:$\scriptstyle p+1$] (n7) at (7, 1.50) {};
    \node (n8) at (8.50, 1.50) {};
    \node (n9) at (10, 1.50) {};
    \node[circle, fill=black, inner sep=0pt, minimum size=3.5pt, label={below:$\scriptstyle p+k-1=n$}] (n10) at (11.50, 1.50) {};
    \draw[decoration={complete sines,segment length=3mm, amplitude=1mm},decorate] (n1) -- (n5);
    \draw (n5) -- (n6);
    \draw[rounded corners] (n1) -- (n113) -- (n3);
    \draw[decoration={complete sines,segment length=3mm, amplitude=1mm},decorate] (n6) -- (n3);
    \path (n5) to  node[pos=0.5,xshift=-1ex] {{$I\setminus I^{(p)}_p$}} (n3);
    \foreach \x/\y in {4/7, 7/8, 9/10}
    \draw [stealth-, shorten <= 2.50pt, shorten >= 2.50pt] (n\x) to  (n\y);
    \foreach \x/\y in {11/4, 2/4, 31/4}
    \draw [-, shorten <= 2.50pt, shorten >= 2.50pt] (n\x) to  (n\y);
    \draw [line width=1.2pt, line cap=round, dash pattern=on 0pt off 3\pgflinewidth, -, shorten <= 2.50pt, shorten >= 2.50pt] (n2) to  (n31);
    \draw [line width=1.2pt, line cap=round, dash pattern=on 0pt off 5\pgflinewidth, -, shorten <= -2.50pt, shorten >= -2.50pt] (n8) to  (n9);
	\draw [decorate,decoration={brace,amplitude=5pt,mirror,raise=3ex}]
    (5.20, 1.50) -- (11.80, 1.50) node[midway,yshift=-6ex]{$I_p$};
\end{tikzpicture}
$\CH(J)=$
\tikzsetnextfilename{anchpth_5}\begin{tikzpicture}[baseline=(n22.base),label distance=-2pt,xscale=0.48, yscale=0.4]
    \coordinate (n1) at (4, 3  ) {};
    \coordinate (n3) at (4, 0  ) {};
     \coordinate (n22) at (0, 1.2) {};
    \coordinate (n113) at (5.9, 1.5  ) {};
    \node[circle, draw, inner sep=0pt, minimum size=3.5pt] (n11) at (3.4  , 2.5  ) {};
    \node[circle, draw, inner sep=0pt, minimum size=3.5pt] (n2) at (3.4  , 1.9  ) {};
    \node[circle, draw, inner sep=0pt, minimum size=3.5pt] (n31) at (3.4  , 0.4  ) {};
    \node[circle, fill=black, inner sep=0pt, minimum size=3.5pt, label=below:$\scriptstyle \phantom{+}\!\!\!p$] (n4) at (5.50, 1.50) {};
    \coordinate (n5) at (0, 3  ) {};
    \coordinate (n6) at (0, 0  ) {};
    \node[circle, fill=black, inner sep=0pt, minimum size=3.5pt, label=below:$\scriptstyle p+1$] (n7) at (7, 1.50) {};
    \node (n8) at (8.50, 1.50) {};
    \node (n9) at (10, 1.50) {};
    \node[circle, fill=black, inner sep=0pt, minimum size=3.5pt, label={below:$\scriptstyle p+k-1=n$}] (n10) at (11.50, 1.50) {};
    \draw[decoration={complete sines,segment length=3mm, amplitude=1mm},decorate] (n1) -- (n5);
    \draw (n5) -- (n6);
    \draw[rounded corners] (n1) -- (n113) -- (n3);
    \draw[decoration={complete sines,segment length=3mm, amplitude=1mm},decorate] (n6) -- (n3);
    \path (n5) to  node[pos=0.5,xshift=-1ex] {{$J\setminus J^{(p)}_p$}} (n3);
    \foreach \x/\y in {4/7, 7/8, 9/10}
    \draw [-stealth, shorten <= 2.50pt, shorten >= 2.50pt] (n\x) to  (n\y);
    \foreach \x/\y in {11/4, 2/4, 31/4}
    \draw [-, shorten <= 2.50pt, shorten >= 2.50pt] (n\x) to  (n\y);
    \draw [line width=1.2pt, line cap=round, dash pattern=on 0pt off 3\pgflinewidth, -, shorten <= 2.50pt, shorten >= 2.50pt] (n2) to  (n31);
    \draw [line width=1.2pt, line cap=round, dash pattern=on 0pt off 5\pgflinewidth, -, shorten <= -2.50pt, shorten >= -2.50pt] (n8) to  (n9);
	\draw [decorate,decoration={brace,amplitude=5pt,mirror,raise=3ex}]
    (5.20, 1.50) -- (11.80, 1.50) node[midway,yshift=-6ex]{$J_p=I_p^{op}$};
\end{tikzpicture}
\end{center}
and the incidence matrices of posets $I$ and $J$ are as follows:
\begin{center}
$C_{I}=$
\tikzsetnextfilename{matrix_1}\begin{tikzpicture} [baseline=(n6.base), every node/.style={outer sep=0pt,inner sep=0pt},every left delimiter/.style={xshift=1pt}, every right delimiter/.style={xshift=-1pt}]
    \matrix (m1) [matrix of nodes, ampersand replacement=\&, nodes={minimum height=1.6em,minimum width=1.6em,text depth=-.25ex,text height=0.6ex,inner xsep=0.0pt,inner ysep=0.0pt, execute at begin node=$\scriptscriptstyle, execute at end node=$}, column sep={0pt,between borders},row sep={0pt,between borders}, left delimiter=\lbrack, right delimiter=\rbrack]
    {
            |(n1)|          \&         \& |(n22)|                  \& |(n3)| c_{1,p}         \& |(n14)|   \&          \& |(n23)|  \\
            \& |(n8)|  \&                          \&                        \&           \&          \&          \\
            \&         \& |(n4)|                   \& |(n2)| c_{p\mppms 1,p} \&           \&          \& |(n24)|  \\
            |(n6)| c_{p,1}  \&         \& |(n5)| c_{p, p\mppms 1}  \& |(n7)| 1               \& |(n16)| 0 \&          \& |(n15)| 0\\
            |(n19)| c_{p,1} \&         \& |(n18)| c_{p, p\mppms 1} \& |(n17)| 1              \& |(n9)| 1  \&          \& |(n25)|  \\
            \&         \&                          \&                        \&           \& |(n10)|  \&          \\
            |(n21)| c_{p,1} \&         \& |(n20)| c_{p, p\mppms 1} \& |(n13)| 1              \& |(n12)| 1 \&          \& |(n11)| 1\\
    };
    \path (n1) to  node[pos=0.5, scale=1.5] {{$\scriptstyle C_{I\setminus I_p}$}} (n4);
    \draw[-] ([xshift=-2.4pt]n3.north west) to ([xshift=-2.4pt]n13.south west);
    \draw[-] ([xshift=2.4pt]n13.south east) to ([xshift=2.4pt]n3.north east);
    \draw[-] (n6.south west) to (n15.south east);
    \draw[-] (n15.north east) to (n6.north west);
    \path (n3.north east) to  node[pos=0.5, scale=2] {{\scriptsize $0$}} (n15.north east);
    \path (n12.south west) to  node[pos=0.7, scale=1.5] {{\scriptsize $0$}} (n15.south east);
    \foreach \x/\y in {6/5, 9/11, 12/11, 16/15, 19/18, 21/20}
    \draw[line width=1.2pt, line cap=round, dash pattern=on 0pt off 5\pgflinewidth] (n\x) to (n\y);
    \foreach \x/\y in {9/12, 17/13}
    \draw[line width=1.2pt, line cap=round, dash pattern=on 0pt off 5\pgflinewidth, shorten <= 2pt] (n\x) to (n\y);
    \foreach \x/\y in {3/2, 18/20, 19/21}
    \draw[line width=1.2pt, line cap=round, dash pattern=on 0pt off 5\pgflinewidth, shorten <= 3pt] (n\x) to (n\y);
    \foreach \i/\j in {1/1, 22/p-1, 3/\phantom{+}p\phantom{+}, 14/p+1, 23/n}
    \node[gray, scale=0.7] at ([yshift=2mm]n\i.north) {$\scriptstyle \j$};
    \foreach \i/\j in {23/1, 24/p-1, 15/p, 25/p+1, 11/n}
    \node[gray, scale=0.7,text width=1.5em,anchor=west] at ([xshift=3mm]n\i.east) {$\scriptstyle \j$};
\end{tikzpicture},\quad
$C_{J}=$
\tikzsetnextfilename{matrix_2}\begin{tikzpicture} [baseline=(n6.base),  every node/.style={outer sep=0pt,inner sep=0pt},every left delimiter/.style={xshift=1pt}, every right delimiter/.style={xshift=-1pt}]
    \matrix (m1) [matrix of nodes, ampersand replacement=\&, nodes={minimum height=1.6em,minimum width=1.6em,text depth=-.25ex,text height=0.6ex,inner xsep=0.0pt,inner ysep=0.0pt, execute at begin node=$\scriptscriptstyle, execute at end node=$},column sep={0pt,between borders}, row sep={0pt,between borders}, left delimiter=\lbrack, right delimiter=\rbrack]
    {
            |(n1)|         \&         \& |(n21)|                 \& |(n3)| c_{1,p}         \& |(n18)| c_{1,p}         \&          \& |(n17)| c_{1,p}        \\
            \& |(n8)|  \&                         \&                        \&                         \&          \&                        \\
            \&         \& |(n4)|                  \& |(n2)| c_{p\mppms 1,p} \& |(n20)| c_{p\mppms 1,p} \&          \& |(n19)| c_{p\mppms 1,p}\\
            |(n6)| c_{p,1} \&         \& |(n5)| c_{p, p\mppms 1} \& |(n7)| 1               \& |(n15)| 1               \&          \& |(n14)| 1              \\
            \&         \&                         \& |(n16)| 0              \& |(n9)| 1                \&          \& |(n12)| 1              \\
            \&         \&                         \&                        \&                         \& |(n10)|  \&                        \\
            \&         \&                         \& |(n13)| 0              \&                         \&          \& |(n11)| 1              \\
    };
    \path (n1) to  node[pos=0.5, scale=1.5] {{$\scriptstyle C_{J\setminus J_p}$}} (n4);
    \draw[-] ([xshift=-2.4pt]n3.north west) to ([xshift=-2.4pt]n13.south west);
    \draw[-] ([xshift=2.4pt]n13.south east) to ([xshift=2.4pt]n3.north east);
    \draw[-] (n6.south west) to (n14.south east);
    \draw[-] (n14.north east) to (n6.north west);
    \path (n6.south west) to  node[pos=0.5, scale=2] {{\scriptsize $0$}} (n13.south west);
    \path (n13.south east) to  node[pos=0.30000000000000004, scale=1.5] {{\scriptsize $0$}} (n12.north east);
    \foreach \x/\y in {6/5, 9/11, 12/11, 15/14, 18/17, 20/19}
    \draw[line width=1.2pt, line cap=round, dash pattern=on 0pt off 5\pgflinewidth] (n\x) to (n\y);
    \foreach \x/\y in {9/12, 16/13}
    \draw[line width=1.2pt, line cap=round, dash pattern=on 0pt off 5\pgflinewidth, shorten <= 2pt] (n\x) to (n\y);
    \foreach \x/\y in {3/2, 17/19, 18/20}
    \draw[line width=1.2pt, line cap=round, dash pattern=on 0pt off 5\pgflinewidth, shorten <= 3pt] (n\x) to (n\y);
    \foreach \i/\j in {1/1, 21/p-1, 3/\phantom{+}p\phantom{+}, 18/p+1, 17/n}
    \node[gray, scale=0.7] at ([yshift=2mm]n\i.north) {$\scriptstyle \j$};
    \foreach \i/\j in {17/1, 19/p-1, 14/p, 12/p+1, 11/n}
    \node[gray, scale=0.7,text width=1.5em,anchor=west] at ([xshift=3mm]n\i.east) {$\scriptstyle \j$};
\end{tikzpicture}.
\end{center}
It is straightforward to check that $C_{J} = B^{tr} C_{I} B$, where 
\begin{equation}\label{lemma:preflection:bmat}
B\eqdef\!\!
\tikzsetnextfilename{matrix_3}\begin{tikzpicture} [baseline=(n6.base),  every node/.style={outer sep=0pt,inner sep=0pt},every left delimiter/.style={xshift=1pt}, every right delimiter/.style={xshift=-1pt}]
        \matrix (m1) [matrix of nodes, ampersand replacement=\&, nodes={minimum height=1.2em,minimum width=1.2em,text depth=-.25ex,text height=0.6ex,inner xsep=0.0pt,inner ysep=0.0pt, execute at begin node=$\scriptscriptstyle, execute at end node=$},column sep={0pt,between borders}, row sep={0pt,between borders}, left delimiter=\lbrack, right delimiter=\rbrack]
        {
                |(n1)| 1 \&         \& |(n17)|  \& |(n3)| 0  \& |(n13)|   \&          \& |(n18)|  \\
                \& |(n8)|  \&          \&           \&           \&          \&          \\
                |(n21)|       \&         \& |(n4)| 1 \& |(n2)| 0  \&           \&          \& |(n20)|  \\
                |(n6)| 0 \&         \& |(n5)| 0 \& |(n7)| 1  \& |(n15)| \phantom{-}1 \&          \& |(n14)| \phantom{-}1\\
                \&         \&          \& |(n16)| 0 \&           \&          \& |(n9)| -1 \\
                \&         \&          \&           \&           \& |(n10)|  \&          \\
                \&         \&          \& |(n12)| 0 \& |(n11)| -1 \&          \& |(n19)|  \\
        };
        \draw[-] (n3.north west) to (n12.south west);
        \draw[-] (n12.south east) to (n3.north east);
        \draw[-] (n6.south west) to (n14.south east);
        \draw[-] (n14.north east) to (n6.north west);
        \draw[line width=1.2pt, line cap=round, dash pattern=on 0pt off 5\pgflinewidth, shorten <= 2pt, shorten >= -3pt] (n15) to (n14);
        \path (n3.north east) to  node[pos=0.5, scale=2] {{\scriptsize $0$}} (n14.north east);
        \path (n6.south west) to  node[pos=0.5, scale=2] {{\scriptsize $0$}} (n12.south west);
        \path (n21.south west) to  node[pos=0.7, scale=1.2] {{\scriptsize $0$}} (n17.north east);
        \path (n17.north east) to  node[pos=0.7, scale=1.2] {{\scriptsize $0$}} (n21.south west);
        \foreach \x/\y in {1/4, 9/11}
        \draw[line width=1.2pt, line cap=round, dash pattern=on 0pt off 5\pgflinewidth, shorten <= 2pt] (n\x) to (n\y);
        \foreach \x/\y in {3/2, 6/5, 16/12}
        \draw[line width=1.2pt, line cap=round, dash pattern=on 0pt off 5\pgflinewidth, shorten <= -1pt, shorten >= -2pt] (n\x) to (n\y);
        \foreach \x/\y in {7/19, 19/7}
        \path (n\x.south east) to  node[pos=0.7, scale=1.2] {{\scriptsize $0$}} (n\y.south east);
        \foreach \i/\j in {1/1, 17/p-1, 3/\phantom{+}p\phantom{+}, 13/p+1, 18/n}
        \node[gray, scale=0.7] at ([yshift=2mm]n\i.north) {$\scriptstyle \j$};
        \foreach \i/\j in {18/1, 20/p-1, 14/p, 9/p+1, 19/n}
        \node[gray, scale=0.7,text width=1.5em,anchor=west] at ([xshift=3mm]n\i.east) {$\scriptstyle \j$};
\end{tikzpicture}\in\Gl_n(\ZZ)
\end{equation}
is an involutory matrix. It follows that posets $I$ and $J$ are strong Gram $\ZZ$-congruent and 
\[
\delta_{J_p}J=I\approx_\ZZ J=\delta_{I_p}I.
\]  
Now, assume $I_p\subseteq I$ be an outward $p$-anchored path of a poset $I$. By using arguments analogous to the previous case, one easily shows that  the matrix $B$ defines the strong Gram $\ZZ$-congruence $I\approx_\ZZ \delta_{I_p}I$.
\end{proof}
\begin{remark}\label{rmk:arbitrary_path_refl}
The matrix $B\in\Gl_n(\ZZ)$, given in \eqref{lemma:preflection:bmat}, defines the strong Gram $\ZZ$-congruence $I\approx_\ZZ \delta_{I_p}I$, with the assumption that  $I_p\subseteq I$ is such an inward or outward $p$-anchored path, that $I_p=p\,\rule[1.5pt]{22pt}{0.4pt}\, p\!+\!1\,\rule[1.5pt]{22pt}{0.4pt} \ldots\rule[1.5pt]{22pt}{0.4pt}\, n$. Assume now that $I_p$ is numbered arbitrarily, i.e., $I_p=p\,\rule[1.5pt]{22pt}{0.4pt}\, s_1\,\rule[1.5pt]{22pt}{0.4pt} \ldots\rule[1.5pt]{22pt}{0.4pt}\, s_k$, where $k=|I_p|-1$, and consider the permutation $\pi\colon \{s_1,\ldots, s_k\}\to\{s_1,\ldots, s_k\}$ with $\pi(s_i)\eqdef s_{k-i+1}$. One checks that for $B_{I_p}\in\Gl_n(\ZZ)$ composed of columns $b_1,\ldots, b_n$ with 
\[
b_j=\begin{cases}
e_p-e_{\pi(s_i)}, & \textnormal{if }j=s_i\in\{s_1, \ldots, s_k\},\\
e_j, &\textnormal{otherwise},\\
\end{cases}
\]
where $e_i$ is the $i$-th standard basis vector in $\ZZ^n$, we have $C_{\delta I_p I} = B_{I_p}^{tr} C_I B_{I_p}$, that is, the matrix $B_{I_p}$ defines the strong Gram $\ZZ$-congruence $I\approx_\ZZ \delta_{I_p}I$.
\end{remark}

Interchanging inward $p$-anchored paths with the outward ones (an operation defined at the Hasse digraph level) yields a strong Gram $\ZZ$-congruence of posets (defined at the level of incidence matrices). It is easy to generalize this fact to all, not necessarily inward/outward, $p$-anchored paths. 

\begin{corollary}\label{corr:ahchoredpath}
Let  $I_p\subseteq I$ be an arbitrary $p$-anchored path of a poset $I$. 
\begin{enumerate}[label=\normalfont{(\alph*)}]
   \item\label{corr:ahchoredpath:bilinear:out} $I\approx_\ZZ J$, where $J$ is a poset with such an outward $p$-anchored path that $I\setminus I^{(p)}_p = J\setminus J^{(p)}_p$ and $\ov\CH(J_p)=\ov\CH(I_p)$.
\begin{center}
$\CH(I)=$
\tikzsetnextfilename{apth_1}\begin{tikzpicture}[baseline=(n22.base),label distance=-2pt, xscale=0.44, yscale=0.4]
    \coordinate (n1) at (4, 3  ) {};
    \coordinate (n3) at (4, 0  ) {};
    \node[circle, draw, inner sep=0pt, minimum size=3.5pt] (n11) at (3.4  , 2.5  ) {};
    \node[circle, draw, inner sep=0pt, minimum size=3.5pt] (n2) at (3.4  , 1.9  ) {};
    \node[circle, draw, inner sep=0pt, minimum size=3.5pt] (n31) at (3.4  , 0.4  ) {};

    \node[circle, fill=black, inner sep=0pt, minimum size=3.5pt, label=below:$\scriptstyle p$] (n4) at (5.50, 1.50) {};
    \coordinate (n22) at (0, 1.2) {};
    \coordinate (n5) at (0.5, 3  ) {};
    \coordinate (n6) at (0.5, 0  ) {};
    \node[circle, fill=black, inner sep=0pt, minimum size=3.5pt, label=below:$\scriptstyle s_1$] (n7) at (7, 1.50) {};
    \node (n8) at (8.50, 1.50) {};
    \node (n9) at (10, 1.50) {};
    \node[circle, fill=black, inner sep=0pt, minimum size=3.5pt, label=below:$\scriptstyle s_k$] (n10) at (11.50, 1.50) {};
    \draw[decoration={complete sines,segment length=3mm, amplitude=1mm},decorate] (n1) -- (n5);
    \draw (n5) -- (n6);
    \draw (n1) -- (n3);
    \draw[decoration={complete sines,segment length=3mm, amplitude=1mm,mirror},decorate] (n3) -- (n6);
    \foreach \x/\y in {4/7, 7/8, 9/10}
    \draw [-, shorten <= 2.50pt, shorten >= 2.50pt] (n\x) to  (n\y);
    \foreach \x/\y in {11/4, 2/4, 31/4}
    \draw [-, shorten <= 2.50pt, shorten >= 2.50pt] (n\x) to  (n\y);
    \draw [line width=1.2pt, line cap=round, dash pattern=on 0pt off 3\pgflinewidth, -, shorten <= 2.50pt, shorten >= 2.50pt] (n2) to  (n31);
    \draw [line width=1.2pt, line cap=round, dash pattern=on 0pt off 5\pgflinewidth, -, shorten <= -2.50pt, shorten >= -2.50pt] (n8) to  (n9);
\draw [decorate,decoration={brace,amplitude=5pt,mirror,raise=3ex}]
    (5.20, 1.50) -- (11.80, 1.50) node[midway,yshift=-6ex]{$p$-anchored path $I_p$};
\draw [decorate,decoration={brace,amplitude=5pt,mirror,raise=2ex}]
    (11.80, 1.50) -- (5.20, 1.50) node[midway,yshift=6ex]{arbitrary orientation};
\end{tikzpicture}
\hfill $\CH(J)=$
\tikzsetnextfilename{apth_2}\begin{tikzpicture}[baseline=(n22.base),label distance=-2pt, xscale=0.44, yscale=0.4]
    \coordinate (n1) at (4, 3  ) {};
    \coordinate (n3) at (4, 0  ) {};
    \node[circle, draw, inner sep=0pt, minimum size=3.5pt] (n11) at (3.4  , 2.5  ) {};
    \node[circle, draw, inner sep=0pt, minimum size=3.5pt] (n2) at (3.4  , 1.9  ) {};
    \node[circle, draw, inner sep=0pt, minimum size=3.5pt] (n31) at (3.4  , 0.4  ) {};

    \node[circle, fill=black, inner sep=0pt, minimum size=3.5pt, label=below:$\scriptstyle p$] (n4) at (5.50, 1.50) {};
    \coordinate (n22) at (0, 1.2) {};
    \coordinate (n5) at (0.5, 3  ) {};
    \coordinate (n6) at (0.5, 0  ) {};
    \node[circle, fill=black, inner sep=0pt, minimum size=3.5pt, label=below:$\scriptstyle s_1$] (n7) at (7, 1.50) {};
    \node (n8) at (8.50, 1.50) {};
    \node (n9) at (10, 1.50) {};
    \node[circle, fill=black, inner sep=0pt, minimum size=3.5pt, label=below:$\scriptstyle s_k$] (n10) at (11.50, 1.50) {};
    \draw[decoration={complete sines,segment length=3mm, amplitude=1mm},decorate] (n1) -- (n5);
    \draw (n5) -- (n6);
    \draw (n1) -- (n3);
    \draw[decoration={complete sines,segment length=3mm, amplitude=1mm,mirror},decorate] (n3) -- (n6);
    \foreach \x/\y in {4/7, 7/8, 9/10}
    \draw [-stealth, shorten <= 2.50pt, shorten >= 2.50pt] (n\x) to  (n\y);
    \foreach \x/\y in {11/4, 2/4, 31/4}
    \draw [-, shorten <= 2.50pt, shorten >= 2.50pt] (n\x) to  (n\y);
    \draw [line width=1.2pt, line cap=round, dash pattern=on 0pt off 3\pgflinewidth, -, shorten <= 2.50pt, shorten >= 2.50pt] (n2) to  (n31);
    \draw [line width=1.2pt, line cap=round, dash pattern=on 0pt off 5\pgflinewidth, -, shorten <= -2.50pt, shorten >= -2.50pt] (n8) to  (n9);
	\draw [decorate,decoration={brace,amplitude=5pt,mirror,raise=3ex}]
    (5.20, 1.50) -- (11.80, 1.50) node[midway,yshift=-6ex]{outward $p$-anchored path $J_p$};
\end{tikzpicture}
\end{center}

    \item\label{corr:ahchoredpath:anyorient} For every orientation of arcs in $\CH(I_p)\subseteq\CH(I)$, the resulting poset $\wt I$  is non-negative of rank $m$ if and only if  $I$ is non-negative of rank~$m$.
\end{enumerate}
\end{corollary}
\begin{proof}
It is easy to see that for every orientation of edges of the path graph $\ov\CH(I_{p})$, there exists a series of $I'_{p'}$-reflections, where $p'\in I'_{p'}\subseteq I_p$, that carries the $p$-anchored path $I_p$ into outward $p$-anchored path $J_p$, therefore~\ref{corr:ahchoredpath:bilinear:out} follows by Lemma~\ref{lemma:preflection}.

Since~\ref{corr:ahchoredpath:anyorient} follows from~\ref{corr:ahchoredpath:bilinear:out}, the proof is finished.
\end{proof}

Summing up,  changing the orientation of arcs in a $p$-anchored path does not change the non-negativity nor the rank.

\begin{example}
Consider the following triple of posets: $I$, $J$ and $J'$.
\begin{center}
        \hfill
        $\CH(I)=$
        \tikzsetnextfilename{ex16_1}\begin{tikzpicture}[baseline=(n7.base),label distance=-2pt,xscale=0.65, yscale=0.74]
                \node[circle, fill=black, inner sep=0pt, minimum size=3.5pt, label=below:$\scriptscriptstyle 1$] (n1) at (4, 0  ) {};
                \node[circle, fill=black, inner sep=0pt, minimum size=3.5pt, label=below:$\scriptscriptstyle 2$] (n2) at (1, 0  ) {};
                \node[circle, fill=black, inner sep=0pt, minimum size=3.5pt, label=below:$\scriptscriptstyle 3$] (n3) at (3, 0  ) {};
                \node[circle, fill=black, inner sep=0pt, minimum size=3.5pt, label=below:$\scriptscriptstyle 4$] (n4) at (0, 0  ) {};
                \node[circle, fill=black, inner sep=0pt, minimum size=3.5pt, label=below:$\scriptscriptstyle 5$] (n5) at (2, 0  ) {};
                \node (n7) at (3, 0  ) {$\mathclap{\phantom{7}}$};
                \foreach \x/\y in {1/3, 4/2, 5/2, 5/3}
                \draw [-stealth, shorten <= 2.50pt, shorten >= 2.50pt] (n\x) to  (n\y);
        \end{tikzpicture}
        \hfill
        $\CH(J)=$
        \tikzsetnextfilename{ex16_2}\begin{tikzpicture}[baseline=(n7.base),label distance=-2pt,xscale=0.65, yscale=0.74]
                \node[circle, fill=black, inner sep=0pt, minimum size=3.5pt, label=below:$\scriptscriptstyle 1$] (n1) at (4, 0  ) {};
                \node[circle, fill=black, inner sep=0pt, minimum size=3.5pt, label=below:$\scriptscriptstyle 2$] (n2) at (1, 0  ) {};
                \node[circle, fill=black, inner sep=0pt, minimum size=3.5pt, label=below:$\scriptscriptstyle 3$] (n3) at (3, 0  ) {};
                \node[circle, fill=black, inner sep=0pt, minimum size=3.5pt, label=below:$\scriptscriptstyle 4$] (n4) at (0, 0  ) {};
                \node[circle, fill=black, inner sep=0pt, minimum size=3.5pt, label=below:$\scriptscriptstyle 5$] (n5) at (2, 0  ) {};
                \node (n7) at (3, 0  ) {$\mathclap{\phantom{7}}$};
                \foreach \x/\y in {2/5, 3/1, 4/2, 5/3}
                \draw [-stealth, shorten <= 2.50pt, shorten >= 2.50pt] (n\x) to  (n\y);
        \end{tikzpicture}
        \hfill
        $\CH(J')=$
        \tikzsetnextfilename{ex16_3}\begin{tikzpicture}[baseline=(n7.base),label distance=-2pt,xscale=0.65, yscale=0.74]
                \node[circle, fill=black, inner sep=0pt, minimum size=3.5pt, label=below:$\scriptscriptstyle 1$] (n1) at (4, 0  ) {};
                \node[circle, fill=black, inner sep=0pt, minimum size=3.5pt, label=below:$\scriptscriptstyle 2$] (n2) at (1, 0  ) {};
                \node[circle, fill=black, inner sep=0pt, minimum size=3.5pt, label=below:$\scriptscriptstyle 3$] (n3) at (3, 0  ) {};
                \node[circle, fill=black, inner sep=0pt, minimum size=3.5pt, label=below:$\scriptscriptstyle 4$] (n4) at (0, 0  ) {};
                \node[circle, fill=black, inner sep=0pt, minimum size=3.5pt, label=below:$\scriptscriptstyle 5$] (n5) at (2, 0  ) {};
                \node (n7) at (3, 0  ) {$\mathclap{\phantom{7}}$};
                \foreach \x/\y in {1/3, 2/4, 3/5, 5/2}
                \draw [-stealth, shorten <= 2.50pt, shorten >= 2.50pt] (n\x) to  (n\y);
        \end{tikzpicture}
        \hfill
        \mbox{}
\end{center}
We have $I\approx_\ZZ J$ and $I\approx_\ZZ J'$, since 
$J\! =\! \delta_{\{2, 5,3,1\}} \delta_{\{5,3,1\}} \delta_{\{3,1\}} I$ and $J'\! =\! \delta_{\{3, 5, 2, 4\}} \delta_{\{5, 2, 4\}} \delta_{\{2, 4\}} I$. Moreover, using the description given in \Cref{lemma:preflection} and Remark~\ref{rmk:arbitrary_path_refl} we get the equality $C_{J}=B_1^{tr}C_IB_1$, where
\begin{equation*}
        B_1=
        \begin{bsmallmatrix*}[r]
                \shortminus 1 & \phantom{\shortminus}0 & \phantom{\shortminus}0 & \phantom{\shortminus}0 & \phantom{\shortminus}0\\
0 & 1 & 0 & 0 & 0\\
1 & 0 & 1 & 0 & 0\\
0 & 0 & 0 & 1 & 0\\
0 & 0 & 0 & 0 & 1
        \end{bsmallmatrix*}
        \begin{bsmallmatrix*}[r]
                0 & \phantom{\shortminus}0 & \shortminus 1 &\phantom{\shortminus} 0 &\phantom{\shortminus} 0\\
0 & 1 & 0 & 0 & 0\\
\shortminus 1 & 0 & 0 & 0 & 0\\
0 & 0 & 0 & 1 & 0\\
1 & 0 & 1 & 0 & 1
        \end{bsmallmatrix*}
        \begin{bsmallmatrix*}[r] 
                0 &\phantom{\shortminus} 0 & 0 &\phantom{\shortminus} 0 & \shortminus 1\\
1 & 1 & 1 & 0 & 1\\
0 & 0 & \shortminus 1 & 0 & 0\\
0 & 0 & 0 & 1 & 0\\
\shortminus 1 & 0 & 0 & 0 & 0
        \end{bsmallmatrix*}
        =
        \begin{bsmallmatrix*}[r]
                0 &\phantom{\shortminus} 0 & \shortminus 1 &\phantom{\shortminus} 0 & 0\\
1 & 1 & 1 & 0 & 1\\
0 & 0 & 1 & 0 & 1\\
0 & 0 & 0 & 1 & 0\\
\shortminus 1 & 0 & \shortminus 1 & 0 & \shortminus 1
        \end{bsmallmatrix*}.
\end{equation*}
Analogously, one can calculate such a matrix $B_2\in\Gl_5(\ZZ)$, that $C_{J'}=B_2^{tr}C_IB_2$. 
\end{example}
\begin{remark}\label{remark:hassepath:anyorient}
In view of \Cref{corr:ahchoredpath}\ref{corr:ahchoredpath:anyorient}, we usually omit the orientation of the edges in $p$-anchored paths when presenting Hasse digraphs of finite posets. 
\end{remark}

\section{Main results}

The main result of this work is a complete description of connected non-negative posets $I$ of Dynkin type $\Dyn_I=\AA_m$ in terms of their  Hasse digraphs $\CH(I)$. First, we show that \Cref{thm:a:main} holds for ``trees'', i.e.,  posets $I$ with graph $\ov \CH(I)$ being a tree.

\begin{lemma}\label{lemma:trees}
If $I=(V,\leq_I)$ is such a connected poset of size $n$ that the graph $\ov\CH(I)$ is a tree, then exactly one of the following conditions holds.

\begin{enumerate}[label=\normalfont{(\alph*)}]
    \item\label{lemma:trees:posit} The poset $I$ is non-negative of rank $n$  and $\ov \CH(I)$ is isomorphic to a simply-laced Dynkin diagram $\Dyn_I\in\{\AA_n, \DD_n, \EE_6, \EE_7, \EE_8 \}$ of Table~\ref{tbl:Dynkin_diagrams}.
    \item\label{lemma:trees:nneg} The poset $I$ is non-negative of rank $n-1$  and $\ov \CH(I)$ is isomorphic to a simply-laced Euclidean diagram $\widetilde{\mathrm{D}}\mathrm{yn}_I \in \{\widetilde{\DD}_n,\ab \widetilde{\EE}_6,\ab \widetilde{\EE}_7,\ab \widetilde{\EE}_8\}$ of Table~\ref{tbl:Euklid_diag}.
    \item\label{lemma:trees:indef} The poset $I$ is indefinite, i.e.,  the symmetric Gram matrix $G_I$ is indefinite.
\end{enumerate}
\end{lemma}

\begin{proof} 
Assume that  $I$ is such a connected poset that the graph $\ov\CH(I)$ is a tree, where $\CH(I)=(V, A)$ is the Hasse digraph of $I$, and let $C_I\in\MM_n(\ZZ)$ be its incidence matrix \eqref{df:incmat}. By \cite[Proposition 2.12]{simsonIncidenceCoalgebrasIntervally2009}, the matrix $C_{I}^{-1}=[c_{ab}]\in\MM_n(\ZZ)$ has coefficients
\begin{equation*}
c_{ab}=\begin{cases}
        \phantom{-}1,\textnormal{ iff }a=b,\\
        -1,\textnormal{ iff } a\to b \in A,\\
        \phantom{-}0, \textnormal{ otherwise},
\end{cases} 
\end{equation*}
i.e., the matrix $C_I^{-1}$ uniquely encodes $\CH(I)$. It is straightforward to check that
\begin{equation}\label{eq:digraph_euler_q}
    q_{\CH(I)}(x)\eqdef \tfrac{1}{2}x(C_{I}^{-1} +\ C_{I}^{-tr}) x^{tr} = \sum_{i\in V} x_i^2 
    - \sum_{a\to b\,\in A} x_{a}x_{b}
\end{equation}
is the \textit{Euler quadratic form} of $\CH(I)$ in the sense of~\cite[Section VII.4]{ASS}. Moreover, we have
\[
C_I^{tr} G_{\CH(I)}  C_I=\tfrac{1}{2} C_I^{tr} (C_{I}^{-1} +\ C_{I}^{-tr})  C_I = \tfrac{1}{2}(C_I^{tr}+C_I) = G_I,
\]
where $G_{\CH(I)}\eqdef \tfrac{1}{2}(C_{I}^{-1} +\ C_{I}^{-tr})$ is the symmetric Gram matrix of the Euler quadratic form $q_{\CH(I)}$ \eqref{eq:digraph_euler_q}. Hence, the lemma follows by~\cite[Corollary 2.4]{gasiorekOnepeakPosetsPositive2012} and~\cite[Proposition VII.4.5]{ASS}.
\end{proof}

One of the key observations used in the proof of \Cref{thm:a:main} is that the graph $\ov\CH(I)$, where $I$ is a connected non-negative-poset of Dynkin type $\Dyn_I=\AA_m$, has no vertices of degree larger than $2$. In the following theorem, we give a characterization of such posets.

\begin{theorem}\label{thm:digraphdegmax2}
Assume that $\CD=(V,A)$, where $V=\{1,\ldots,n\}$, is such a connected acyclic digraph that $\deg_{\ov D}(v)\leq 2$ for every $v\in V$. Exactly
one of the following conditions holds.
\begin{enumerate}[label=\normalfont{(\alph*)}]
    \item\label{thm:digraphdegmax2:path} $\ov \CD\simeq \AA_n$ and $\CD$ is the Hasse digraph of the positive poset $I_\CD$
    with $\Dyn_{I_\CD}=\AA_n$.
    \item\label{thm:digraphdegmax2:cycle} $\ov \CD$ is a cycle graph. Moreover:
    \begin{enumerate}[label=\normalfont{(b\arabic*)}, leftmargin=4ex]
\item\label{thm:digraphdegmax2:cycle:onesink} $\CD$ has exactly one sink and
\begin{enumerate}[label=\normalfont{(\roman*)}, leftmargin=1ex]
        \item\label{thm:digraphdegmax2:cycle:onesink:posita} 
        $\Dyn_{I_\CD}=\AA_n$, $I_\CD$ is positive, $ \CD\simeq\!\!\!
        \smash{\tikzsetnextfilename{th20_p1}\begin{tikzpicture}[baseline={([yshift=-10.75pt]current bounding box)},label distance=-2pt,xscale=0.65, yscale=0.74]
            \node[circle, fill=black, inner sep=0pt, minimum size=3.5pt, label=left:$\scriptscriptstyle 1$] (n1) at (0, 0  ) {};
            \node[circle, fill=black, inner sep=0pt, minimum size=3.5pt, label=above:$\scriptscriptstyle 2$] (n2) at (1, 0.50) {};
            \node (n3) at (2, 0.50) {$\scriptscriptstyle $};
            \node (n4) at (3, 0.50) {$\scriptscriptstyle $};
            \node[circle, fill=black, inner sep=0pt, minimum size=3.5pt, label=above:$\scriptscriptstyle n\mppmss 2$] (n5) at (4, 0.50) {};
            \node[circle, fill=black, inner sep=0pt, minimum size=3.5pt, label=above:$\scriptscriptstyle n\mppmss 1$] (n6) at (5, 0.50) {};
            \node[circle, fill=black, inner sep=0pt, minimum size=3.5pt, label=right:$\scriptstyle n$] (n7) at (6, 0  ) {};
            \foreach \x/\y in {1/2, 1/7, 5/6, 6/7}
            \draw [-stealth, shorten <= 2.50pt, shorten >= 2.50pt] (n\x) to  (n\y);
            \draw [-stealth, shorten <= 2.50pt, shorten >= -2.50pt] (n2) to  (n3);
            \draw [line width=1.2pt, line cap=round, dash pattern=on 0pt off 5\pgflinewidth, -, shorten <= -1.00pt, shorten >= -2.50pt] (n3) to  (n4);
            \draw [-stealth, shorten <= -2.50pt, shorten >= 2.50pt] (n4) to  (n5);
        \end{tikzpicture}}$\!, and $\CH(I_\CD)\neq \CD$,

        \item\label{thm:digraphdegmax2:cycle:onesink:positd}
        $\Dyn_{I_\CD}=\DD_n$, $I_\CD$ is positive, $\CD\simeq$\!\!\!
        \tikzsetnextfilename{th20_p2}\begin{tikzpicture}[baseline={([yshift=-2.75pt]current bounding box)},label distance=-2pt,xscale=0.65, yscale=0.74]
        \node[circle, fill=black, inner sep=0pt, minimum size=3.5pt, label=left:$\scriptscriptstyle 1$] (n1) at (0, 0.25 ) {};
        \node[circle, fill=black, inner sep=0pt, minimum size=3.5pt, label=above:$\scriptscriptstyle 2$] (n2) at (1, 0.50) {};
        \node (n3) at (2, 0.50) {$\scriptscriptstyle $};
        \node (n4) at (3, 0.50) {$\scriptscriptstyle $};
        \node[circle, fill=black, inner sep=0pt, minimum size=3.5pt, label=above:$\scriptscriptstyle n\mppmss 3$] (n5) at (4, 0.50) {};
        \node[circle, fill=black, inner sep=0pt, minimum size=3.5pt, label=above:$\scriptscriptstyle n\mppmss 2$] (n6) at (5, 0.50) {};
        \node[circle, fill=black, inner sep=0pt, minimum size=3.5pt, label=right:$\scriptstyle n$] (n7) at (6, 0.25 ) {};
        \node[circle, fill=black, inner sep=0pt, minimum size=3.5pt, label=below:$\scriptscriptstyle n\mppmss 1$] (n8) at (3, 0  ) {};
        \foreach \x/\y in {1/2, 1/8, 5/6, 6/7, 8/7}
        \draw [-stealth, shorten <= 2.50pt, shorten >= 2.50pt] (n\x) to  (n\y);
        \draw [-stealth, shorten <= 2.50pt, shorten >= -2.50pt] (n2) to  (n3);
        \draw [line width=1.2pt, line cap=round, dash pattern=on 0pt off 5\pgflinewidth, -, shorten <= -1.00pt, shorten >= -2.50pt] (n3) to  (n4);
        \draw [-stealth, shorten <= -2.50pt, shorten >= 2.50pt] (n4) to  (n5);
        \end{tikzpicture}\!,  and $\CH(I_\CD)= \CD$,
        
        \item\label{thm:digraphdegmax2:cycle:onesink:posite}
        $\Dyn_{I_\CD}=\EE_\star$, $\CH(I_\CD)=\CD$ and
        \begin{enumerate}[label=\textnormal{\textbullet}, leftmargin=1ex] 
            \item $I_\CD$ is positive,
            $\CD\simeq$\!\!\!
            \tikzsetnextfilename{th20_p3}\begin{tikzpicture}[baseline={([yshift=-2.75pt]current bounding box)},
            label distance=-2pt,xscale=0.55, yscale=0.54]
            \node[circle, fill=black, inner sep=0pt, minimum size=3.5pt, label=below:$\scriptscriptstyle 1$] (n1) at (0, 0.50) {};
            \node[circle, fill=black, inner sep=0pt, minimum size=3.5pt, label=above:$\scriptscriptstyle 2$] (n2) at (1, 1  ) {};
            \node[circle, fill=black, inner sep=0pt, minimum size=3.5pt, label=above:$\scriptscriptstyle 3$] (n3) at (2, 1  ) {};
            \node[circle, fill=black, inner sep=0pt, minimum size=3.5pt, label=below:$\scriptscriptstyle 4$] (n4) at (1, 0  ) {};
            \node[circle, fill=black, inner sep=0pt, minimum size=3.5pt, label=below:$\scriptscriptstyle 5$] (n5) at (2, 0  ) {};
            \node[circle, fill=black, inner sep=0pt, minimum size=3.5pt, label=below:$\scriptscriptstyle 6$] (n6) at (3, 0.50) {};
            \foreach \x/\y in {1/2, 1/4, 2/3, 3/6, 4/5, 5/6}
            \draw [-stealth, shorten <= 2.50pt, shorten >= 2.50pt] (n\x) to  (n\y);
            \end{tikzpicture} or\ \  
            $\CD\simeq$\!\!\!
            \tikzsetnextfilename{th20_p4}\begin{tikzpicture}[baseline={([yshift=-2.75pt]current bounding box)},
            label distance=-2pt,xscale=0.55, yscale=0.54]
            \node[circle, fill=black, inner sep=0pt, minimum size=3.5pt, label=below:$\scriptscriptstyle 1$] (n1) at (0, 0.50) {};
            \node[circle, fill=black, inner sep=0pt, minimum size=3.5pt, label=above:$\scriptscriptstyle 2$] (n2) at (1, 1  ) {};
            \node[circle, fill=black, inner sep=0pt, minimum size=3.5pt, label=above:$\scriptscriptstyle 3$] (n3) at (2, 1  ) {};
            \node[circle, fill=black, inner sep=0pt, minimum size=3.5pt, label=above:$\scriptscriptstyle 4$] (n4) at (3, 1  ) {};
            \node[circle, fill=black, inner sep=0pt, minimum size=3.5pt, label=below:$\scriptscriptstyle 5$] (n5) at (1.50, 0  ) {};
            \node[circle, fill=black, inner sep=0pt, minimum size=3.5pt, label=below:$\scriptscriptstyle 6$] (n6) at (2.50, 0  ) {};
            \node[circle, fill=black, inner sep=0pt, minimum size=3.5pt, label=below:$\scriptscriptstyle 7$] (n7) at (4, 0.50) {};
            \foreach \x/\y in {1/2, 1/5, 2/3, 3/4, 4/7, 5/6, 6/7}
            \draw [-stealth, shorten <= 2.50pt, shorten >= 2.50pt] (n\x) to  (n\y);
            \end{tikzpicture} or \ 
            $\CD\simeq$\!\!\!
            \tikzsetnextfilename{th20_p5}\begin{tikzpicture}[baseline={([yshift=-2.75pt]current bounding box)},
            label distance=-2pt,xscale=0.55, yscale=0.54]
            \node[circle, fill=black, inner sep=0pt, minimum size=3.5pt, label=below:$\scriptscriptstyle 1$] (n1) at (0, 0.50) {};
            \node[circle, fill=black, inner sep=0pt, minimum size=3.5pt, label=above:$\scriptscriptstyle 2$] (n2) at (1, 1  ) {};
            \node[circle, fill=black, inner sep=0pt, minimum size=3.5pt, label=above:$\scriptscriptstyle 3$] (n3) at (2, 1  ) {};
            \node[circle, fill=black, inner sep=0pt, minimum size=3.5pt, label=above:$\scriptscriptstyle 4$] (n4) at (3, 1  ) {};
            \node[circle, fill=black, inner sep=0pt, minimum size=3.5pt, label=above:$\scriptscriptstyle 5$] (n5) at (4, 1  ) {};
            \node[circle, fill=black, inner sep=0pt, minimum size=3.5pt, label=below:$\scriptscriptstyle 6$] (n6) at (2, 0  ) {};
            \node[circle, fill=black, inner sep=0pt, minimum size=3.5pt, label=below:$\scriptscriptstyle 7$] (n7) at (3, 0  ) {};
            \node[circle, fill=black, inner sep=0pt, minimum size=3.5pt, label=below:$\scriptscriptstyle 8$] (n8) at (5, 0.50) {};
            \foreach \x/\y in {1/2, 1/6, 2/3, 3/4, 4/5, 5/8, 6/7, 7/8}
            \draw [-stealth, shorten <= 2.50pt, shorten >= 2.50pt] (n\x) to  (n\y);
            \end{tikzpicture} with
            $\Dyn_{I_\CD}=\EE_6$, $\EE_7$ and $\EE_8$, respectively;
            
            \item $I_\CD$ is principal,
            $\CD\simeq$\!\!\!
            \tikzsetnextfilename{th20_p6}\begin{tikzpicture}[baseline={([yshift=-2.75pt]current bounding box)},
            label distance=-2pt,xscale=0.55, yscale=0.54]
            \node[circle, fill=black, inner sep=0pt, minimum size=3.5pt, label=below:$\scriptscriptstyle 1$] (n1) at (0, 0.50) {};
            \node[circle, fill=black, inner sep=0pt, minimum size=3.5pt, label=above:$\scriptscriptstyle 2$] (n2) at (1, 1  ) {};
            \node[circle, fill=black, inner sep=0pt, minimum size=3.5pt, label=above:$\scriptscriptstyle 3$] (n3) at (2, 1  ) {};
            \node[circle, fill=black, inner sep=0pt, minimum size=3.5pt, label=above:$\scriptscriptstyle 4$] (n4) at (3, 1  ) {};
            \node[circle, fill=black, inner sep=0pt, minimum size=3.5pt, label=below:$\scriptscriptstyle 5$] (n5) at (1, 0  ) {};
            \node[circle, fill=black, inner sep=0pt, minimum size=3.5pt, label=below:$\scriptscriptstyle 6$] (n6) at (2, 0  ) {};
            \node[circle, fill=black, inner sep=0pt, minimum size=3.5pt, label=below:$\scriptscriptstyle 7$] (n7) at (3, 0  ) {};
            \node[circle, fill=black, inner sep=0pt, minimum size=3.5pt, label=below:$\scriptscriptstyle 8$] (n8) at (4, 0.50) {};
            \foreach \x/\y in {1/2, 1/5, 2/3, 3/4, 4/8, 5/6, 6/7, 7/8}
            \draw [-stealth, shorten <= 2.50pt, shorten >= 2.50pt] (n\x) to  (n\y);
            \end{tikzpicture} or \ 
            $\CD\simeq$\!\!\!
            \tikzsetnextfilename{th20_p7}\begin{tikzpicture}[baseline={([yshift=-2.75pt]current bounding box)},
            label distance=-2pt,xscale=0.55, yscale=0.54]
            \node[circle, fill=black, inner sep=0pt, minimum size=3.5pt, label=below:$\scriptscriptstyle 1$] (n1) at (0, 0.50) {};
            \node[circle, fill=black, inner sep=0pt, minimum size=3.5pt, label=above:$\scriptscriptstyle 2$] (n2) at (1, 1  ) {};
            \node[circle, fill=black, inner sep=0pt, minimum size=3.5pt, label=above:$\scriptscriptstyle 3$] (n3) at (2, 1  ) {};
            \node[circle, fill=black, inner sep=0pt, minimum size=3.5pt, label=above:$\scriptscriptstyle 4$] (n4) at (3, 1  ) {};
            \node[circle, fill=black, inner sep=0pt, minimum size=3.5pt, label=above:$\scriptscriptstyle 5$] (n5) at (4, 1  ) {};
            \node[circle, fill=black, inner sep=0pt, minimum size=3.5pt, label=above:$\scriptscriptstyle 6$] (n6) at (5, 1  ) {};
            \node[circle, fill=black, inner sep=0pt, minimum size=3.5pt, label=below:$\scriptscriptstyle 7$] (n7) at (2.50, 0  ) {};
            \node[circle, fill=black, inner sep=0pt, minimum size=3.5pt, label=below:$\scriptscriptstyle 8$] (n8) at (3.50, 0  ) {};
            \node[circle, fill=black, inner sep=0pt, minimum size=3.5pt, label=below:$\scriptscriptstyle 9$] (n9) at (6, 0.50) {};
            \foreach \x/\y in {1/2, 1/7, 2/3, 3/4, 4/5, 5/6, 6/9, 7/8, 8/9}
            \draw [-stealth, shorten <= 2.50pt, shorten >= 2.50pt] (n\x) to  (n\y);
            \end{tikzpicture}\\ with
            $\Dyn_{I_\CD}=\EE_7$ and $\EE_8$, respectively;
        \end{enumerate}

        \item\label{thm:digraphdegmax2:cycle:onesink:indef} otherwise, the poset $I_\CD$ is indefinite. 
    \end{enumerate}

    \item\label{thm:digraphdegmax2:cycle:multsink} $\CD$ has  more than one sink,  $I_\CD$ is principal with $\Dyn_{I_\CD}=\AA_{n-1}$ and $\CH(I_\CD)=\CD$. 
    \end{enumerate}
\end{enumerate}
\end{theorem}

\begin{proof} If  $\CD$ is such a connected acyclic digraph, that $\deg_{\ov D}(v)\leq 2$ for every $v\in V$, then  $\ov \CD$ is  either a cycle or a path graph.\smallskip 
        
\ref{thm:digraphdegmax2:path}  
If $\ov \CD$ is a path graph, then clearly $\ov\CD$ is a tree and \ref{thm:digraphdegmax2:path} follows
by \Cref{lemma:trees}\ref{lemma:trees:posit}.\smallskip

\ref{thm:digraphdegmax2:cycle}
Assume that $\ov \CD$ is a cycle. It is easy to see that $\CD$ is composed of $2k$ oriented paths and has exactly $k$ sources and $k$ sinks, where $0\neq k\in\NN$. First, we assume that $k=1$.

\ref{thm:digraphdegmax2:cycle:onesink} Since $\CD$ has exactly one sink, 
it is composed of two oriented paths, and we have
\begin{equation}\label{eq:digraph_Apr}
        \CD \simeq \CA_{p,r}\ \ \eqdef
        \tikzsetnextfilename{th20prf_p1}\begin{tikzpicture}[baseline={([yshift=-2.75pt]current bounding box)},label distance=-2pt,xscale=0.65, yscale=0.74]
                \node[circle, fill=black, inner sep=0pt, minimum size=3.5pt, label=left:$\scriptscriptstyle 1$] (n1) at (0, 0.50) {};
                \node[circle, fill=black, inner sep=0pt, minimum size=3.5pt, label=above:$\scriptscriptstyle 2$] (n2) at (1, 1  ) {};
                \node[circle, fill=black, inner sep=0pt, minimum size=3.5pt, label=below:$\scriptscriptstyle p\mpppss 1$] (n3) at (1.50, 0  ) {};
                \node (n4) at (2, 1  ) {$\scriptscriptstyle $};
                \node (n5) at (2.50, 0  ) {$\scriptscriptstyle $};
                \node (n6) at (3, 1  ) {$\scriptscriptstyle $};
                \node (n7) at (3.50, 0  ) {$\scriptscriptstyle $};
                \node[circle, fill=black, inner sep=0pt, minimum size=3.5pt, label=above:$\scriptscriptstyle p\mppmss 1$] (n8) at (4, 1  ) {};
                \node[circle, fill=black, inner sep=0pt, minimum size=3.5pt, label=above:$\scriptscriptstyle p$] (n9) at (5, 1  ) {};
                \node[circle, fill=black, inner sep=0pt, minimum size=3.5pt, label=below:$\scriptscriptstyle p\mpppss r\mppmss 1$] (n10) at (4.50, 0  ) {};
                \node[circle, fill=black, inner sep=0pt, minimum size=3.5pt, label=right:{$\scriptstyle p\mppps r=n$}] (n11) at (6, 0.50) {};
                \foreach \x/\y in {1/2, 1/3, 8/9, 9/11, 10/11}
                \draw [-stealth, shorten <= 2.50pt, shorten >= 2.50pt] (n\x) to  (n\y);
                \foreach \x/\y in {2/4, 3/5}
                \draw [-stealth, shorten <= 2.50pt, shorten >= -2.50pt] (n\x) to  (n\y);
                \foreach \x/\y in {4/6, 5/7}
                \draw [line width=1.2pt, line cap=round, dash pattern=on 0pt off 5\pgflinewidth, -, shorten <= -1.00pt, shorten >= -2.50pt] (n\x) to  (n\y);
                \foreach \x/\y in {6/8, 7/10}
                \draw [-stealth, shorten <= -2.50pt, shorten >= 2.50pt] (n\x) to  (n\y);
        \end{tikzpicture}
\end{equation}
where $1\leq r\leq p$ and $p+r=n\geq 3$. We note that $I_\CD$ is a one-peak poset and recall that, up to the isomorphism of Hasse digraphs, all such positive and principal posets are classified in~\cite[Theorem 5.2]{gasiorekOnepeakPosetsPositive2012} and~\cite[Theorem~3.5]{gasiorekAlgorithmicStudyNonnegative2015} (see Theorem~\ref{thm:onepeak_posit} for the positive case). This classification shows that the following cases are possible.

\ref{thm:digraphdegmax2:cycle:onesink:posita} 
For $r=1$ (i.e., $\CD \simeq \CA_{n-1,1}$) we have $\CH(I_\CD)={}_{0}\AA^*_{n-1}\neq \CD$ and $\Dyn_{I_\CD}=\AA_n$; \ref{thm:digraphdegmax2:cycle:onesink:positd} for $r=2$ (i.e., $\CD \simeq \CA_{n-2,2}$) we have 
$\CH(I_\CD)=\wh\DD^*_{n-2}\diamond \AA_{1} = \CD$ and $\Dyn_{I_\CD}=\DD_n$, hence thesis follows by~\cite[Theorem 5.2]{gasiorekOnepeakPosetsPositive2012}. 

\ref{thm:digraphdegmax2:cycle:onesink:posite}
If $r>2$, then  exactly $5$ digraphs of the shape~\eqref{eq:digraph_Apr} define non-negative posets: 
\begin{itemize}
    \item $\CA_{3,3}\simeq \PP_{5}$, $\CA_{4,3}\simeq \PP_{24}$ and $\CA_{5,3}\simeq \PP_{93}$ are Hasse digraphs of a positive poset of the Dynkin type $\EE_6$, $\EE_7$ and $\EE_8$, respectively, see~\cite[Tables 6.1–6.3]{gasiorekOnepeakPosetsPositive2012};
    \item $\CA_{4,4}$ and $\CA_{6,3}$ are Hasse digraphs of a principal poset 
    of the Dynkin type $\EE_7$ and $\EE_8$, respectively, see~\cite{gasiorekCoxeterTypeClassification2019}.
\end{itemize}
In each  of the remaining cases,
the poset $I_{\CA_{p,r}}$ contains one of the subposets: $I_{\CA_{7,3}}$ or $I_{\CA_{5,4}}$.
\begin{center}
        $\CA_{7,3}=$
        \tikzsetnextfilename{th20prf_p22}\begin{tikzpicture}[baseline={([yshift=-2.75pt]current bounding box)},label distance=-2pt,xscale=0.65, yscale=0.74]
                \node[circle, fill=black, inner sep=0pt, minimum size=3.5pt, label=below:$\scriptscriptstyle 1$] (n1) at (0, 0.50) {};
                \node[circle, fill=black, inner sep=0pt, minimum size=3.5pt, label=above:$\scriptscriptstyle 2$] (n2) at (1, 1  ) {};
                \node[circle, fill=black, inner sep=0pt, minimum size=3.5pt, label=above:$\scriptscriptstyle 3$] (n3) at (2, 1  ) {};
                \node[circle, fill=black, inner sep=0pt, minimum size=3.5pt, label=above:$\scriptscriptstyle 4$] (n4) at (3, 1  ) {};
                \node[circle, fill=black, inner sep=0pt, minimum size=3.5pt, label=above:$\scriptscriptstyle 5$] (n5) at (4, 1  ) {};
                \node[circle, fill=black, inner sep=0pt, minimum size=3.5pt, label=above:$\scriptscriptstyle 6$] (n6) at (5, 1  ) {};
                \node[circle, fill=black, inner sep=0pt, minimum size=3.5pt, label=above:$\scriptscriptstyle 7$] (n7) at (6, 1  ) {};
                \node[circle, fill=black, inner sep=0pt, minimum size=3.5pt, label=below:$\scriptscriptstyle 8$] (n8) at (3, 0  ) {};
                \node[circle, fill=black, inner sep=0pt, minimum size=3.5pt, label=below:$\scriptscriptstyle 9$] (n9) at (4, 0  ) {};
                \node[circle, fill=black, inner sep=0pt, minimum size=3.5pt, label=below:$\scriptscriptstyle 10$] (n10) at (7, 0.50) {};
                \foreach \x/\y in {1/2, 1/8, 2/3, 3/4, 4/5, 5/6, 6/7, 7/10, 8/9, 9/10}
                \draw [-stealth, shorten <= 2.50pt, shorten >= 2.50pt] (n\x) to  (n\y);
        \end{tikzpicture}\qquad
        $\CA_{5,4}=$
        \tikzsetnextfilename{th20prf_p23}\begin{tikzpicture}[baseline={([yshift=-2.75pt]current bounding box)},label distance=-2pt,xscale=0.65, yscale=0.74]
                \node[circle, fill=black, inner sep=0pt, minimum size=3.5pt, label=below:$\scriptscriptstyle 1$] (n1) at (0, 0.50) {};
                \node[circle, fill=black, inner sep=0pt, minimum size=3.5pt, label=above:$\scriptscriptstyle 2$] (n2) at (1, 1  ) {};
                \node[circle, fill=black, inner sep=0pt, minimum size=3.5pt, label=above:$\scriptscriptstyle 3$] (n3) at (2, 1  ) {};
                \node[circle, fill=black, inner sep=0pt, minimum size=3.5pt, label=above:$\scriptscriptstyle 4$] (n4) at (3, 1  ) {};
                \node[circle, fill=black, inner sep=0pt, minimum size=3.5pt, label=above:$\scriptscriptstyle 5$] (n5) at (4, 1  ) {};
                \node[circle, fill=black, inner sep=0pt, minimum size=3.5pt, label=below:$\scriptscriptstyle 6$] (n6) at (1.50, 0  ) {};
                \node[circle, fill=black, inner sep=0pt, minimum size=3.5pt, label=below:$\scriptscriptstyle 7$] (n7) at (2.50, 0  ) {};
                \node[circle, fill=black, inner sep=0pt, minimum size=3.5pt, label=below:$\scriptscriptstyle 8$] (n8) at (3.50, 0  ) {};
                \node[circle, fill=black, inner sep=0pt, minimum size=3.5pt, label=below:$\scriptscriptstyle 9$] (n9) at (5, 0.50) {};
                \foreach \x/\y in {1/2, 1/6, 2/3, 3/4, 4/5, 5/9, 6/7, 7/8, 8/9}
                \draw [-stealth, shorten <= 2.50pt, shorten >= 2.50pt] (n\x) to  (n\y);
        \end{tikzpicture}
\end{center}
Since
\begin{itemize}
        \item $q_{\CA_{7,3}}([11,\ab -3,\ab -3,\ab -3,\ab -3,\ab -3,\ab -3,\ab -7,\ab -7,\ab 10])=-5$ and
        \item $q_{\CA_{5,4}}([11,\ab -4,\ab -4,\ab -4,\ab -4,\ab -5,\ab -5,\ab -5,\ab 9])=-9$,
\end{itemize}
these posets are indefinite and \ref{thm:digraphdegmax2:cycle:onesink:indef} follows (see Remark~\ref{rmk:indef}).\pagebreak

\ref{thm:digraphdegmax2:cycle:multsink} Assume that $\ov \CD$ is a cycle that 
is composed of $s\eqdef 2k>2$ oriented paths. Without loss of generality,
we can assume that
\begin{center}
    $\ov \CD\ \ \simeq$
    \tikzsetnextfilename{th20prf_p2}\begin{tikzpicture}[baseline={([yshift=-2.75pt]current bounding box)},label distance=-2pt,xscale=0.64, yscale=0.64]
            \draw[draw, fill=cyan!10](0,1) circle (19pt);
            \draw[draw, fill=cyan!10](5,2) circle (17pt);
            \draw[draw, fill=cyan!10](9,1) circle (19pt);
            \draw[draw, fill=cyan!10](5,0) circle (17pt);
            \node[circle, fill=black, inner sep=0pt, minimum size=3.5pt, label=left:{$\scriptscriptstyle 1=r_1$}] (n1) at (0, 1  ) {};
            \node[circle, fill=black, inner sep=0pt, minimum size=3.5pt, label=above left:$\scriptscriptstyle 2$] (n2) at (1, 2  ) {};
            \node (n3) at (2, 2  ) {$\scriptscriptstyle $};
            \node (n4) at (3, 2  ) {$\scriptscriptstyle $};
            \node[circle, fill=black, inner sep=0pt, minimum size=3.5pt, label=above:$\scriptscriptstyle r_2\mppmss 1$] (n5) at (4, 2  ) {};
            \node[circle, fill=black, inner sep=0pt, minimum size=3.5pt, label=above:$\scriptscriptstyle r2$] (n6) at (5, 2  ) {};
            \node[circle, fill=black, inner sep=0pt, minimum size=3.5pt, label=above:$\scriptscriptstyle r_2\mpppss 1$] (n7) at (6, 2  ) {};
            \node (n8) at (7, 2  ) {$\scriptscriptstyle $};
            \node (n9) at (8, 2  ) {$\scriptscriptstyle $};
            \node[circle, fill=black, inner sep=0pt, minimum size=3.5pt, label=right:$\scriptscriptstyle r_t$] (n10) at (9, 1  ) {};
            \node[circle, fill=black, inner sep=0pt, minimum size=3.5pt, label=below right:$\scriptscriptstyle r_t\mpppss 1$] (n11) at (8, 0  ) {};
            \node (n12) at (7, 0  ) {$\scriptscriptstyle $};
            \node (n13) at (6, 0  ) {$\scriptscriptstyle $};
            \node[circle, fill=black, inner sep=0pt, minimum size=3.5pt, label=below:$\scriptscriptstyle n\mppmss 1$] (n14) at (2, 0  ) {};
            \node[circle, fill=black, inner sep=0pt, minimum size=3.5pt, label=below left:$\scriptscriptstyle n$] (n15) at (1, 0  ) {};
            \node (n16) at (3, 0  ) {};
            \node (n17) at (4, 0  ) {};
            \node[circle, fill=black, inner sep=0pt, minimum size=3.5pt, label=below:$\scriptscriptstyle r_s$] (n18) at (5, 0  ) {};
            
            \foreach \x/\y in {1/2, 5/6, 6/7, 10/11, 14/15, 15/1}
            \draw [-, shorten <= 2.50pt, shorten >= 2.50pt] (n\x) to  (n\y);
            \foreach \x/\y in {2/3, 7/8, 11/12, 18/17}
            \draw [-, shorten <= 2.50pt, shorten >= -2.50pt] (n\x) to  (n\y);
            \foreach \x/\y in {3/4, 8/9, 12/13, 17/16}
            \draw [line width=1.2pt, line cap=round, dash pattern=on 0pt off 5\pgflinewidth, -, shorten <= -1.00pt, shorten >= -2.50pt] (n\x) to  (n\y);
            \foreach \x/\y in {4/5, 9/10, 13/18, 16/14}
            \draw [-, shorten <= -2.50pt, shorten >= 2.50pt] (n\x) to  (n\y);
    \end{tikzpicture}
\end{center}
where every  $r_1,\ldots,r_s\in \{1,\ldots, n \}$ is either a source or a sink and
\begin{itemize}
    \item $1 = r_1 < r_2 < \cdots < r_t<\cdots < r_s$,
    \item $\{1,\ldots,r_2\}$, $\{r_2,\ldots,r_3\}$, $\ldots$\,, $\{r_{s-1},\ldots,r_s\},\{r_s,\ldots,n, 1\}\simeq\,  \bullet \raisebox{-1.5pt}{\parbox{25pt}{\rightarrowfill}} \,\hdashrule[1.5pt]{12pt}{0.4pt}{1pt} \raisebox{-1.5pt}{\parbox{25pt}{\rightarrowfill}} \bullet$.
\end{itemize}
Since the quadratic form $q_{I_\CD}\colon \ZZ^n\to \ZZ$ \eqref{eq:quadratic_form} is given by the formula:
{\allowdisplaybreaks\begin{align*}
    q_{I_\CD}(x) =& \sum_{i} x_i^2 \,\,+ \sum_{{1\leq t < s}}
    \Big(\sum_{r_t\leq i<j \leq r_{t+1}} x_i x_j\Big)  +
    \sum_{r_s\leq i<j \leq n} x_i x_j + x_1(x_{r_s}+\cdots+ x_n)\\
    =& \sum_{i\not\in\{r_1,\ldots,r_s \}} \frac{1}{2}x_i^2 + \frac{1}{2}
    \sum_{1\leq t < s} \Big(\sum_{r_t\leq i\leq r_{t+1}} x_i\Big)^2 +
    \frac{1}{2}(x_{r_s} + \cdots  + x_{n} + x_1)^2,
\end{align*}}%
the poset $I_\CD$ is non-negative. Consider the vector $h_\CD=[h_1,\ldots,h_n]\in\ZZ^n$, where $h_i=0$ if $i\not\in\{r_1,\ldots,r_s \}$, $h_{r_i}=1$ if $i$ is odd and $-1$ otherwise. That is, $h_\CD\in\ZZ^n$ has $s=2k$ non-zero coordinates (equal $1$ and $-1$ alternately). It is straightforward to check that
\[
q_{I_\CD}(h_\CD) = \tfrac{1}{2}(h_{r_1}+h_{r_2})^2+\cdots+\tfrac{1}{2}(h_{r_{s-1}}+h_{r_s})^2 +\tfrac{1}{2}(h_{r_s}+h_1)^2=0,
\]
i.e.,  $I_\CD$ is not positive and the first coordinate of the vector $h_\CD\in\Ker q_{I_\CD}$ equals $h_1=1$. Since $\smash{\ov \CD^{(1)}}\simeq \AA_{n-1}$, by~\ref{thm:digraphdegmax2:path} the poset $I_{\CD^{(1)}}\subset I_\CD$ is positive and  $\Dyn_{I_{\CD^{(1)}}}\!=\AA_{n-1}$. Hence, we conclude that $I_\CD$ is principal of Dynkin type $\Dyn_{I_\CD}\!=\AA_{n-1}$, see \Cref{df:Dynkin_type}.
\end{proof}

One of the methods to prove that a particular poset $I$ is indefinite is to show that it contains a subposet $J\subseteq I$ that is indefinite (we use this argument in the proof of Theorem~\ref{thm:digraphdegmax2}\ref{thm:digraphdegmax2:cycle:onesink:posite}). The following lemma presents a list of indefinite posets used further in the paper. 

\begin{lemma}\label{lemma:posindef}
If $I$ is such a finite poset, that its Hasse digraph $\CH(I)$  is isomorphic with any of the  digraphs $\CF_1,\ldots,\CF_7$ given in \Cref{tbl:indefposets}, then $I$ is indefinite.\vspace*{-0.3ex}
{\newcommand{\tsep}{\,}
\begin{longtable}{@{}c@{\tsep}c@{\tsep}c@{\tsep}c@{\tsep}c@{\tsep}c@{\tsep}c@{\tsep}c@{\tsep}c@{\tsep}c@{\tsep}c@{\tsep}c@{}}
\multicolumn{3}{@{\tsep}c@{\tsep}}{$\CF_1\colon\!\!\!\!$
        \tikzsetnextfilename{indef_f1}\begin{tikzpicture}[baseline={([yshift=-2.75pt]current bounding box)},label distance=-2pt, xscale=0.65, yscale=1.0]
                \node[circle, draw, inner sep=0pt, minimum size=3.5pt, label=right:$\scriptscriptstyle \mppmss 2$] (n1) at (1, 0  ) {};
                \node[circle, draw, inner sep=0pt, minimum size=3.5pt, label=below:$\scriptscriptstyle 2$] (n2) at (0, 0.50) {};
                \node[circle, draw, inner sep=0pt, minimum size=3.5pt, label=right:$\scriptscriptstyle \mppmss 2$] (n3) at (1, 1  ) {};
                \node[circle, draw, inner sep=0pt, minimum size=3.5pt, label={[xshift=-0.2ex]above right:$\scriptscriptstyle 2$}] (n4) at (2, 0.50) {};
                \node[circle, draw, inner sep=0pt, minimum size=3.5pt, label=below:$\scriptscriptstyle \mppmss 1$] (n5) at (3, 0.50) {};
                \foreach \x/\y in {2/1, 2/3, 4/1, 4/3}
                \draw [-stealth, shorten <= 2.50pt, shorten >= 2.50pt] (n\x) to  (n\y);
                \draw [-, shorten <= 2.50pt, shorten >= 2.50pt] (n4) to  (n5);
\end{tikzpicture}} & \multicolumn{3}{@{\tsep}c@{\tsep}}{$\CF_2\colon\!\!\!\!$
        \tikzsetnextfilename{indef_f2}\begin{tikzpicture}[baseline={([yshift=-2.75pt]current bounding box)},label distance=-2pt, xscale=0.65, yscale=1.00]
                \node[circle, draw, inner sep=0pt, minimum size=3.5pt, label=below:$\scriptscriptstyle \mppmss 4$] (n1) at (3, 0.50) {};
                \node[circle, draw, inner sep=0pt, minimum size=3.5pt, label=below:$\scriptscriptstyle \mppmss 4$] (n2) at (2, 0.50) {};
                \node[circle, draw, inner sep=0pt, minimum size=3.5pt, label=below:$\scriptscriptstyle \mppmss 6$] (n3) at (0, 0.50) {};
                \node[circle, draw, inner sep=0pt, minimum size=3.5pt, label=right:$\scriptscriptstyle 7$] (n4) at (1, 1  ) {};
                \node[circle, draw, inner sep=0pt, minimum size=3.5pt, label=right:$\scriptscriptstyle 5$] (n5) at (1, 0  ) {};
                \foreach \x/\y in {4/2, 4/3, 5/2, 5/3}
                \draw [-stealth, shorten <= 2.50pt, shorten >= 2.50pt] (n\x) to  (n\y);
                \draw [-, shorten <= 2.50pt, shorten >= 2.50pt] (n2) to  (n1);
\end{tikzpicture}} & \multicolumn{3}{@{\tsep}c@{\tsep}}{$\CF_3\colon\!\!\!\!$
        \tikzsetnextfilename{indef_f3}\begin{tikzpicture}[baseline={([yshift=-2.75pt]current bounding box)},label distance=-2pt,
                xscale=0.65, yscale=0.5]
                \node[circle, draw, inner sep=0pt, minimum size=3.5pt, label={[xshift=0.2ex]below:$\scriptscriptstyle \mppmss 20$}] (n1) at (4, 1  ) {};
                \node[circle, draw, inner sep=0pt, minimum size=3.5pt, label=below:$\scriptscriptstyle 4$] (n2) at (3, 1  ) {};
                \node[circle, draw, inner sep=0pt, minimum size=3.5pt, label=below:$\scriptscriptstyle 4$] (n3) at (2, 1  ) {};
                \node[circle, draw, inner sep=0pt, minimum size=3.5pt, label=below:$\scriptscriptstyle 4$] (n4) at (1, 1  ) {};
                \node[circle, draw, inner sep=0pt, minimum size=3.5pt, label=below:$\scriptscriptstyle 4$] (n5) at (0, 1  ) {};
                \node[circle, draw, inner sep=0pt, minimum size=3.5pt, label=below:$\scriptscriptstyle 5$] (n6) at (3, 2  ) {};
                \node[circle, draw, inner sep=0pt, minimum size=3.5pt, label=below:$\scriptscriptstyle 5$] (n7) at (2, 2  ) {};
                \node[circle, draw, inner sep=0pt, minimum size=3.5pt, label=below:$\scriptscriptstyle 5$] (n8) at (1, 2  ) {};
                \node[circle, draw, inner sep=0pt, minimum size=3.5pt, label=right:$\scriptscriptstyle 9$] (n9) at (3, 0  ) {};
                \foreach \x/\y in {1/2, 1/6, 1/9, 2/3, 3/4, 4/5, 6/7, 7/8}
                \draw [-, shorten <= 2.50pt, shorten >= 2.50pt] (n\x) to  (n\y);
\end{tikzpicture}} & \multicolumn{3}{@{\tsep}l@{\tsep}}{$\CF_4\colon\!\!\!\!$
        \tikzsetnextfilename{indef_f4}\begin{tikzpicture}[baseline={([yshift=-2.75pt]current bounding box)},label distance=-2pt,
                xscale=0.65, yscale=0.5]
                \node[circle, draw, inner sep=0pt, minimum size=3.5pt, label={[xshift=0.1ex]below:$\scriptscriptstyle \mppmss 21$}] (n1) at (6, 1  ) {};
                \node[circle, draw, inner sep=0pt, minimum size=3.5pt, label=below:$\scriptscriptstyle 3$] (n2) at (5, 1  ) {};
                \node[circle, draw, inner sep=0pt, minimum size=3.5pt, label=below:$\scriptscriptstyle 3$] (n3) at (4, 1  ) {};
                \node[circle, draw, inner sep=0pt, minimum size=3.5pt, label=below:$\scriptscriptstyle 3$] (n4) at (3, 1  ) {};
                \node[circle, draw, inner sep=0pt, minimum size=3.5pt, label=below:$\scriptscriptstyle 3$] (n5) at (2, 1  ) {};
                \node[circle, draw, inner sep=0pt, minimum size=3.5pt, label=below:$\scriptscriptstyle 3$] (n6) at (1, 1  ) {};
                \node[circle, draw, inner sep=0pt, minimum size=3.5pt, label=below:$\scriptscriptstyle 3$] (n7) at (0, 1  ) {};
                \node[circle, draw, inner sep=0pt, minimum size=3.5pt, label=below:$\scriptscriptstyle 7$] (n8) at (5, 2  ) {};
                \node[circle, draw, inner sep=0pt, minimum size=3.5pt, label=below:$\scriptscriptstyle 7$] (n9) at (4, 2  ) {};
                \node[circle, draw, inner sep=0pt, minimum size=3.5pt, label=right:$\scriptscriptstyle 10$] (n10) at (5, 0  ) {};
                \foreach \x/\y in {1/2, 1/8, 1/10, 2/3, 3/4, 4/5, 5/6, 6/7, 8/9}
                \draw [-, shorten <= 2.50pt, shorten >= 2.50pt] (n\x) to  (n\y);
\end{tikzpicture}}\\[-0.3ex]
\multicolumn{4}{@{\tsep}c@{\tsep}}{$\CF_5\colon\!\!\!\!$
        \tikzsetnextfilename{indef_f5}\begin{tikzpicture}[baseline={([yshift=-2.75pt]current bounding box)},label distance=-2pt,
                xscale=0.52, yscale=0.5]
                \node[circle, draw, inner sep=0pt, minimum size=3.5pt, label=below:$\scriptscriptstyle \mppmss 7$] (n1) at (4, 1  ) {};
                \node[circle, draw, inner sep=0pt, minimum size=3.5pt, label=below:$\scriptscriptstyle 5$] (n2) at (0, 0  ) {};
                \node[circle, draw, inner sep=0pt, minimum size=3.5pt, label=below:$\scriptscriptstyle \mppmss 2$] (n3) at (1, 0  ) {};
                \node[circle, draw, inner sep=0pt, minimum size=3.5pt, label=below:$\scriptscriptstyle \mppmss 2$] (n4) at (2, 0  ) {};
                \node[circle, draw, inner sep=0pt, minimum size=3.5pt, label=below:$\scriptscriptstyle \mppmss 2$] (n5) at (3, 0  ) {};
                \node[circle, draw, inner sep=0pt, minimum size=3.5pt, label=below:$\scriptscriptstyle \mppmss 2$] (n6) at (4, 0  ) {};
                \node[circle, draw, inner sep=0pt, minimum size=3.5pt, label=below:$\scriptscriptstyle \mppmss 2$] (n7) at (5, 0  ) {};
                \node[circle, draw, inner sep=0pt, minimum size=3.5pt, label=below:$\scriptscriptstyle \mppmss 2$] (n8) at (6, 0  ) {};
                \node[circle, draw, inner sep=0pt, minimum size=3.5pt, label=below:$\scriptscriptstyle 5$] (n9) at (7, 0  ) {};
                \node[circle, draw, inner sep=0pt, minimum size=3.5pt, label=below:$\scriptscriptstyle 4$] (n10) at (8, 0  ) {};
                \foreach \x/\y in {1/9, 2/1}
                \draw [bend left=15.0, -stealth, shorten <= 2.50pt, shorten >= 2.50pt] (n\x) to  (n\y);
                \foreach \x/\y in {2/3, 3/4, 4/5, 5/6, 6/7, 7/8, 8/9}
                \draw [-stealth, shorten <= 2.50pt, shorten >= 2.50pt] (n\x) to  (n\y);
                \draw [-, shorten <= 2.50pt, shorten >= 2.50pt] (n9) to  (n10);
\end{tikzpicture}} & \multicolumn{4}{@{\tsep}c@{\tsep}}{$\CF_6\colon\!\!\!\!$
        \tikzsetnextfilename{indef_f6}\begin{tikzpicture}[baseline={([yshift=-2.75pt]current bounding box)},label distance=-2pt,
                xscale=0.52, yscale=0.5]
                \node[circle, draw, inner sep=0pt, minimum size=3.5pt, label=below:$\scriptscriptstyle \mppmss 7$] (n1) at (4, 1  ) {};
                \node[circle, draw, inner sep=0pt, minimum size=3.5pt, label=below:$\scriptscriptstyle 5$] (n2) at (0, 0  ) {};
                \node[circle, draw, inner sep=0pt, minimum size=3.5pt, label=below:$\scriptscriptstyle 5$] (n3) at (1, 0  ) {};
                \node[circle, draw, inner sep=0pt, minimum size=3.5pt, label=below:$\scriptscriptstyle \mppmss 2$] (n4) at (2, 0  ) {};
                \node[circle, draw, inner sep=0pt, minimum size=3.5pt, label=below:$\scriptscriptstyle \mppmss 2$] (n5) at (3, 0  ) {};
                \node[circle, draw, inner sep=0pt, minimum size=3.5pt, label=below:$\scriptscriptstyle \mppmss 2$] (n6) at (4, 0  ) {};
                \node[circle, draw, inner sep=0pt, minimum size=3.5pt, label=below:$\scriptscriptstyle \mppmss 2$] (n7) at (5, 0  ) {};
                \node[circle, draw, inner sep=0pt, minimum size=3.5pt, label=below:$\scriptscriptstyle \mppmss 2$] (n8) at (6, 0  ) {};
                \node[circle, draw, inner sep=0pt, minimum size=3.5pt, label=below:$\scriptscriptstyle \mppmss 2$] (n9) at (7, 0  ) {};
                \node[circle, draw, inner sep=0pt, minimum size=3.5pt, label=below:$\scriptscriptstyle 4$] (n10) at (8, 0  ) {};
                \draw [-, shorten <= 2.50pt, shorten >= 2.50pt] (n2) to  (n3);
                \foreach \x/\y in {3/4, 4/5, 5/6, 6/7, 7/8, 8/9, 9/10}
                \draw [-stealth, shorten <= 2.50pt, shorten >= 2.50pt] (n\x) to  (n\y);
                \foreach \x/\y in {1/10, 3/1}
                \draw [bend left=15.0, -stealth, shorten <= 2.50pt, shorten >= 2.50pt] (n\x) to  (n\y);
\end{tikzpicture}} & \multicolumn{4}{@{\tsep}l@{\tsep}}{$\CF_7\colon\!\!\!\!$
        \tikzsetnextfilename{indef_f7}\begin{tikzpicture}[baseline={([yshift=-2.75pt]current bounding box)},label distance=-2pt,
                xscale=0.52, yscale=0.5]
                \node[circle, draw, inner sep=0pt, minimum size=3.5pt, label=below:$\scriptscriptstyle \mppmss 7$] (n1) at (0, 0  ) {};
                \node[circle, draw, inner sep=0pt, minimum size=3.5pt, label=below:$\scriptscriptstyle 2$] (n2) at (1, 0  ) {};
                \node[circle, draw, inner sep=0pt, minimum size=3.5pt, label=below:$\scriptscriptstyle 2$] (n3) at (2, 0  ) {};
                \node[circle, draw, inner sep=0pt, minimum size=3.5pt, label=below:$\scriptscriptstyle 2$] (n4) at (3, 0  ) {};
                \node[circle, draw, inner sep=0pt, minimum size=3.5pt, label=below:$\scriptscriptstyle 2$] (n5) at (4, 0  ) {};
                \node[circle, draw, inner sep=0pt, minimum size=3.5pt, label=below:$\scriptscriptstyle 2$] (n6) at (5, 0  ) {};
                \node[circle, draw, inner sep=0pt, minimum size=3.5pt, label=below:$\scriptscriptstyle 2$] (n7) at (6, 0  ) {};
                \node[circle, draw, inner sep=0pt, minimum size=3.5pt, label=below:$\scriptscriptstyle \mppmss 7$] (n8) at (7, 0  ) {};
                \node[circle, draw, inner sep=0pt, minimum size=3.5pt, label=below:$\scriptscriptstyle 5$] (n9) at (3, 1  ) {};
                \node[circle, draw, inner sep=0pt, minimum size=3.5pt, label=below:$\scriptscriptstyle 4$] (n10) at (4, 1  ) {};
                \foreach \x/\y in {1/9, 10/8}
                \draw [bend left=15.0, -stealth, shorten <= 2.50pt, shorten >= 2.50pt] (n\x) to  (n\y);
                \foreach \x/\y in {1/2, 2/3, 3/4, 4/5, 5/6, 6/7, 7/8, 9/10}
                \draw [-stealth, shorten <= 2.50pt, shorten >= 2.50pt] (n\x) to  (n\y);
\end{tikzpicture}}\\[-1ex]
\caption{Indefinite posets}\label{tbl:indefposets}
\end{longtable}}
\end{lemma}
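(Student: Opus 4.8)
The plan is to reduce everything to \Cref{rmk:indef}. Since $q_I(e_1)=1>0$ holds for every poset $I$, it suffices, for each of the seven shapes $\CF_1,\dots,\CF_7$, to exhibit one vector $w\in\ZZ^n$ with $q_I(w)<0$. Such a vector is in fact already displayed: in \Cref{tbl:indefposets} the integer attached to each vertex of $\CF_j$ is the corresponding coordinate of the intended witness $w^{(j)}$, so the whole argument amounts to verifying the seven inequalities $q_I(w^{(j)})<0$.

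First I would write out the incidence quadratic form explicitly. From $q_I(x)=x\,G_I\,x^{tr}$ with $G_I=\tfrac{1}{2}(C_I+C_I^{tr})$ and $c_{ij}=1\iff i\preceq_I j$ one obtains
\[
q_I(x)=\sum_{i=1}^{n}x_i^2+\sum_{\substack{i\preceq_I j\\ i\neq j}}x_ix_j,
\]
where the second sum ranges over \emph{all} comparable pairs, i.e.\ over the arrows of the transitive closure $\overrightarrow{I}$ and not merely over the Hasse arrows of $\CH(I)$; this is the one place where a slip is easy, since a long oriented chain on $\ell$ vertices contributes $\binom{\ell}{2}$ cross-terms. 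I would also note that several of the shapes in \Cref{tbl:indefposets} are drawn with some (or, in $\CF_3,\CF_4$, all) edges unoriented: by \Cref{lemma:trees} in the tree cases and by \Cref{corr:ahchoredpath}\ref{corr:ahchoredpath:anyorient} for the anchored paths, the choice of orientation of these edges affects neither non-negativity, nor corank, nor indefiniteness, so one may fix the orientation for which $w^{(j)}$ is the listed witness before computing.

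It then remains to evaluate $q_I(w^{(j)})$ for $j=1,\dots,7$. For the small shapes $\CF_1,\CF_2$ this is a one-line expansion. For $\CF_3,\CF_4$ the labels are constant along each long arm, so the contribution of an arm collapses into a closed form (essentially $\binom{\text{arm length}}{2}$ times the square of the common label, plus the interaction with the branch vertex), and summing these against the remaining terms yields a negative value; $\CF_5,\CF_6,\CF_7$ are handled the same way, the ``long'' arcs joining the apex to the ends of the chain contributing a few further products. In every case the data in the table are arranged so that the value is a small negative integer, whence $q_I$ is indefinite and, by \Cref{rmk:indef}, so is $I$.

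The computations themselves are routine arithmetic; the only genuine care needed is (i) to use the full comparability relation --- not just the Hasse arrows --- when forming $q_I$, and (ii) to invoke \Cref{lemma:trees} / \Cref{corr:ahchoredpath}\ref{corr:ahchoredpath:anyorient} correctly so that the orientation ambiguity in the pictures is legitimately removed before substituting. With those two points settled, each inequality $q_I(w^{(j)})<0$ follows by direct evaluation, and this is exactly the step I expect to be the most tedious (though not conceptually hard) part of the proof.
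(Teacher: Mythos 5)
Your proposal matches the paper's proof: fix the orientations of the unoriented edges via \Cref{corr:ahchoredpath}\ref{corr:ahchoredpath:anyorient}, read the vertex labels in \Cref{tbl:indefposets} as the coordinates of a witness vector, check $q_I(w^{(j)})<0$ using the full comparability relation, and conclude by \Cref{rmk:indef}. The approach and the key cautionary points (transitive closure, orientation independence) are exactly those of the paper, so nothing further is needed.
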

\begin{proof}
\vspace*{-0.5ex}First, we recall that  an `unoriented edge' in a diagram means that both arrow orientations are permissible, see \Cref{remark:hassepath:anyorient}. In other words, diagrams $\CF_1,\ldots,\CF_{7}$ describe $777$ different, non-isomorphic posets. 

By \Cref{corr:ahchoredpath}\ref{corr:ahchoredpath:anyorient}, without loss of generality, we may assume that all unoriented edges in diagrams presented in \Cref{tbl:indefposets} are oriented ``from left to right''. That is,  the arcs in the Hasse digraph $\vec \CF_i$, where $i\in\{1,\ldots,7\}$ are oriented in such a way that:
\begin{itemize}
\item $\smash{\vec \CF_1}$ has three sinks,
\item $\smash{\vec \CF_2}$ has two sinks and
\item $\smash{\vec \CF_3,\ldots,\vec \CF_{7}}$ have exactly one sink. 
\end{itemize}

It is straightforward to  check that $q_{I_i}(v^i)<0$, where $I_i\eqdef I_{\vec \CF_i}$ is a poset defined by a digraph $\smash{\vec \CF_i\in\{\vec \CF_1, \ldots,\vec \CF_{7} \}}$ and $v^i\in\ZZ^n$ is an integer vector whose coordinates are given as the labels of vertices of diagram $\CF_i$ in \Cref{tbl:indefposets}. Therefore, by Remark \ref{rmk:indef}, we conclude that poset $I_i$ is indefinite. Since  isomorphic posets are weakly Gram $\ZZ$-congruent (the congruence is defined by a permutation matrix), it follows that every poset that has its Hasse digraph  isomorphic with $\CF_i$ is indefinite.  
\end{proof}

We need one more technical result to prove \Cref{thm:a:main}. In \Cref{lemma:pathext} we describe connected posets $I$ with the following property:  for some $p\in \{1,\ldots,n\}$ the graph $\ov\CH(I^{(p)})$ is isomorphic to a path graph.
\begin{lemma}\label{lemma:pathext}
Assume that $I$ is such a  connected poset, that for some $p\in \{1,\ldots,n\}$ the graph $\ov\CH(I^{(p)})$ is isomorphic to a path graph. Exactly one of the following conditions holds.
\begin{enumerate}[label=\normalfont{(\alph*)}]
\item\label{lemma:pathext:posit} $I$ is positive and:

\begin{enumerate}[label=\normalfont{(a\arabic*)}, leftmargin=3ex]
        \item\label{lemma:pathext:posit:a} $\Dyn_I=\AA_n$ with $\CH(I)\simeq \CA_n=$\tikzsetnextfilename{pth_an}\begin{tikzpicture}[baseline=(n7.base),label distance=-2pt,xscale=0.64, yscale=0.64]
                \node[circle, fill=black, inner sep=0pt, minimum size=3.5pt, label=below:$\scriptscriptstyle 1$] (n1) at (0, 0  ) {};
                \node[circle, fill=black, inner sep=0pt, minimum size=3.5pt, label=below:$\scriptscriptstyle 2$] (n2) at (1, 0  ) {};
                \node[circle, fill=black, inner sep=0pt, minimum size=3.5pt, label=below:$\scriptscriptstyle 3$] (n4) at (2, 0  ) {};
                \node[circle, fill=black, inner sep=0pt, minimum size=3.5pt, label=below:$\scriptscriptstyle n\mppmss 1$] (n5) at (5, 0  ) {};
                \node[circle, fill=black, inner sep=0pt, minimum size=3.5pt, label=below:$\scriptscriptstyle n$] (n6) at (6, 0  ) {};
                \node (n7) at (3, 0  ) {$\mathclap{\phantom{7}}$};
                \node (n8) at (4, 0  ) {};
                \foreach \x/\y in {1/2, 2/4, 5/6}
                \draw [-, shorten <= 2.50pt, shorten >= 2.50pt] (n\x) to  (n\y);
                \draw [-, shorten <= 2.50pt, shorten >= -2.50pt] (n4) to  (n7);
                \draw [line width=1.2pt, line cap=round, dash pattern=on 0pt off 5\pgflinewidth, -, shorten <= -0.50pt, shorten >= -2.50pt] (n7) to  (n8);
                \draw [-, shorten <= -2.50pt, shorten >= 2.50pt] (n8) to  (n5);
        \end{tikzpicture};
        
        \item\label{lemma:pathext:posit:d} $\Dyn_I=\DD_n$ and the Hasse digraph $\CH(I)\simeq \CD_I$ has a shape $\CD_I\in\{\CD_n^{[1]},\CD_{n,s}^{[2]},\CD_{n,s}^{[3]}\}$,
        
        \begin{center}
                \newcommand{\mxs}{0.59}
            \newcommand{\mys}{0.6}
                $\CD_n^{[1]}=$\!\!\!
                \tikzsetnextfilename{d1n}\begin{tikzpicture}[baseline=(nb.base),label distance=-2pt,xscale=\mxs, yscale=\mys]
                        \node[circle, fill=black, inner sep=0pt, minimum size=3.5pt, label=below:$\scriptscriptstyle 1$] (n1) at (0, 0  ) {};
                        \node[circle, fill=black, inner sep=0pt, minimum size=3.5pt, label=below:$\scriptscriptstyle 2$] (n2) at (1, 0  ) {};
                        \node[circle, fill=black, inner sep=0pt, minimum size=3.5pt, label=right:$\scriptscriptstyle 3$] (n3) at (1, 1  ) {};
                        \node (n4) at (2, 0  ) {};
                        \node[circle, fill=black, inner sep=0pt, minimum size=3.5pt, label=below:$\scriptscriptstyle n\mppmss 1$] (n5) at (4, 0  ) {};
                        \node[circle, fill=black, inner sep=0pt, minimum size=3.5pt, label=below:$\scriptscriptstyle n$] (n6) at (5, 0  ) {};
                        \node (n8) at (3, 0  ) {};
                        \node (nb) at (0.50, 0.50) {\phantom{7}};
                        \foreach \x/\y in {1/2, 2/3,  5/6}
                        \draw [-, shorten <= 2.50pt, shorten >= 2.50pt] (n\x) to  (n\y);
                        \draw [-, shorten <= 2.50pt, shorten >= -2.50pt] (n2) to  (n4);
                        \draw [line width=1.2pt, line cap=round, dash pattern=on 0pt off 4\pgflinewidth, -, shorten <= -0.50pt, shorten >= -2.50pt] (n4) to  (n8);
                        \draw [-, shorten <= -2.50pt, shorten >= 2.50pt] (n8) to  (n5);
                \end{tikzpicture}
                $\CD_{n,s}^{[2]}=$\!\!\!
                \tikzsetnextfilename{d2n}\begin{tikzpicture}[baseline=(n6.base),label distance=-2pt,
                        xscale=\mxs, yscale=\mys]
                        \node[circle, fill=black, inner sep=0pt, minimum size=3.5pt, label=below:$\scriptscriptstyle s$] (n1) at (3, 1  ) {};
                        \node[circle, fill=black, inner sep=0pt, minimum size=3.5pt, label=right:$\scriptscriptstyle s\mpppss 1$] (n2) at (4, 2  ) {};
                        \node[circle, fill=black, inner sep=0pt, minimum size=3.5pt, label=right:$\scriptscriptstyle s\mpppss 2$] (n3) at (4, 0  ) {};
                        \node[circle, fill=black, inner sep=0pt, minimum size=3.5pt, label=below:$\scriptscriptstyle 1$] (n5) at (1, 1  ) {};
                        \node (n6) at (2, 1  ) {$\mathclap{\phantom{7}}$};
                        
                        \node[circle, fill=black, inner sep=0pt, minimum size=3.5pt, label=below right:$\scriptscriptstyle s\mpppss 3$] (n4) at (5, 1  ) {};
                        \node (n9) at (6, 1  ) {};
                        \node[circle, fill=black, inner sep=0pt, minimum size=3.5pt, label=below:$\scriptscriptstyle n$] (n10) at (7, 1  ) {};
                        \foreach \x/\y in {1/2, 1/3, 2/4, 3/4}
                        \draw [-stealth, shorten <= 2.50pt, shorten >= 2.50pt] (n\x) to  (n\y);
                        \draw [-, shorten <= 2.50pt, shorten >= -2.50pt] (n5) to  (n6);
                        \draw [line width=1.2pt, line cap=round, dash pattern=on 0pt off 4\pgflinewidth, -, shorten <= -1.5pt, shorten >= -.50pt] (n6) to  (n1);
                        \draw [line width=1.2pt, line cap=round, dash pattern=on 0pt off 4\pgflinewidth, -, shorten <= 2.pt, shorten >= -1.50pt] (n4) to  (n9);
                        \foreach \x/\y in { 9/10}
                        \draw [-, shorten <= -2.50pt, shorten >= 2.50pt] (n\x) to  (n\y);
                \end{tikzpicture}
                $\CD_{n,s}^{[3]}=$\!\!\!\!\!  
                \tikzsetnextfilename{d3n}\begin{tikzpicture}[baseline=(nb.base),label distance=-2pt,
                        xscale=\mxs, yscale=\mys]
                        \node[circle, fill=black, inner sep=0pt, minimum size=3.5pt, label=above:$\scriptscriptstyle 1$] (n1) at (0, 1  ) {};
                        \node[circle, fill=black, inner sep=0pt, minimum size=3.5pt, label=above:$\scriptscriptstyle 2$] (n2) at (1, 1  ) {};
                        \node[circle, fill=black, inner sep=0pt, minimum size=3.5pt, label=above:$\scriptscriptstyle s$] (n3) at (4, 1  ) {};
                        \node[circle, fill=black, inner sep=0pt, minimum size=3.5pt, label=above:$\scriptscriptstyle n$] (n4) at (5, 1  ) {};
                        \node (n5) at (2, 1  ) {};
                        \node (n6) at (3, 1  ) {};
                        \node[circle, fill=black, inner sep=0pt, minimum size=3.5pt, label=below:$\scriptscriptstyle s\mpppss 1$] (n7) at (0, 0  ) {};
                        \node[circle, fill=black, inner sep=0pt, minimum size=3.5pt, label=below:$\scriptscriptstyle s\mpppss 2$] (n8) at (1, 0  ) {};
                        \node[circle, fill=black, inner sep=0pt, minimum size=3.5pt, label=below:$\scriptscriptstyle n\mppmss 1$] (n9) at (4, 0  ) {};
                        \node (n10) at (2, 0  ) {};
                        \node (n11) at (3, 0  ) {};
                        \node (nb) at (0.50, .50) {\phantom{7}};
                        \foreach \x/\y in {1/2, 3/4, 9/4}
                        \draw [-stealth, shorten <= 2.50pt, shorten >= 2.50pt] (n\x) to  (n\y);
                        \draw [-stealth, shorten <= 2.50pt, shorten >= -2.50pt] (n2) to  (n5);
                        \foreach \x/\y in {5/6, 10/11}
                        \draw [line width=1.2pt, line cap=round, dash pattern=on 0pt off 4\pgflinewidth, -, shorten <= -0.50pt, shorten >= -2.50pt] (n\x) to  (n\y);
                        \draw [-stealth, shorten <= -2.50pt, shorten >= 2.50pt] (n6) to  (n3);
                        \draw [-, shorten <= 2.50pt, shorten >= 2.50pt] (n7) to  (n8);
                        \draw [-, shorten <= 2.50pt, shorten >= -2.50pt] (n8) to  (n10);
                        \draw [-, shorten <= -2.50pt, shorten >= 2.50pt] (n11) to  (n9);
                        \draw [-stealth, shorten <= 3.50pt, shorten >= 1.90pt] ([yshift=-0.5]n1.south east) to  ([yshift=1.5]n9.north west);
                \end{tikzpicture} 
        \end{center}
        where $n\geq 4$ for $\CD_{I}=\CD_n^{[1]}$; $n\geq 4$, $s\geq 1$ for $\CD_{n,s}^{[2]}$, and $n\geq 5$, $s\geq 2$ for $\CD_{n,s}^{[3]}$.
        
        \item\label{lemma:pathext:posit:e} $\Dyn_I=\EE_n$, where $n\in\{6,7,8\}$, and the Hasse digraph $\CH(I)$, up to isomorphism, is one of  $498$ digraphs \textnormal{[}$86$~up to orientation of hanging paths, see \Cref{corr:ahchoredpath}\ref{corr:ahchoredpath:anyorient}\textnormal{]}, i.e.,  there are  $38, 145, 315$ \textnormal{[}$11, 30, 45$\textnormal{]} digraphs with $\Dyn_I=\EE_6, \EE_7$ and $\EE_8$, respectively. In particular, Hasse digraphs $\CH(I)$ of all posets $I$ with $\Dyn_I=\EE_6$ are depicted below.
        \begin{center}
                {\newcommand{\mxscale}{0.65}
                \newcommand{\myscale}{0.55}
                \hfill%
                \tikzsetnextfilename{p_e6_1}\begin{tikzpicture}[baseline={([yshift=-2.75pt]current bounding box)},label distance=-2pt,xscale=\mxscale, yscale=\myscale]
                        \node[circle, fill=black, inner sep=0pt, minimum size=3.5pt] (n1) at (0, 1  ) {};
                        \node[circle, fill=black, inner sep=0pt, minimum size=3.5pt] (n2) at (1, 2  ) {};
                        \node[circle, fill=black, inner sep=0pt, minimum size=3.5pt] (n3) at (1, 1  ) {};
                        \node[circle, fill=black, inner sep=0pt, minimum size=3.5pt] (n4) at (1, 0  ) {};
                        \node[circle, fill=black, inner sep=0pt, minimum size=3.5pt] (n5) at (2, 2  ) {};
                        \node[circle, fill=black, inner sep=0pt, minimum size=3.5pt] (n6) at (2, 1  ) {};
                        \foreach \x/\y in {1/2, 1/3, 1/4, 2/5, 3/6}
                        \draw [-, shorten <= 2.50pt, shorten >= 2.50pt] (n\x) to  (n\y);
                \end{tikzpicture} \hfill 
                \tikzsetnextfilename{p_e6_2}\begin{tikzpicture}[baseline={([yshift=-2.75pt]current bounding box)},label distance=-2pt,xscale=\mxscale, yscale=\myscale]
                        \node[circle, fill=black, inner sep=0pt, minimum size=3.5pt] (n1) at (0, 1  ) {};
                        \node[circle, fill=black, inner sep=0pt, minimum size=3.5pt] (n2) at (1, 2  ) {};
                        \node[circle, fill=black, inner sep=0pt, minimum size=3.5pt] (n3) at (1, 0  ) {};
                        \node[circle, fill=black, inner sep=0pt, minimum size=3.5pt] (n4) at (2, 2  ) {};
                        \node[circle, fill=black, inner sep=0pt, minimum size=3.5pt] (n5) at (2, 1  ) {};
                        \node[circle, fill=black, inner sep=0pt, minimum size=3.5pt] (n6) at (2, 0  ) {};
                        \foreach \x/\y in {1/2, 1/3, 2/5, 3/5}
                        \draw [-stealth, shorten <= 2.50pt, shorten >= 2.50pt] (n\x) to  (n\y);
                        \foreach \x/\y in {2/4, 3/6}
                        \draw [-, shorten <= 2.50pt, shorten >= 2.50pt] (n\x) to  (n\y);
                \end{tikzpicture} \hfill
                \tikzsetnextfilename{p_e6_3}\begin{tikzpicture}[baseline={([yshift=-2.75pt]current bounding box)},label distance=-2pt,xscale=\mxscale, yscale=\myscale]
                        \node[circle, fill=black, inner sep=0pt, minimum size=3.5pt] (n1) at (0, 1  ) {};
                        \node[circle, fill=black, inner sep=0pt, minimum size=3.5pt] (n2) at (1, 1  ) {};
                        \node[circle, fill=black, inner sep=0pt, minimum size=3.5pt] (n3) at (1, 2  ) {};
                        \node[circle, fill=black, inner sep=0pt, minimum size=3.5pt] (n4) at (1, 0  ) {};
                        \node[circle, fill=black, inner sep=0pt, minimum size=3.5pt] (n5) at (2, 1  ) {};
                        \node[circle, fill=black, inner sep=0pt, minimum size=3.5pt] (n6) at (0, 2  ) {};
                        \foreach \x/\y in {1/2, 1/3, 2/5, 3/5, 6/3}
                        \draw [-stealth, shorten <= 2.50pt, shorten >= 2.50pt] (n\x) to  (n\y);
                        \draw [-, shorten <= 2.50pt, shorten >= 2.50pt] (n1) to  (n4);
                \end{tikzpicture} \hfill
                \tikzsetnextfilename{p_e6_4}\begin{tikzpicture}[baseline={([yshift=-2.75pt]current bounding box)},label distance=-2pt,xscale=\mxscale, yscale=\myscale]
                        \node[circle, fill=black, inner sep=0pt, minimum size=3.5pt] (n1) at (0, 1  ) {};
                        \node[circle, fill=black, inner sep=0pt, minimum size=3.5pt] (n2) at (1, 2  ) {};
                        \node[circle, fill=black, inner sep=0pt, minimum size=3.5pt] (n3) at (2, 1  ) {};
                        \node[circle, fill=black, inner sep=0pt, minimum size=3.5pt] (n4) at (3, 1  ) {};
                        \node[circle, fill=black, inner sep=0pt, minimum size=3.5pt] (n5) at (1, 0  ) {};
                        \node[circle, fill=black, inner sep=0pt, minimum size=3.5pt] (n6) at (2, 0  ) {};
                        \foreach \x/\y in {1/2, 1/5, 2/3, 5/3, 5/6}
                        \draw [-stealth, shorten <= 2.50pt, shorten >= 2.50pt] (n\x) to  (n\y);
                        \draw [-, shorten <= 2.50pt, shorten >= 2.50pt] (n3) to  (n4);
                \end{tikzpicture} \hfill
                \tikzsetnextfilename{p_e6_5}\begin{tikzpicture}[baseline={([yshift=-2.75pt]current bounding box)},label distance=-2pt,xscale=\mxscale, yscale=\myscale]
                        \node[circle, fill=black, inner sep=0pt, minimum size=3.5pt] (n1) at (0, 1  ) {};
                        \node[circle, fill=black, inner sep=0pt, minimum size=3.5pt] (n2) at (1, 2  ) {};
                        \node[circle, fill=black, inner sep=0pt, minimum size=3.5pt] (n3) at (1, 1  ) {};
                        \node[circle, fill=black, inner sep=0pt, minimum size=3.5pt] (n4) at (1, 0  ) {};
                        \node[circle, fill=black, inner sep=0pt, minimum size=3.5pt] (n5) at (2, 1.50) {};
                        \node[circle, fill=black, inner sep=0pt, minimum size=3.5pt] (n6) at (2, 0.50) {};
                        \foreach \x/\y in {1/2, 1/3, 1/4, 2/5, 3/5, 3/6, 4/6}
                        \draw [-stealth, shorten <= 2.50pt, shorten >= 2.50pt] (n\x) to  (n\y);
                \end{tikzpicture} \hfill 
                \tikzsetnextfilename{p_e6_6}\begin{tikzpicture}[baseline={([yshift=-2.75pt]current bounding box)},label distance=-2pt,xscale=\mxscale, yscale=\myscale]
                        \node[circle, fill=black, inner sep=0pt, minimum size=3.5pt] (n1) at (0, 1.50) {};
                        \node[circle, fill=black, inner sep=0pt, minimum size=3.5pt] (n2) at (0, 0.50) {};
                        \node[circle, fill=black, inner sep=0pt, minimum size=3.5pt] (n3) at (1, 2  ) {};
                        \node[circle, fill=black, inner sep=0pt, minimum size=3.5pt] (n4) at (1, 1  ) {};
                        \node[circle, fill=black, inner sep=0pt, minimum size=3.5pt] (n5) at (1, 0  ) {};
                        \node[circle, fill=black, inner sep=0pt, minimum size=3.5pt] (n6) at (2, 1  ) {};
                        \foreach \x/\y in {1/3, 1/4, 2/4, 2/5, 3/6, 4/6, 5/6}
                        \draw [-stealth, shorten <= 2.50pt, shorten >= 2.50pt] (n\x) to  (n\y);
                \end{tikzpicture}\hfill\mbox{} \\[0.34cm]
               \hfill 
               \tikzsetnextfilename{p_e6_7}\begin{tikzpicture}[baseline={([yshift=-2.75pt]current bounding box)},label distance=-2pt,xscale=\mxscale, yscale=\myscale]
                        \node[circle, fill=black, inner sep=0pt, minimum size=3.5pt] (n1) at (0, 1  ) {};
                        \node[circle, fill=black, inner sep=0pt, minimum size=3.5pt] (n2) at (1, 1  ) {};
                        \node[circle, fill=black, inner sep=0pt, minimum size=3.5pt] (n3) at (2, 1  ) {};
                        \node[circle, fill=black, inner sep=0pt, minimum size=3.5pt] (n4) at (3, 1  ) {};
                        \node[circle, fill=black, inner sep=0pt, minimum size=3.5pt] (n5) at (1.50, 2  ) {};
                        \node[circle, fill=black, inner sep=0pt, minimum size=3.5pt] (n6) at (1, 0  ) {};
                        \foreach \x/\y in {1/2, 1/5, 2/3, 3/4, 5/4}
                        \draw [-stealth, shorten <= 2.50pt, shorten >= 2.50pt] (n\x) to  (n\y);
                        \draw [-, shorten <= 2.50pt, shorten >= 2.50pt] (n1) to  (n6);
                \end{tikzpicture} \hfill 
                \tikzsetnextfilename{p_e6_8}\begin{tikzpicture}[baseline={([yshift=-2.75pt]current bounding box)},label distance=-2pt,xscale=\mxscale, yscale=\myscale]
                        \node[circle, fill=black, inner sep=0pt, minimum size=3.5pt] (n1) at (0, 1  ) {};
                        \node[circle, fill=black, inner sep=0pt, minimum size=3.5pt] (n2) at (1, 1  ) {};
                        \node[circle, fill=black, inner sep=0pt, minimum size=3.5pt] (n3) at (2, 1  ) {};
                        \node[circle, fill=black, inner sep=0pt, minimum size=3.5pt] (n4) at (3, 1  ) {};
                        \node[circle, fill=black, inner sep=0pt, minimum size=3.5pt] (n5) at (1.50, 2  ) {};
                        \node[circle, fill=black, inner sep=0pt, minimum size=3.5pt] (n6) at (2, 0  ) {};
                        \foreach \x/\y in {1/2, 1/5, 2/3, 3/4, 5/4}
                        \draw [-stealth, shorten <= 2.50pt, shorten >= 2.50pt] (n\x) to  (n\y);
                        \draw [-, shorten <= 2.50pt, shorten >= 2.50pt] (n2) to  (n6);
                \end{tikzpicture} \hfill 
                \tikzsetnextfilename{p_e6_9}\begin{tikzpicture}[baseline={([yshift=-2.75pt]current bounding box)},label distance=-2pt,xscale=\mxscale, yscale=\myscale]
                        \node[circle, fill=black, inner sep=0pt, minimum size=3.5pt] (n1) at (0, 1  ) {};
                        \node[circle, fill=black, inner sep=0pt, minimum size=3.5pt] (n2) at (1, 1  ) {};
                        \node[circle, fill=black, inner sep=0pt, minimum size=3.5pt] (n3) at (2, 1  ) {};
                        \node[circle, fill=black, inner sep=0pt, minimum size=3.5pt] (n4) at (3, 1  ) {};
                        \node[circle, fill=black, inner sep=0pt, minimum size=3.5pt] (n5) at (1.50, 2  ) {};
                        \node[circle, fill=black, inner sep=0pt, minimum size=3.5pt] (n6) at (1, 0  ) {};
                        \foreach \x/\y in {1/2, 1/5, 2/3, 3/4, 5/4}
                        \draw [-stealth, shorten <= 2.50pt, shorten >= 2.50pt] (n\x) to  (n\y);
                        \draw [-, shorten <= 2.50pt, shorten >= 2.50pt] (n3) to  (n6);
                \end{tikzpicture} \hfill 
                \tikzsetnextfilename{p_e6_10}\begin{tikzpicture}[baseline={([yshift=-2.75pt]current bounding box)},label distance=-2pt,xscale=\mxscale, yscale=\myscale]
                        \node[circle, fill=black, inner sep=0pt, minimum size=3.5pt] (n1) at (0, 1  ) {};
                        \node[circle, fill=black, inner sep=0pt, minimum size=3.5pt] (n2) at (1, 1  ) {};
                        \node[circle, fill=black, inner sep=0pt, minimum size=3.5pt] (n3) at (2, 1  ) {};
                        \node[circle, fill=black, inner sep=0pt, minimum size=3.5pt] (n4) at (3, 1  ) {};
                        \node[circle, fill=black, inner sep=0pt, minimum size=3.5pt] (n5) at (1.50, 2  ) {};
                        \node[circle, fill=black, inner sep=0pt, minimum size=3.5pt] (n6) at (2, 0  ) {};
                        \foreach \x/\y in {1/2, 1/5, 2/3, 3/4, 5/4}
                        \draw [-stealth, shorten <= 2.50pt, shorten >= 2.50pt] (n\x) to  (n\y);
                        \draw [-, shorten <= 2.50pt, shorten >= 2.50pt] (n4) to  (n6);
                \end{tikzpicture}\hfill 
                \tikzsetnextfilename{p_e6_11}\begin{tikzpicture}[baseline={([yshift=-2.75pt]current bounding box)},label distance=-2pt,xscale=\mxscale, yscale=\myscale]
                        \node[circle, fill=black, inner sep=0pt, minimum size=3.5pt] (n1) at (0, 1  ) {};
                        \node[circle, fill=black, inner sep=0pt, minimum size=3.5pt] (n2) at (1, 2  ) {};
                        \node[circle, fill=black, inner sep=0pt, minimum size=3.5pt] (n3) at (2, 2  ) {};
                        \node[circle, fill=black, inner sep=0pt, minimum size=3.5pt] (n4) at (3, 1  ) {};
                        \node[circle, fill=black, inner sep=0pt, minimum size=3.5pt] (n5) at (1, 0  ) {};
                        \node[circle, fill=black, inner sep=0pt, minimum size=3.5pt] (n6) at (2, 0  ) {};
                        \foreach \x/\y in {1/2, 1/5, 2/3, 3/4, 5/6, 6/4}
                        \draw [-stealth, shorten <= 2.50pt, shorten >= 2.50pt] (n\x) to  (n\y);
                \end{tikzpicture}\hfill\mbox{}
        }\end{center}
\end{enumerate}
We note that, up to isomorphism, the first Hasse digraph describes exactly $20$ posets and the second exactly $3$ posets.
\item\label{lemma:pathext:princ} $I$ is principal and:
\begin{enumerate}[label=\normalfont{(b\arabic*)}, leftmargin=3ex]
    \item\label{lemma:pathext:princ:a} $\Dyn_I=\AA_{n-1}$, $\ov\CH(I)$ is a cycle graph and $\CH(I)$ has at least two sinks.
    \item\label{lemma:pathext:princ:e} $\Dyn_I=\EE_{n-1}$, where $n\in\{8,9\}$, and the Hasse digraph $\CH(I)$, up to isomorphism, is one of  $850$ \textnormal{digraphs [}$98$~up to orientation of hanging paths\textnormal{]}, i.e.,  there are exactly  $185, 665$ \textnormal{[}$36,62$\textnormal{]} digraphs with $\Dyn_I= \EE_7$ and $\EE_8$, respectively.
\end{enumerate}
\item\label{lemma:pathext:indef} $I$ is indefinite.\pagebreak
\end{enumerate}
\end{lemma}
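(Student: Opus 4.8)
The plan is to dispose of the indefinite alternative, reduce to a trichotomy positive/principal/indefinite, and then read off the admissible shapes of $Q\eqdef\CH(I)$ from the three classification results already in hand.

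\textbf{Set-up and trichotomy.} Write $I^{(p)}\eqdef I\setminus\{p\}$. Since $\ov{\CH(I^{(p)})}\simeq\AA_{n-1}$ is a path, $\CH(I^{(p)})$ is a tree, so by \Cref{lemma:trees}\ref{lemma:trees:posit} the poset $I^{(p)}$ is positive with $\Dyn_{I^{(p)}}=\AA_{n-1}$. As $q_I(e_1)=(G_I)_{11}=1>0$, the form $q_I$ is never negative (semi)definite, hence $I$ is indefinite --- in which case \ref{lemma:pathext:indef} holds --- or non-negative of some corank $r=\crk_I$. In the non-negative case, deleting the coordinate $p$ restricts $q_I$ to the coordinate hyperplane $\{x_p=0\}$, so $\Ker q_{I^{(p)}}=\Ker q_I\cap\{x_p=0\}$ (under the obvious identification) has $\ZZ$-rank at least $r-1$; since $\crk_{I^{(p)}}=0$ this forces $r\leq 1$, i.e.\ $I$ is positive (case \ref{lemma:pathext:posit}) or principal (case \ref{lemma:pathext:princ}). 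These three alternatives are pairwise exclusive by definition, and \Cref{df:Dynkin_type} together with \Cref{fact:specialzbasis}\ref{fact:specialzbasis:positsubbigraph} gives $\Dyn_I\in\{\AA_{n-r},\DD_{n-r},\EE_6,\EE_7,\EE_8\}$, an $\EE$-type forcing $n-r\leq 8$.

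\textbf{Admissible shapes of $Q=\CH(I)$.} The elementary observation is that deleting $p$ creates no new intermediate points, so every arc of $Q$ not incident to $p$ is again an arc of $\CH(I^{(p)})$; hence $Q\setminus\{p\}$ is a sub-forest of the path $\CH(I^{(p)})$ and $\deg_Q(v)\leq 3$ for all $v\neq p$. I would then branch on the coarse shape of $\ov Q$. \emph{(i)} If $\deg_Q(v)\leq 2$ for all $v$, then $\ov Q$ is a path or a cycle and \Cref{thm:quiverdegmax2} is exactly the required classification: a path yields \ref{lemma:pathext:posit:a}; a cycle with at least two sinks yields \ref{lemma:pathext:princ:a}; a cycle with a single sink is, by \Cref{thm:quiverdegmax2}\ref{thm:quiverdegmax2:cycle:onesink}, either $\bAA_{n-1,1}$ (impossible here, since then $\CH(I_{\bAA_{n-1,1}})\neq\bAA_{n-1,1}$ whereas $Q=\CH(I)$ is a transitive reduction), or $\bAA_{n-2,2}$ --- a $\CD_{n,s}^{(2)}$-shape, case \ref{lemma:pathext:posit:d} --- or one of finitely many $\bAA_{p,3}$ of $\EE$-type (cases \ref{lemma:pathext:posit:e} and \ref{lemma:pathext:princ:e}), or an indefinite one (case \ref{lemma:pathext:indef}). \emph{(ii)} If $\deg_Q(v)\geq 3$ for some $v$ and $Q$ is a tree, \Cref{lemma:trees} applies verbatim: the positive case gives $\ov Q\in\{\DD_n,\EE_6,\EE_7,\EE_8\}$ (not $\AA_n$, which is a path), i.e.\ the shape $\CD_n^{(1)}$ in \ref{lemma:pathext:posit:d} or a tree-shaped $\EE$ in \ref{lemma:pathext:posit:e}; the corank-$1$ case gives a Euclidean tree, and the hypothesis that some single-vertex deletion of $\ov Q$ is a path rules out $\widetilde\DD_n$ and $\widetilde\EE_6$ but leaves $\widetilde\EE_7$ and $\widetilde\EE_8$, giving \ref{lemma:pathext:princ:e} with $n\in\{8,9\}$. \emph{(iii)} If $\deg_Q(v)\geq 3$ for some $v$ and $Q$ is not a tree, then $\ov Q$ carries both a cycle and a branch vertex; here one sieves with \Cref{lemma:posindef}, testing (and invoking \Cref{corr:ahchoredpath}\ref{corr:ahchoredpath:anyorient} to disregard orientations of hanging paths) whether $\ov Q$ contains one of $\CF_1,\dots,\CF_7$. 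This eliminates every configuration with a long tail on a cycle, with a long tail on a branch adjacent to a cycle, or with two suitably placed branch vertices, leaving a finite list of graphs of order at most $9$ --- the $\CD_{n,s}^{(2)}$, $\CD_{n,s}^{(3)}$ shapes and the small $\EE$-type shapes containing a $4$-cycle --- for each of which the exact Dynkin type and corank is a finite check, carried out via the inflation algorithm and the explicit lists of \cite{gasiorekOnepeakPosetsPositive2012,gasiorekAlgorithmicStudyNonnegative2015}; this sorts them into \ref{lemma:pathext:posit:d}, \ref{lemma:pathext:posit:e}, \ref{lemma:pathext:princ:e}, or \ref{lemma:pathext:indef}.

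\textbf{Main obstacle.} The bulk of the work lies in steps (ii)--(iii): pinning down exactly which branch-containing shapes survive the $\CF_i$-sieve, and then establishing over all their orientations the precise counts of $\EE_6,\EE_7,\EE_8$ Hasse digraphs (the numbers $38,145,315$ and $185,665$, and their $11,30,45$ and $36,62$ hanging-path-reduced counterparts) --- a bounded but substantial finite enumeration. A persistent subtlety is that $\CH(I)$, $\CH(I\setminus\{p\})$ and $\CH(I_Q)$ for an auxiliary digraph $Q$ need not coincide (transitive reduction), so each use of \Cref{thm:quiverdegmax2} (stated for $I_Q$) must be accompanied by a verification that the digraph at hand is genuinely a Hasse digraph, i.e.\ carries no redundant arc.
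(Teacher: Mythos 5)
Your architecture is genuinely different from the paper's: you first pin down the trichotomy (positive/principal/indefinite) by observing that $\Ker q_{I\setminus\{p\}}=\Ker q_I\cap\{x_p=0\}$ has rank at least $\crk_I-1$, so positivity of $I\setminus\{p\}$ (via \Cref{lemma:trees}) forces $\crk_I\leq 1$; you then sort the shapes of $\CH(I)$ by a direct structural split (max degree $\leq 2$ / tree with a branch / non-tree with a branch). That opening argument is correct and is cleaner than anything in the paper at this point --- the paper obtains the corank bound only as an output of its induction. Your cases (i) and (ii) are essentially sound (modulo checking, as you do, that the one-sink cycle $\bAA_{n-1,1}$ is excluded because it is not a transitive reduction, and that $\widetilde\DD_n,\widetilde\EE_6$ cannot lose all branch vertices by deleting one point). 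By contrast, the paper proceeds by exhaustive computer enumeration of all posets of size $\leq 11$ followed by induction on $n>11$, removing the endpoint $n$ of the path $\CH(I\setminus\{1\})$ and analysing the finitely many ways the deleted vertex can re-attach to the (inductively known) shape of $\CH(I\setminus\{n\})$.

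The genuine gap is in your case (iii). The candidate digraphs there form an infinite family: $Q\setminus\{p\}$ is a subforest of a path and $p$ may attach to it in ways parametrised by arbitrarily long tails, so the assertion that the $\CF_1,\dots,\CF_7$-sieve ``eliminates every configuration with a long tail on a cycle, \dots, leaving a finite list'' is exactly the combinatorial content of the lemma, and you give no mechanism that reduces it to finitely many checks. (It is also internally inconsistent as stated: the surviving families $\CD_{n,s}^{(2)},\CD_{n,s}^{(3)}$ have unbounded order, so the survivors are not ``graphs of order at most $9$''; what you need is that every sufficiently large $Q$ in case (iii) either lies in one of these two families or contains some $\CF_i$ of \Cref{tbl:indefposets}.) The paper's induction is precisely the device supplying this finiteness: once $\CH(I\setminus\{n\})$ is known to be $\CA_{n-1}$, $\CD_{n-1}^{(1)}$, $\CD_{n-1,s}^{(2)}$, $\CD_{n-1,s}^{(3)}$, $2$-regular, or indefinite, only a bounded number of attachments of the new vertex must be examined, and each bad one is shown to contain a $\CF_i$ (the worked cases $(3^\circ a)$--$(3^\circ d)$). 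You would need either to adopt such an induction, or to prove directly a statement of the form ``if $Q$ has a cycle, a branch vertex, and is not of shape $\CD_{n,s}^{(2)}$ or $\CD_{n,s}^{(3)}$, then $I_Q$ contains some $\CF_i$'' --- neither is present. Finally, the exceptional counts in \ref{lemma:pathext:posit:e} and \ref{lemma:pathext:princ:e} ($38,145,315$ and $185,665$) cannot be read off from any of the cited results and require the bounded enumeration that the paper performs by computer for $|I|\leq 11$; your proposal correctly flags this but supplies no substitute.
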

\begin{proof}
Assume that $I$ is a connected poset of size $n$. By \Cref{thm:onepeak_posit} and \Cref{corr:ahchoredpath}\ref{corr:ahchoredpath:anyorient}, we know that posets $I$ with $\ov\CH(I)$ isomorphic to $\CA_n,\ab\CD_n^{[1]},\ab\CD_{n,s}^{[2]},\ab\CD_{n,s}^{[3]}$ are positive, with  $\Dyn_I=\AA_n$ if $\ov\CH(I)\simeq \CA_n$ and $\Dyn_I=\DD_n$ otherwise. Furthermore, \Cref{thm:digraphdegmax2}\ref{thm:digraphdegmax2:cycle:multsink} asserts that posets $I$ with $\ov \CH(I)$ isomorphic to a cycle graph  and $\CH(I)$ having at least two sinks are principal with $\Dyn_I=\AA_{n-1}$.\medskip

The proof is divided into two parts. First, we prove the thesis by analyzing all posets having at most $11$ elements. Then, using induction, we prove it for posets $I$ of size $|I|>11$.\smallskip

\textbf{Part $1^\circ$} It is easy to see that all connected  posets $I$ of size $n\leq 3$ satisfy the assumptions: in this case $\CH(I)\in\{\!\!
\tikzsetnextfilename{main_prf_1}\begin{tikzpicture}[baseline={([yshift=-2.75pt]current bounding box)},label distance=-2pt,xscale=0.65, yscale=0.74]
        \node[circle, fill=black, inner sep=0pt, minimum size=3.5pt, label=left:$\scriptstyle 1$] (n1) at (0  , 0  ) {};
        \node[circle, fill=black, inner sep=0pt, minimum size=3.5pt, label=right:$\scriptstyle 2$] (n2) at (1  , 0  ) {};
        \draw [-stealth, shorten <= 2.50pt, shorten >= 2.50pt] (n1) to  (n2);
\end{tikzpicture}\!\!,\!
\tikzsetnextfilename{main_prf_2}\begin{tikzpicture}[baseline={([yshift=-2.75pt]current bounding box)},label distance=-2pt,xscale=0.65, yscale=0.74]
        \node[circle, fill=black, inner sep=0pt, minimum size=3.5pt, label=left:$\scriptstyle 1$] (n1) at (0  , 0  ) {};
        \node[circle, fill=black, inner sep=0pt, minimum size=3.5pt, label=right:$\scriptstyle 2$] (n2) at (1  , 0  ) {};
        \node[circle, fill=black, inner sep=0pt, minimum size=3.5pt, label=right:$\scriptstyle 3$] (n3) at (2.35, 0  ) {};
        \draw [-stealth, shorten <= 2.50pt, shorten >= 2.50pt] (n1) to  (n2);
        \draw [-stealth, shorten <= 2.50pt, shorten >= 9.00pt] (n3) to  (n2);
\end{tikzpicture}\!\!,\!
\tikzsetnextfilename{main_prf_3}\begin{tikzpicture}[baseline={([yshift=-2.75pt]current bounding box)},label distance=-2pt,xscale=0.65, yscale=0.74]
        \node[circle, fill=black, inner sep=0pt, minimum size=3.5pt, label=left:$\scriptstyle 1$] (n1) at (0  , 0  ) {};
        \node[circle, fill=black, inner sep=0pt, minimum size=3.5pt, label=right:$\scriptstyle 2$] (n2) at (1  , 0  ) {};
        \node[circle, fill=black, inner sep=0pt, minimum size=3.5pt, label=right:$\scriptstyle 3$] (n3) at (2.35, 0  ) {};
        \draw [-stealth, shorten <= 2.50pt, shorten >= 2.50pt] (n2) to  (n1);
        \draw [-stealth, shorten <= 9.00pt, shorten >= 2.50pt] (n2) to  (n3);
\end{tikzpicture}\!\!,\!
\tikzsetnextfilename{main_prf_4}\begin{tikzpicture}[baseline={([yshift=-2.75pt]current bounding box)},label distance=-2pt,xscale=0.65, yscale=0.74]
        \node[circle, fill=black, inner sep=0pt, minimum size=3.5pt, label=left:$\scriptstyle 1$] (n1) at (0  , 0  ) {};
        \node[circle, fill=black, inner sep=0pt, minimum size=3.5pt, label=right:$\scriptstyle 2$] (n2) at (1  , 0  ) {};
        \node[circle, fill=black, inner sep=0pt, minimum size=3.5pt, label=right:$\scriptstyle 3$] (n3) at (2.35, 0  ) {};
        \draw [-stealth, shorten <= 2.50pt, shorten >= 2.50pt] (n1) to  (n2);
        \draw [-stealth, shorten <= 9.00pt, shorten >= 2.50pt] (n2) to  (n3);
\end{tikzpicture}\!\!
\}$. Since $\ov\CH(I^{(1)})$ is isomorphic to a path graph, $\Dyn_I=\AA_{n}$ and the thesis follows. Now, using Computer Algebra System (e.g. SageMath, Maple), we  compute all (up to isomorphism) posets $I$ of size at most $11$ using a suitably modified version of~\cite[Algorithm 7.1]{gasiorekOnepeakPosetsPositive2012} (see also~\cite{brinkmannPosets16Points2002} for a different approach). There are exactly $\num{49519383}$ [$\num{46485488}$ connected] posets $I$ of size $4\leq |I| \leq 11$.  Moreover, $\num{58723}$ [$\num{58198}$ connected] posets $I$ have the graph $\ov\CH(I^{(p)})$ isomorphic to a path graph, for some $p\in I$. 

In particular, there are $\num{46749427}$ [$\num{43944974}$ connected] non-isomorphic posets of size $11$. In $\num{39335}$ [$\num{39079}$] cases,  for some $p\in \{1,\ldots,11\}$, the graph $\ov\CH(I^{(p)})$ is isomorphic to a path graph and, up to isomorphism, there are exactly:
\begin{itemize}
	\item $256$ disconnected positive posets $I$ with $\CH(I)\simeq \,  \scriptstyle \bullet\hspace{22pt}\bullet\,\rule[1.5pt]{22pt}{0.4pt}\,\bullet\,\rule[1.5pt]{22pt}{0.4pt}\,\,\hdashrule[1.5pt]{12pt}{0.4pt}{1pt}\,\rule[1.5pt]{22pt}{0.4pt}\,\bullet$,
    \item $\num[group-minimum-digits = 4]{2575}$  connected positive posets $I$, of which $528$, $768$, $1024$ and $255$ have its Hasse digraph $\CH(I)$ isomorphic to $\CA_n$,  $\CD_n^{[1]}$, $\CD_{n,s}^{[2]}$, and $\CD_{n,s}^{[3]}$, respectively;
    \item $88$ connected principal posets $I$, where $\ov\CH(I)$ is a cycle graph and $\CH(I)$ has at least two sinks;
    \item $\num{36416}$ connected indefinite posets.
\end{itemize}

A more precise analysis of connected posets $I$ is given in the following table.

{\sisetup{group-minimum-digits=4}%
\begin{longtable}{lrrrrrrrrrrr}\toprule
    &  & \multicolumn{5}{c}{positive} & \multicolumn{2}{c}{principal} & indefinite
    \\\cmidrule(lr){3-7}\cmidrule(lr){8-9}\cmidrule(ll){10-12}
    $n$& $\# I$ & $\CA_n$ & $\CD_n^{[1]}$ & $\CD_{n,s}^{[2]}$ & $\CD_{n,s}^{[3]}$  & $\EE_n$ & $\AA_{n-1}$ & $\EE_{n-1}$ & $\# I$ \\\midrule
    $4$ & $10$ & $4$ & $4$ & $1$ &  &  & $1$ &  & \\
    $5$ & $34$ & $10$ & $12$ & $4$ & $3$ &  & $1$ &  & $4$\\
    $6$ & $129$ & $16$ & $24$ & $12$ & $7$ & $38$ & $5$ &  & $27$\\
    $7$ & $413$ & $36$ & $48$ & $32$ & $15$ & $145$ & $6$ &  & $131$\\
    $8$ & $\num{1369}$ & $64$ & $96$ & $80$ & $31$ & $315$ & $17$ & $185$ & $581$\\
    $9$ & $\num{4184}$ & $136$ & $192$ & $192$ & $63$ &  & $25$ & $665$ & $\num{2911}$\\
    $10$ & $\num{12980}$ & $256$ & $384$ & $448$ & $127$ &  & $56$ &  & $\num{11709}$\\
    $11$ & $\num{39079}$ & $528$ & $768$ & $\num{1024}$ & $255$ &  & $88$ &  & $\num{36416}$\\
    \bottomrule
\end{longtable}}
\noindent This computer-assisted analysis completes the proof for posets $I$ of size $|I|\leq 11$.\smallskip 

\textbf{Part $2^\circ$} We proceed by induction. Assume that $I$ is such a finite connected poset of size $|I|=n>11$ that for some $p\in \{1,\ldots,n\}$ the graph $\ov \CH(I^{(p)})$ is isomorphic to a path graph, and the thesis holds for posets of size $n-1$. To prove the inductive step we show that $\CH(I)$ has one of the forms described in \ref{lemma:pathext:posit:a}, \ref{lemma:pathext:posit:d} and \ref{lemma:pathext:princ:a}, or is indefinite \ref{lemma:pathext:indef}. Without loss of generality, we may assume that $p=1$ and $\deg_{\ov\CH(I^{(1)})}(n)=1$, i.e., 
\vspace*{-1ex}
 \begin{align*}
\ov\CH(I^{(1)})\simeq P(2,n) &= \!\!\!\tikzsetnextfilename{main_prf_5}\begin{tikzpicture}[baseline=(n2.base),label distance=-2pt]
        \matrix [matrix of nodes, ampersand replacement=\&, nodes={minimum height=1.3em,minimum width=1.3em,
                text depth=0ex,text height=1ex, execute at begin node=$, execute at end node=$}
        , column sep={15pt,between borders}, row sep={10pt,between borders}]
        {
                |(n2)|2  \& |(n3)|3  \& |(n4)| \& |(n5)| \& |(n6)|n-1  \& |(n7)|n \\
        };
        \foreach \x/\y in {2/3, 3/4, 5/6, 6/7}
        \draw [-, shorten <= -2.50pt, shorten >= -2.50pt] (n\x) to  (n\y);
        \draw [line width=1.2pt, line cap=round, dash pattern=on 0pt off 5\pgflinewidth, -, shorten <= -2.50pt, shorten >= -2.50pt] (n4) to  (n5);
\end{tikzpicture}\!\!\!\!
\simeq \CA_{n-1}.\\[-1ex]
\intertext{\vspace*{-1ex}Consider the poset $J\eqdef I^{(n)}$.}\\[-1.2cm]     
\intertext{\indent(A) If $J$ is not connected, then the element $n$ is an articulation point in the graph $\ov\CH(J)$. Since degree of $n$ in $\ov\CH(I^{(1)})$ equals one, we conclude that  $J$ has two connected components: $\{2,\ldots,n-1\}$ and $\{1\}$. Moreover,  the graph $\ov\CH(I)$ have the shape}
\ov\CH(I) &\simeq 
\!\!\!\tikzsetnextfilename{main_prf_6}\begin{tikzpicture}[baseline=(n2.base),label distance=-2pt]
        \matrix [matrix of nodes, ampersand replacement=\&, nodes={minimum height=1.3em,minimum width=1.3em,text depth=0ex,text height=1ex, execute at begin node=$, execute at end node=$},column sep={15pt,between borders}, row sep={10pt,between borders}]
        {
                |(n2)|2  \& |(n3)|3  \& |(n4)| \& |(n5)| \& |(n6)|n-1  \& |(n7)|n  \& |(n1)|1 \\
        };
        \foreach \x/\y in {2/3, 3/4, 5/6, 6/7, 7/1}
        \draw [-, shorten <= -2.50pt, shorten >= -2.50pt] (n\x) to  (n\y);
        \draw [line width=1.2pt, line cap=round, dash pattern=on 0pt off 5\pgflinewidth, -, shorten <= -2.50pt, shorten >= -2.50pt] (n4) to  (n5);
        \draw [thick,decoration={brace,raise=0.35cm},decorate] (n2.west)  -- (n7.east) node [pos=0.5,anchor=north,yshift=0.65cm] {$\smash{\scriptstyle \ov \CH(I^{(p)})}$};
        \draw[draw=black, dashed, fill=green, fill opacity=0.2, rounded corners]([xshift=1pt]n1.north west)--([xshift=-1pt]n1.north east)--([xshift=-1pt,yshift=2pt]n1.south east)--([xshift=1pt,yshift=2pt]n1.south west)--cycle;
        \draw[draw=black, dashed, fill=green, fill opacity=0.2, rounded corners]([xshift=1pt]n2.north west)--([xshift=-1pt]n6.north east)--([xshift=-1pt,yshift=2pt]n6.south east)--([xshift=1pt,yshift=2pt]n2.south west)--cycle;
\end{tikzpicture}\!\!\!\!
\simeq \CA_n,
\end{align*}
where the elements of the poset $J$ are highlighted. Hence the thesis follows.\smallskip

(B) Assume that $J$ is connected. Since $J^{(1)}=I^{(1,n)}$ is such a poset of size $n-1$, that the graph $\ov\CH(J^{(1)})$ is isomorphic to a path graph, by the inductive hypothesis one of the following conditions holds:
\begin{enumerate}[label=\normalfont{(\roman*)}, itemsep=0ex, topsep=1ex, partopsep=1ex, leftmargin=10ex]
 \item $\CH(J)\simeq \CD_{J}$, where 
$\CD_{J}\in\{\CA_n,\CD_n^{[1]},\CD_{n,s}^{[2]},\CD_{n,s}^{[3]}\}$, as described in \ref{lemma:pathext:posit:a} and \ref{lemma:pathext:posit:d};
\item $\ov \CH(J)$ is a cycle graph and $\CH(J)$ has at least two sinks, as described in \ref{lemma:pathext:princ:a};
\item $J$ is indefinite, as described in \ref{lemma:pathext:indef}.
\end{enumerate} 
We analyze these cases one by one. Since the digraphs $\CH(I)$ and $\CH(J)$  are connected, $\ov\CH(I^{(1)})\simeq \CA_{n-1}$ and $\deg_{\ov\CH(I^{(1)})}(n)=1$,  we conclude that the degree of the vertex $n$ in the graph $\ov\CH(I)$ equals two, if elements $1$ and $n$ are in relation, and one otherwise.\smallskip

(i) Assume that  $\CH(J)\simeq \CD_{J}\in\{\CA_n,\ab\CD_n^{[1]}, \ab\CD_{n,s}^{[2]},\ab\CD_{n,s}^{[3]}\}$. We have the following: \begin{enumerate}[label=\normalfont{($\arabic*^\circ$)},wide,labelindent=0pt]
    \item\label{lemma:pathext:prf:b:i:i} if $\CD_J=\CA_{n-1}$, then either $\CH(I)\simeq \CD_I\in\{\CA_n, \CD_n^{[1]}, \CD_{n,s'}^{[3]}\}$, where $2\leq s'\leq n-2$, or $\CH(I)$ has at least two sinks and $\ov\CH(I)$ is a cycle graph;
    
	\item\label{lemma:pathext:prf:b:i:ii} if $\CD_J=\CD_{n-1}^{[1]}$, then either  $\CH(I)\simeq \CD_I\in\{\CD_n^{[1]},\CD_{n,1}^{[2]},\CD_{n,n-3}^{[2]}, \CD_{n,2}^{[3]}\}$, or $I$ is indefinite, as it contains (as a subposet) some $\CF\in\{\CF_1,\ldots, \CF_6\}$;
                        
    \item\label{lemma:pathext:prf:b:i:iii} if $\CD_J=\CD_{n-1,s}^{[2]}$, then either  $\CH(I)\simeq \CD_{n,s'}^{[2]}$, where $1\leq s'\leq n-3$, or $I$ is indefinite, as it contains (as a subposet) some $\CF\in\{\CF_3, \ldots, \CF_6\}$;
    
    \item\label{lemma:pathext:prf:b:i:iiii} if $\CD_J=\CD_{n-1, s}^{[3]}$, then either  $\CH(I)\simeq \CD_{n,s'}^{[3]}$, where $2\leq s'< n-1$, or $I$ is indefinite, as it contains (as a subposet) some $\CF\in\{\CF_2, \ldots, \CF_7\}$.
\end{enumerate}

We describe the case \ref{lemma:pathext:prf:b:i:iii} in detail. One has to consider all such finite posets $I$, that $\ov\CH(I^{(1)})\simeq P(2,n)$, $\CH(I^{(n)})\simeq \CD_{n-1,s}^{[2]}$ and $\deg_{\ov\CH(I^{(1)})}(n)=1$. These are the only possibilities:\medskip

\noindent ($3^\circ a$) $\CH(I)\simeq \CD_{n-1,s}^{[2]}$ and $\CH(I)$\ has one of the following shapes:\medskip

\tikzsetnextfilename{fig3a3}%
\noindent
.\bigskip

The cases \ref{lemma:pathext:prf:b:i:i}, \ref{lemma:pathext:prf:b:i:ii} and \ref{lemma:pathext:prf:b:i:iiii} follow by similar arguments. Details are left to the reader.\medskip

(ii) Assume that $J$  is principal, i.e., $\ov \CH(J)$ is a cycle graph and $\CH(J)$ has at least two sinks. There are two possibilities: degree of vertex $n$ in the graph $\ov\CH(I)$ equals either one or two. In the first case the poset $I$ is indefinite, as it contains a subposet $\CF_1$, $\CF_2$, $\CF_3$ or $\CF_4$. In the second case, $\ov \CH(I)$ is a cycle graph and $\CH(I)$ contains the identical number of sinks as $\CH(J)$. Therefore, by \Cref{thm:digraphdegmax2}\ref{thm:digraphdegmax2:cycle:multsink}, $I$ is principal and  statement \ref{lemma:pathext:princ} follows.\medskip

(iii) Since poset $J\subset I$ is indefinite, the poset $I$ is also indefinite.
\end{proof}

\begin{center}
\textbf{Proof of \Cref{thm:a:main}}
\end{center}

Now we have all the necessary tools to prove the main result of this work.
\begin{proof}[Proof of \Cref{thm:a:main}]
Let $I=(V,\leq_I)$ be a finite connected non-negative poset of size $n$, rank $m$ and Dynkin type $\Dyn_I=\AA_m$.\smallskip

\ref{thm:a:main:posit} Our aim is to show that $m=n$  if and only if
\[
\ov\CH(I)\simeq P(1,n)=1 \,\rule[2.5pt]{22pt}{0.4pt}\,2\,\rule[2.5pt]{22pt}{0.4pt}\,
\hdashrule[2.5pt]{12pt}{0.4pt}{1pt}\,\rule[2.5pt]{22pt}{.4pt}\,n.
\]

Since the implication ``$\Leftarrow$'' is a consequence of \Cref{lemma:trees}\ref{lemma:trees:posit}, it is sufficient to prove ``$\Rightarrow$''. First, we show that for every vertex $v\in V$ we have $\deg_{\ov \CH(I)}(v)\leq 2$. We proceed by contradiction. Assume that there exists a vertex $v$ of degree at least $3$. If that is the case,  there exists such a subposet $J\subseteq I$ that its Hasse digraph $\CH(J)$ has the form 
\begin{center}
$\CH(J)\colon$
\tikzsetnextfilename{scnd_prf_1}\begin{tikzpicture}[baseline={([yshift=-2.75pt]current bounding box)},label distance=-2pt,xscale=0.65, yscale=0.74]
        \node[circle, fill=black, inner sep=0pt, minimum size=3.5pt, label=below:$\scriptscriptstyle v$] (n1) at (1, 0.50) {};
        \node[circle, fill=black, inner sep=0pt, minimum size=3.5pt, label=left:$\scriptscriptstyle $] (n2) at (2, 0.50) {};
        \node[circle, fill=black, inner sep=0pt, minimum size=3.5pt, label=left:$\scriptscriptstyle $] (n3) at (0, 1  ) {};
        \node[circle, fill=black, inner sep=0pt, minimum size=3.5pt, label=left:$\scriptscriptstyle $] (n4) at (0, 0  ) {};
        \foreach \x/\y in {1/2, 3/1, 4/1}
        \draw [-, shorten <= 2.50pt, shorten >= 2.50pt] (n\x) to  (n\y);
\end{tikzpicture}.
\end{center}
By \Cref{lemma:trees}\ref{lemma:trees:posit},  $\Dyn_{J}=\DD_4$. Since $J\subseteq I$, ~\cite[Proposition 2.25]{barotQuadraticFormsCombinatorics2019} yields $\Dyn_I\in\{\DD_n, \EE_n\}$, contrary to our assumptions. We conclude that $\deg_{\ov \CH(I)}(v)\leq 2$ for every vertex $v\in V$ and, in view of \Cref{thm:digraphdegmax2},  statement \ref{thm:a:main:posit} follows.\medskip

\ref{thm:a:main:princ}
We need to show that $m=n-1$ if and only if $\ov \CH(I)$ is a cycle graph and $\CH(I)$ has at least two sinks. To prove ``$\Rightarrow$'' assume that $I$ is a connected principal poset of the Dynkin type $\Dyn_I=\AA_{m}$. By definition, there exists such a $k\in I$, that the poset $J\eqdef I^{(k)}$ is positive of the Dynkin type $\Dyn_J=\AA_{m}$. By~\ref{thm:a:main:posit} we know that $\ov\CH(J)\simeq P(1,n)$ thus, by \Cref{lemma:pathext}\ref{lemma:pathext:princ}, $\ov \CH(I)$ is a cycle graph, $\CH(I)$ has at least two sinks, and ``$\Rightarrow$'' follows.

``$\Leftarrow$'' By \Cref{thm:digraphdegmax2}\ref{thm:digraphdegmax2:cycle:multsink}, every poset $I$ with $\ov \CH(I)$ being a cycle graph and $\CH(I)$ having at least two sinks is  principal of the Dynkin type $\Dyn_I=\AA_{n-1}$.\medskip

\ref{thm:a:main:crkbiggeri} Our aim is to show that the assumption $\Dyn_I=\AA_{m}$ yields $m\in\{n, n-1\}$, i.e., $I$ is either positive or principal. The proof is divided into two parts. First, we show that $m\neq n-2$. Then, using this result, we show that $m> n-2$.\smallskip

\textbf{Part $1^\circ$} 
[$m\neq n-2$] Assume, by contradiction, that $I$ is a connected non-negative poset of rank $n-2$ and Dynkin type $\Dyn_I=\AA_{n-2}$. Since there are no such posets of size $n\leq16$ (see \cite[Corollary 4.4(b)]{gasiorekAlgorithmicStudyNonnegative2015}), without loss of generality, we can assume that $n>16$. 

By \Cref{fact:specialzbasis}\ref{fact:specialzbasis:existance}, there exists such  a basis $h^{k_1}, h^{k_2}$ of the free abelian group $\Ker q_I\subseteq \ZZ^n$, that $h^{k_1}_{k_1} = h^{k_2}_{k_2} = 1$ and $h^{k_1}_{k_2} = h^{k_2}_{k_1} = 0$ where \mbox{$1 \leq k_1<k_2 \leq n$}.  Consider the posets $J_1\eqdef I^{(k_1)}$ and $J_2\eqdef I^{(k_2)}$. Since $\smash{J_1^{(k_2)}=J_2^{(k_1)}=I^{(k_1,k_2)}}$, the posets $J_1$ and $J_2$ are connected principal  of Dynkin type $\AA_{m}$, see \Cref{fact:specialzbasis}\ref{fact:specialzbasis:subbigraph} and \Cref{df:Dynkin_type}. It follows that  $\ov \CH(J_1)$ and $\ov \CH(J_2)$ are cycle graphs and the Hasse digraphs $\CH(J_1)$ and $\CH(J_2)$ have at least two sinks, see~\ref{thm:a:main:princ}. That is, one of the following conditions holds for the poset $I$:
\begin{enumerate}[label=\normalfont{(\roman*)}, itemsep=2.5ex]
    \item\label{thm:a:main:crkbiggeri:prf:i}
    $\ov\CH(I)$ is a cycle graph and $\CH(I)$ has at least two sinks; 
    \item\label{thm:a:main:crkbiggeri:prf:ii} $\CH(I)$ has at least two sinks and is of the shape
    $\smash{\CH(I)\colon
    \tikzsetnextfilename{scnd_prf_2}\begin{tikzpicture}[baseline=(n5.base),label distance=-2pt,xscale=0.55, yscale=0.51]
\node[circle, fill=black, inner sep=0pt, minimum size=3.5pt, label=below:$ $] (n1) at (0  , 2  ) {};
\node[circle, fill=black, inner sep=0pt, minimum size=3.5pt, label=below:$ $] (n2) at (1  , 2  ) {};
\node[circle, fill=black, inner sep=0pt, minimum size=3.5pt, label=below:$ $] (n3) at (2  , 2  ) {};
\node[circle, fill=black, inner sep=0pt, minimum size=3.5pt, label={[yshift=0.2ex]right:$\scriptscriptstyle k_1$}] (n4) at (3  , 3  ) {};
\node[circle, fill=black, inner sep=0pt, minimum size=3.5pt, label=left:$\scriptscriptstyle k_2$] (n5) at (3  , 1  ) {};
\node[circle, fill=black, inner sep=0pt, minimum size=3.5pt, label=below:$ $] (n6) at (4  , 2  ) {};
\node[circle, fill=black, inner sep=0pt, minimum size=3.5pt, label=below:$ $] (n7) at (5  , 2  ) {};
\node[circle, fill=black, inner sep=0pt, minimum size=3.5pt, label=below:$ $] (n8) at (6  , 2  ) {};
\node[circle, fill=black, inner sep=0pt, minimum size=3.5pt, label=below:$ $] (n9) at (6  , 0.4  ) {};
\node[circle, fill=black, inner sep=0pt, minimum size=3.5pt, label=below:$ $] (n10) at (5  , 0.4  ) {};
\node[circle, fill=black, inner sep=0pt, minimum size=3.5pt, label=below:$ $] (n11) at (4  , 0.4  ) {};
\node[circle, fill=black, inner sep=0pt, minimum size=3.5pt, label=below:$ $] (n12) at (2  , 0.4  ) {};
\node[circle, fill=black, inner sep=0pt, minimum size=3.5pt, label=below:$ $] (n13) at (1  , 0.4  ) {};
\node[circle, fill=black, inner sep=0pt, minimum size=3.5pt, label=below:$ $] (n14) at (0  , 0.4  ) {};
\foreach \x/\y in {1/2, 2/3, 6/7, 7/8, 8/9, 9/10, 10/11, 12/13, 13/14, 14/1}
        \draw [-, shorten <= 2.50pt, shorten >= 2.50pt] (n\x) to  (n\y);
\foreach \x/\y in {3/4, 3/5, 4/6, 5/6}
        \draw [-stealth, shorten <= 2.50pt, shorten >= 2.50pt] (n\x) to  (n\y);
\draw [line width=1.2pt, line cap=round, dash pattern=on 0pt off 5\pgflinewidth, -, shorten <= -1.50pt, shorten >= 2.50pt] (n11) to  (n12);
        \end{tikzpicture}}$;
        \item\label{thm:a:main:crkbiggeri:prf:iii} $I$ contains  an indefinite subposet $\CF_1$, $\CF_2$, $\CF_3$ or $\CF_4$.
\end{enumerate}

In the case~\ref{thm:a:main:crkbiggeri:prf:i}, the poset $I$ is principal, i.e., $m=n-1$, which contradicts the assumption. Now, we show that the same goes for~\ref{thm:a:main:crkbiggeri:prf:ii}. Without loss of generality, we may assume that the Hasse digraph $\CH(I)$ has the following form
\begin{center}
$\CH(I)\colon$
\tikzsetnextfilename{scnd_prf_3}\begin{tikzpicture}[baseline={([yshift=-7pt]current bounding box)},label distance=-2pt,xscale=0.64, yscale=0.64]
        \draw[draw, fill=cyan!10](0,1) circle (19pt);
        \draw[draw, fill=cyan!10](5,2) circle (17pt);
        \draw[draw, fill=cyan!10](9,1) circle (19pt);
        \draw[draw, fill=cyan!10](5,0) circle (17pt);
        
        \node[circle, fill=black, inner sep=0pt, minimum size=3.5pt, label=above left:$\scriptscriptstyle j$] (n1) at (1, 2  ) {};
        \node[circle, fill=black, inner sep=0pt, minimum size=3.5pt, label=left:$\scriptscriptstyle j\mpppss 1$] (n2) at (2, 3  ) {};
        \node[circle, fill=black, inner sep=0pt, minimum size=3.5pt, label=right:$\scriptscriptstyle j\mpppss 2$] (n3) at (2, 1  ) {};
        \node[circle, fill=black, inner sep=0pt, minimum size=3.5pt, label={[xshift=-0.7ex]above right:$\scriptscriptstyle j\mpppss 3$}] (n4) at (3, 2  ) {};
        \node[circle, fill=black, inner sep=0pt, minimum size=3.5pt, label=left:{$\scriptscriptstyle 1=r_1$}] (n5) at (0, 1  ) {};
        \node (n6) at (4, 2  ) {$\scriptscriptstyle $};
        \node[circle, fill=black, inner sep=0pt, minimum size=3.5pt, label=below:$\scriptscriptstyle \phantom{\mpppss }r_2\phantom{\mpppss }$] (n7) at (5, 2  ) {};
        \node[circle, fill=black, inner sep=0pt, minimum size=3.5pt, label=below:$\scriptscriptstyle r_2\mpppss 1$] (n8) at (6, 2  ) {};
        \node (n9) at (7, 2  ) {$\scriptscriptstyle $};
        \node (n10) at (8, 2  ) {$\scriptscriptstyle $};
        \node[circle, fill=black, inner sep=0pt, minimum size=3.5pt, label=below:$\scriptscriptstyle r_t$] (n11) at (9, 1  ) {};
        \node[circle, fill=black, inner sep=0pt, minimum size=3.5pt, label=below right:$\scriptscriptstyle r_t\mpppss 1$] (n12) at (8, 0  ) {};
        \node (n13) at (7, 0  ) {$\scriptscriptstyle $};
        \node (n14) at (6, 0  ) {$\scriptscriptstyle $};
        \node[circle, fill=black, inner sep=0pt, minimum size=3.5pt, label=below:$\scriptscriptstyle r_s$] (n15) at (5, 0  ) {};
        \node (n16) at (4, 0  ) {$\scriptscriptstyle $};
        \node (n17) at (3, 0  ) {$\scriptscriptstyle $};
        \node[circle, fill=black, inner sep=0pt, minimum size=3.5pt, label=below:$\scriptscriptstyle n\mppmss 1$] (n18) at (2, 0  ) {};
        \node[circle, fill=black, inner sep=0pt, minimum size=3.5pt, label=below:$\scriptscriptstyle n\phantom{\mppmss }$] (n19) at (1, 0  ) {};
        \foreach \x/\y in {1/2, 1/3, 2/4, 3/4}
        \draw [-stealth, shorten <= 2.50pt, shorten >= 2.50pt] (n\x) to  (n\y);
        \draw [line width=1.2pt, line cap=round, dash pattern=on 0pt off 5\pgflinewidth, -, shorten <= 4.50pt, shorten >= 4.50pt] (n5) to  (n1);
        \draw [line width=1.2pt, line cap=round, dash pattern=on 0pt off 5\pgflinewidth, -, shorten <= -2.50pt, shorten >= -2.50pt] (n9) to  (n10);
        
        \draw [line width=1.2pt, line cap=round, dash pattern=on 0pt off 5\pgflinewidth, -, shorten <= 2.50pt, shorten >= -2.50pt] (n4) to  (n6);
        \draw [line width=1.2pt, line cap=round, dash pattern=on 0pt off 5\pgflinewidth, -, shorten <= -3.50pt, shorten >= -3.50pt] (n14) to  (n13);

        \draw [line width=1.2pt, line cap=round, dash pattern=on 0pt off 5\pgflinewidth, -, shorten <= .10pt, shorten >= -2.50pt] (n17) to  (n16);
        
        \foreach \x/\y in {6/7, 10/11, 16/15}
        \draw [-, shorten <= -2.50pt, shorten >= 2.50pt] (n\x) to  (n\y);
        \foreach \x/\y in {5/19, 7/8, 8/9, 12/11, 13/12, 15/14, 19/18}
        \draw [-, shorten <= 2.50pt, shorten >= 2.50pt] (n\x) to  (n\y);
        \draw [-, shorten <= 2.50pt, shorten >= -2.50pt] (n18) to  (n17);
\end{tikzpicture}
\end{center}
where:
\begin{itemize}
    \item every  $r_1,\ldots,r_s\in \{1,\ldots, n\}\setminus \{j+1,j+2\}$ is either a source or a sink,
    \item $1 = r_1 < r_2 < \cdots < r_t<\cdots < r_s$ where $s>2$ is an even number,
    \item subdigraphs $\{1,\ldots,j,j+1,j+3,\ldots r_2\}$, $\{1,\ldots, j,j+2, j+3,\ldots,r_2\}$,  $\{r_s,\ldots,n, 1\}$ and $\{r_{t-1},\ldots,r_{t}\}$, where $3\leq t\leq s$, have exactly one sink.
\end{itemize}
Since the quadratic form  $q_{I}\colon \ZZ^n\to \ZZ$ \eqref{eq:quadratic_form} is given by the formula:
{\allowdisplaybreaks\begin{align*}
    q_{I}(x) =&\sum_{\mathclap{1\leq i\leq n}} x_i^2 \,\,+
    \sum_{\substack{1\leq i < r_2 \\ i\neq j+1}} x_i
    \sum_{\mathclap{i<k\leq r_2}}x_k + x_{j+1}
    \sum_{\mathclap{j+3\leq k \leq r_2}}x_k +
    \sum_{{2\leq t < s}}\left(\sum_{r_t\leq i<k \leq r_{t+1}} x_i x_k\right)  \\
    &+\sum_{r_s\leq i<k \leq n} x_i x_k + x_1(x_{r_s}\!+\cdots+ x_n)\\
    =& 
    \sum_{i\not\in\{r_1,\ldots,r_s,j+1,j+2\}} \frac{1}{2}x_i^2 +
    \frac{1}{2}(x_{j+1}-x_{j+2})^2 +
    \frac{1}{2} \sum_{1\leq t < s} \left(\sum_{r_t\leq i\leq r_{t+1}} x_i\right)^2\\
    & +\frac{1}{2}(x_{r_s} + \cdots + x_{n} + x_1)^2,
\end{align*}}%
then $q_I(v)\geq 0$ for every $v\in\ZZ^n$ and $I$ is non-negative. Consider the non-zero vector $h=[h_1,\ldots,h_n]\in\ZZ^n$, where $h_{r_i}=1$ if $i$ is odd, $-1$ if $i$ is even, and $h_k=0$ for $k\neq r_i$. It is straightforward to check that
\[
q_I(h) = \tfrac{1}{2}(h_{r_1}+h_{r_2})^2+\cdots+\tfrac{1}{2}(h_{r_{s-1}}+h_{r_s})^2 +\tfrac{1}{2}(h_{r_s}+h_1)^2=0,
\]
i.e., the poset $I$ is not positive and $h\in\Ker q_I$. Since the vector $h$ has the first coordinate equal $h_1=1$ and $I^{(1)}\simeq\CD_{n-r,s}^{[2]}$ is a positive poset of Dynkin type $\DD_{n-1}$ (see \Cref{lemma:pathext}\ref{lemma:pathext:posit:d} and \Cref{df:Dynkin_type}),  it follows that $I$ is principal of Dynkin type $\DD_{n-1}$. This contradicts the assumption that $\Dyn_I=\AA_{n-2}$.\smallskip

To finish this part of the proof, we note that every poset $I$ that is not described in~\ref{thm:a:main:crkbiggeri:prf:i} or~\ref{thm:a:main:crkbiggeri:prf:ii} contains (as a subposet) one of the posets  $\CF_1$, $\CF_2$, $\CF_3$ or $\CF_4$ presented in \Cref{tbl:indefposets},  hence is indefinite. This follows by the standard case-by-case inspection, as described in the proof of \Cref{lemma:pathext}. Details are left to the reader.\medskip

\textbf{Part $2^\circ$} [$m> n-2$] Let $I$ be a  connected non-negative poset of   rank $m$ and Dynkin type $\Dyn_I=\AA_{m}$. We show that the assumption  $m\in\{1,\ldots,n-3\}$ yields a contradiction. It follows from \Cref{fact:specialzbasis}\ref{fact:specialzbasis:existance} that there exists such  a basis $h^{k_1},\ldots, h^{k_r}$ of the free abelian group $\Ker q_I\subseteq \ZZ^n$, that $h^{k_i}_{k_i} = 1$ and $h^{k_i}_{k_j} = 0$, for $1 \leq i,j \leq r$ and $i \neq j$, where $r=n-m$ and $1 \leq k_1 < \ldots < k_r \leq n$. Moreover, by \Cref{fact:specialzbasis}\ref{fact:specialzbasis:subbigraph}, the poset $J\eqdef I^{(k_3,\ldots,k_r)}$ is connected non-negative of size $n'=m+2$ and rank $m=n'-2$. Since $J^{(k_1,k_2)} = I^{(k_1,\ldots,k_r)}\sim_\ZZ\AA_m$, it follows that $\Dyn_J=\AA_{n'-2}$ which yields a contradiction with \textbf{Part $1^\circ$}.
\end{proof}

\section{Enumeration of \texorpdfstring{$\AA_n$}{An} Dynkin type non-negative posets}

We finish the manuscript by giving explicit formulae~\eqref{fact:digrphnum:path:eq} and~\eqref{fact:digrphnum:cycle:eq} for the number of all possible orientations of the path and cycle graphs, up to  isomorphism of \textit{unlabeled} digraphs. We apply these results to devise the formula~\eqref{thm:typeanum:eq} for the number of non-negative Dynkin type $\AA_m$ posets of size~$n$.

\begin{fact}\label{fact:digrphnum:path} There are exactly
\begin{equation}\label{fact:digrphnum:path:eq}
	ONum(P_n)=
	\begin{cases}
		2^{\frac{n - 3}{2}} + 2^{n - 2}, & \textnormal{if $n\geq 1$ is odd,}\\
		2^{n-2}, & \textnormal{if $n\geq 2$ is even},\\
	\end{cases}
\end{equation}
digraphs $D$ with $\ov D\simeq P_n\eqdef 1 \,\rule[2.5pt]{22pt}{0.4pt}\,2\,\rule[2.5pt]{22pt}{0.4pt}\,
\hdashrule[2.5pt]{12pt}{0.4pt}{1pt}\,\rule[2.5pt]{22pt}{.4pt}\,n$, up to the isomorphism. 
\end{fact}
\begin{proof}
Here we follow arguments given in the proof of~\cite[Proposition 6.7]{gasiorekAlgorithmicCoxeterSpectral2020}. To calculate the number of non-isomorphic orientations among all $2^{n-1}$ possible orientations of edges of $P_n$, we consider two cases.
\begin{enumerate}[label=\normalfont{(\roman*)},wide]
    \item\label{fact:digrphnum:path:prf:i} First, assume that $|I|=n\geq 2$ is an even number. In this case, every digraph $I$  has exactly two representatives among $2^{n-1}$ edge orientations: one drawn ``from the left'' and the other ``from the right'' (i.e., symmetric along the path). Therefore, the  number $ONum(P_n)$ of all such  non-isomorphic digraphs equals $2^{n-2}$.
    
    \item\label{fact:digrphnum:path:prf:ii} Now we assume that $|I|=n\geq 1$ is an odd number. If $n=1$, then, up to  isomorphism, there exists exactly $1=2^{-1}+2^{-1}$ digraph. Otherwise, $n\geq 3$ and  among all $2^{n-1}$ edge orientations:
    \begin{itemize}
        \item  digraphs that are ``symmetric'' along the path have exactly one representation, and
        \item  the rest of digraphs have exactly two representations, analogously as in~\ref{fact:digrphnum:path:prf:i}.
    \end{itemize} 
    It is straightforward to check that there are  $2^\frac{n-1}{2}$ ``symmetric'' path digraphs, therefore in this case we obtain
    \[
    ONum(P_n)=\frac{2^{n-1}-2^\frac{n-1}{2}}{2}+2^\frac{n-1}{2}=2^{\frac{n - 3}{2}} + 2^{n - 2}.\qedhere
    \]
\end{enumerate}
\end{proof}

\begin{remark}
The formula~\eqref{fact:digrphnum:path:eq} describes the number of various combinatorial objects. For example the number of linear oriented trees with $n$ arrows or unique symmetrical triangle quilt patterns along the diagonal of an $n\times n$ square, see~\cite[OEIS sequence A051437]{oeis_A051437}.
\end{remark}

By \Cref{thm:a:main}\ref{thm:a:main:posit}, the Hasse digraph $\CH(I)$ of every positive connected poset $I$ of Dynkin type $\Dyn_I=\AA_n$ is an oriented path graph, as suggested in~\cite[Conjecture 6.4]{gasiorekAlgorithmicCoxeterSpectral2020}. Hence, \Cref{fact:digrphnum:path} gives an exact formula for the number of all, up to the poset isomorphism, such connected posets $I$ (see also \cite[Proposition 6.7]{gasiorekAlgorithmicCoxeterSpectral2020}). 

\begin{corollary}\label{cor:posit:num:poset}
Given $n\geq 1$, the total number of all finite non-isomorphic connected positive posets $I$ of Dynkin type $\AA_n$ and size $n$ equals $Nneg(n,\AA_n) \eqdef ONum(P_n)$.
\end{corollary}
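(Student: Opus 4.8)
The plan is to combine the structural dichotomy of \Cref{thm:a:main}\ref{thm:a:main:posit} with the digraph count of \Cref{fact:digrphnum:path}. Concretely, I would exhibit a bijection between the set of isomorphism classes of finite connected positive posets of Dynkin type $\AA_n$ and the set of isomorphism classes of orientations of the path graph $P_n$ (these are exactly the \emph{oriented paths} of \Cref{thm:a:main}), and then read off the cardinality from \eqref{fact:digrphnum:path:eq}.

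First I would note that the map $I \mapsto \CH(I)$ is compatible with isomorphisms: a poset isomorphism $I \to I'$ sends cover relations to cover relations and hence induces a digraph isomorphism $\CH(I) \to \CH(I')$; conversely, since every finite poset equals the transitive closure of its Hasse digraph (i.e.\ $I = I_{\CH(I)}$ in the notation of \Cref{df:hasse}), any digraph isomorphism $\CH(I) \to \CH(I')$ is automatically a poset isomorphism. Thus $I \simeq I'$ if and only if $\CH(I) \simeq \CH(I')$, so $I \mapsto \CH(I)$ descends to an injection on isomorphism classes.

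Next I would identify its image with the orientations of $P_n$. If $I$ is connected, positive, of Dynkin type $\AA_n$, then $\CH(I)$ is an orientation of $P_n$ by \Cref{thm:a:main}\ref{thm:a:main:posit}. Conversely, let $Q$ be any orientation of $P_n$. Then $Q$ is acyclic, so it defines the poset $I_Q$, and because two consecutive vertices of $P_n$ are joined by a unique edge there is no transitivity shortcut along the path; hence every arc of $Q$ is a cover relation of $I_Q$ and $\CH(I_Q) = Q$. Since $\ov Q \simeq \AA_n$, \Cref{lemma:trees}\ref{lemma:trees:posit} (equivalently \Cref{thm:a:main}\ref{thm:a:main:posit}) shows that $I_Q$ is connected, positive and of Dynkin type $\AA_n$. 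Therefore $I \mapsto \CH(I)$ is a bijection from isomorphism classes of connected positive posets of Dynkin type $\AA_n$ onto isomorphism classes of orientations of $P_n$, with inverse induced by $Q \mapsto I_Q$.

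Finally, \Cref{fact:digrphnum:path} counts the latter set as $N(P_n)$ of \eqref{fact:digrphnum:path:eq}, so $N(n,\AA) = N(P_n)$. The argument is uniform in $n \geq 1$ (the degenerate case $n=1$ being the one-point poset). No serious obstacle arises; the only point worth spelling out is the identity $\CH(I_Q) = Q$ for a path orientation $Q$, which guarantees that $\CH$ and $Q \mapsto I_Q$ are mutually inverse, and this is immediate from the shape of $P_n$.
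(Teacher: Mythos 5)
Your argument is correct and matches the paper's: the paper likewise derives the count by combining Theorem \ref{thm:a:main}(a) with Fact \ref{fact:digrphnum:path}, treating the correspondence $I\mapsto\CH(I)$ between connected positive posets of type $\AA_n$ and oriented paths as a bijection on isomorphism classes. Your write-up merely makes explicit the two routine verifications (isomorphism of posets $\Leftrightarrow$ isomorphism of Hasse digraphs, and $\CH(I_Q)=Q$ for an orientation $Q$ of $P_n$) that the paper leaves implicit.
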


Similarly, the description given in \Cref{thm:a:main}\ref{thm:a:main:princ} makes it possible to count all connected principal posets $I$ of Dynkin type $\AA_{n-1}$. First, we need to know the exact number of all, up to isomorphism, orientations of the cycle graph $C_n$.

\begin{fact}\label{fact:digrphnum:cycle}
Let $C_n \eqdef$%
\tikzsetnextfilename{cycle_cn}\begin{tikzpicture}[baseline=(n11.base),label distance=-2pt,xscale=0.65, yscale=0.74]
    \node[circle, fill=black, inner sep=0pt, minimum size=3.5pt, label={[name=n11]left:$1$}] (n1) at (0, 0  ) {};
    \node[circle, fill=black, inner sep=0pt, minimum size=3.5pt, label={[yshift=-0.3ex]above:$\scriptscriptstyle 2$}] (n2) at (1, 0  ) {};
    \node (n3) at (2, 0  ) {$ $};
    \node[circle, fill=black, inner sep=0pt, minimum size=3.5pt, label=right:$n$] (n4) at (5, 0  ) {};
    \node (n5) at (3, 0  ) {$ $};
    \node[circle, fill=black, inner sep=0pt, minimum size=3.5pt, label={[yshift=-0.5ex]above:$\scriptscriptstyle n\mppmss 1$}] (n6) at (4, 0  ) {};
    \draw[shorten <= 2.50pt, shorten >= 2.50pt] (n1) .. controls (0.2,0.6) and (4.8,0.6) .. (n4);
    \draw [line width=1.2pt, line cap=round, dash pattern=on 0pt off 5\pgflinewidth, -, shorten <= -2.50pt, shorten >= -2.50pt] (n3) to  (n5);
    \foreach \x/\y in {1/2, 6/4}
    \draw [-, shorten <= 2.50pt, shorten >= 2.50pt] (n\x) to  (n\y);
    \draw [-, shorten <= 2.50pt, shorten >= -2.50pt] (n2) to  (n3);
    \draw [-, shorten <= -2.50pt, shorten >= 2.50pt] (n5) to  (n6);
\end{tikzpicture}%
be the cycle graph on $n\geq 3$ vertices. The number $ONum(C_n)$ of digraphs $\CD$ with $\ov \CD=C_ n$, up to the isomorphism, equals
\begin{equation}\label{fact:digrphnum:cycle:eq}
    ONum(C_n)=
    \begin{cases}
        \frac{1}{2n} \sum_{d\mid n}\left(2^{\frac{n}{d}}\varphi(d)\right), & \textnormal{if $n\geq 3$ is odd,}\\[0.1cm]
        \frac{1}{2n} \sum_{d\mid n}\left(2^{\frac{n}{d}}\varphi(d)\right)+ 2^{\frac{n}{2}-2}, & \textnormal{if $n\geq 4$ is even},\\
    \end{cases}
\end{equation}
where $\varphi$ is the Euler's totient function.
\end{fact}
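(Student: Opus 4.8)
The plan is to recognize $N(C_n)$ as a \emph{necklace-counting} quantity and to compute it with the Cauchy--Frobenius (Burnside) lemma applied to the dihedral group. First I would observe that a directed graph $D$ with $\ov D = C_n$ is the same thing as an orientation of the (labelled) cycle, i.e.\ a function $\omega$ assigning to each of the $n$ edges one of two directions, so there are $2^n$ of them; and that two such $D_1,D_2$ are isomorphic as unlabelled digraphs precisely when some automorphism of the underlying graph $C_n$ carries $\omega_1$ to $\omega_2$. Since $\mathrm{Aut}(C_n)$ is the dihedral group $D_n$ of order $2n$, the number $N(C_n)$ equals the number of $D_n$-orbits on the set $X$ of $2^n$ orientations, and the Burnside lemma gives $N(C_n)=\frac{1}{2n}\sum_{g\in D_n}\lvert\mathrm{Fix}_X(g)\rvert$. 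It then remains to evaluate the contribution of the $n$ rotations and the $n$ reflections separately.

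For a rotation $\rho^k$ I would fix once and for all a ``positive'' direction on each edge (say: along increasing vertex index around the circle); because $\rho^k$ preserves the cyclic order of the vertices, it acts on the set of $n$ edges by a shift, and $\omega$ is $\rho^k$-invariant exactly when it is constant on each orbit of this shift. That shift has $\gcd(n,k)$ orbits, so $\lvert\mathrm{Fix}(\rho^k)\rvert = 2^{\gcd(n,k)}$; summing over $k=0,1,\dots,n-1$ and grouping the indices $k$ according to the common value $d=\gcd(n,k)$ (there being $\varphi(n/d)$ of them) yields $\sum_{k}\lvert\mathrm{Fix}(\rho^k)\rvert=\sum_{d\mid n}\varphi(n/d)\,2^{d}=\sum_{d\mid n}\varphi(d)\,2^{n/d}$ after the substitution $d\mapsto n/d$.

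The reflections are where the parity of $n$ enters, and this is the step I expect to be the most delicate. The key point is that a reflection $\sigma$ \emph{reverses} the cyclic order, so if $\sigma$ fixes an edge $\{a,b\}$ setwise it must interchange $a$ and $b$; hence no orientation of that edge is $\sigma$-invariant, and $\lvert\mathrm{Fix}(\sigma)\rvert = 0$ the moment $\sigma$ fixes at least one edge. If on the other hand $\sigma$ fixes no edge, it splits the $n$ edges into $n/2$ transposed pairs and one may choose the direction of one edge in each pair freely, so $\lvert\mathrm{Fix}(\sigma)\rvert = 2^{n/2}$. I would then recall the standard description of the reflections in $D_n$: when $n$ is odd every reflection passes through one vertex and the midpoint of the opposite edge, so it fixes exactly one edge and contributes $0$; when $n$ is even there are $n/2$ reflections through two antipodal vertices, each fixing no edge (contribution $2^{n/2}$), and $n/2$ reflections through two antipodal edge-midpoints, each fixing those two edges (contribution $0$). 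Thus the total reflection contribution is $0$ for odd $n$ and $\frac{n}{2}\cdot 2^{n/2}$ for even $n$.

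Finally I would assemble the pieces in Burnside's formula: for odd $n$, $N(C_n)=\frac{1}{2n}\sum_{d\mid n}\varphi(d)2^{n/d}$, and for even $n$, $N(C_n)=\frac{1}{2n}\sum_{d\mid n}\varphi(d)2^{n/d}+\frac{1}{2n}\cdot\frac{n}{2}\cdot 2^{n/2}=\frac{1}{2n}\sum_{d\mid n}\varphi(d)2^{n/d}+2^{n/2-2}$, which is exactly~\eqref{fact:digrphnum:cycle:eq}. Aside from the reflection bookkeeping, everything is routine; one small sanity check worth running is $n=3,4$, where the formula gives $2$ and $4$, matching a direct enumeration of orientations of the triangle and the square up to isomorphism.
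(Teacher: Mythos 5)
Your proof is correct, and it takes a genuinely different route from the one in the paper. You apply the Cauchy--Frobenius (Burnside) lemma directly to the action of the full dihedral group $D_n=\mathrm{Aut}(C_n)$ of order $2n$ on the $2^n$ labelled orientations: the rotation part reproduces the necklace sum $\sum_{d\mid n}\varphi(d)2^{n/d}$, and the reflection part is handled by the observation that a reflection fixing an edge setwise swaps its endpoints and hence fixes no orientation of that edge, which forces the case split on the parity of $n$ (contribution $0$ for odd $n$, and $\frac{n}{2}\cdot 2^{n/2}$ for even $n$ from the vertex--vertex reflections). The paper instead proceeds in two stages: it first identifies digraphs on $C_n$ with binary necklaces (arcs coloured by clockwise/anticlockwise), quotients by rotations using the known necklace formula $|\CN_2(n)|=\frac{1}{n}\sum_{d\mid n}2^{n/d}\varphi(d)$, and then argues that each isomorphism class of digraphs has either two necklace representatives (clockwise and anticlockwise readings) or, for even $n$, possibly one, which requires separately counting the $2^{n/2-1}$ ``rotationally symmetric'' digraphs and correcting $|\CN_2(n)|/2$ by $2^{n/2-2}$; it also proves along the way that every isomorphism between such digraphs is a rotation in one of the two senses, which is the analogue of your identification of isomorphism classes with $D_n$-orbits. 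Your version is self-contained (it derives rather than cites the rotation count) and replaces the somewhat delicate bookkeeping of self-reverse-complementary necklaces by the cleaner fixed-point analysis of reflections; the paper's version has the advantage of connecting the count explicitly to the classical necklace literature. Your sanity checks at $n=3,4$ (values $2$ and $4$) agree with the formula.
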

\begin{proof}
Assume that $\CD$ is such a digraph that $\ov \CD=C_n$. Without loss of generality, we may assume that $\CD$ is depicted in the circle layout (on the plane), and its arrows are labeled with two colors:
\begin{itemize}
    \item \textit{black}: if the arrow is  clockwise oriented, and
    \item \textit{white}: if the arrow is counterclockwise oriented.
\end{itemize} 
That is, every $\CD$ can be viewed as a binary combinatorial necklace $\CN_2(n)$.
For example, for $n=5$ there exist $32$ orientations of edges of the cycle  $C_5$  that yield exactly $8$ different binary necklaces of length $5$ shown in \Cref{tab:cycle_nekl_5}.
\begin{longtable}{|@{}c@{}c@{}|@{}c@{}c@{}|}\hline
    \ringv{a}{0}{0}{0}{0}{0} & \ringv{b}{1}{1}{1}{1}{1} & \ringv{c}{0}{0}{0}{0}{1} & \ringv{d}{0}{1}{1}{1}{1}\\\hline
    \ringv{e}{0}{0}{0}{1}{1} & \ringv{f}{0}{0}{1}{1}{1} & \ringv{g}{0}{0}{1}{0}{1} & \ringv{h}{0}{1}{0}{1}{1}\\\hline
	\caption{Binary combinatorial necklaces of length $5$}\label{tab:cycle_nekl_5}
\end{longtable}

Moreover, up to digraph isomorphism, every $\CD$ has in this case exactly two representations among the necklaces: ``clockwise'' and ''anticlockwise'' ones, as shown in the \Cref{tab:cycle_nekl_5} (isomorphic digraphs are gathered in boxes).\pagebreak

On the other hand, if $|\CD|=n\geq 4$ is an even number, certain digraphs have exactly one representation among necklaces. As an illustration, let us consider the $n=6$ case.
\begin{longtable}{@{}|@{}c@{}c@{}|@{}c@{}c@{}|@{}c@{}c@{}c@{}}\cline{1-6} 
    \ringvi{a}{0}{0}{0}{0}{0}{0} & \ringvi{b}{1}{1}{1}{1}{1}{1} & \ringvi{c}{0}{0}{0}{0}{0}{1} & \ringvi{d}{0}{1}{1}{1}{1}{1} & \ringvi{e}{0}{0}{0}{0}{1}{1} & \multicolumn{1}{@{}c@{}|@{}}{\ringvi{f}{0}{0}{1}{1}{1}{1}} & \ringvi{i}{0}{0}{0}{1}{1}{1}\\\cline{1-6}     
	\ringvi{g}{0}{0}{0}{1}{0}{1} & \ringvi{h}{0}{1}{0}{1}{1}{1} & \ringvi{j}{0}{0}{1}{0}{0}{1} & \ringvi{k}{0}{1}{1}{0}{1}{1} & \ringvi{l}{0}{0}{1}{0}{1}{1} & \ringvi{m}{0}{1}{1}{0}{1}{0} & \ringvi{n}{0}{1}{0}{1}{0}{1}\\\cline{1-4}
    \caption{Binary combinatorial necklaces of length $6$}\label{tab:cycle_nekl_6}
\end{longtable}
Every such a ``rotationally symmetric'' digraph is uniquely determined by a directed path graph of length $\frac{n}{2} + 1$ and, by \Cref{fact:digrphnum:path}, there are exactly $2^{\frac{n}{2}-1}$ such digraphs. 

Now we show that every isomorphism $f\colon\{1,\ldots,n\}\to\{1,\ldots,n\}$ of digraphs $\CD_1$ and $\CD_2$ with $\ov \CD_1=\ov \CD_2=C_n$ has a form of a ``clockwise'' or ``anticlockwise'' \textit{rotation}. Fix a vertex $v_1\in \CD_1$ and consider the sequence $v_1,v_2,\ldots,v_n$ of vertices, where $v_{i+1}$ is a ``clockwise'' neighbour of $v_i$ (i.e., we have either $v_i\to v_{i+1}$ \textit{black} arrow or $v_i\gets v_{i+1}$ \textit{white} arrow in digraph $\CD_1$). One of two possibilities holds: $f(v_2)$ is either a ``clockwise'' neighbor of $f(v_{1})$ or an ``anticlockwise'' one. If the first possibility holds, then $f(v_{i+1})$ is a ``clockwise'' neighbour of $f(v_i)$ for every $2\leq i < n$, hence isomorphism $f$ encode a ``clockwise'' rotation. On the other hand, the assumption that $f(v_2)$ is an ``anticlockwise'' neighbour of $f(v_{1})$ implies that $f(v_{i+1})$ is an ``anticlockwise'' neighbour of $f(v_i)$, i.e.,  $f$ encodes an ``anticlockwise'' rotation.\smallskip

Summing up, if $|\CD|=n\geq 3$ is an odd number, the  digraph $\CD$  has exactly two representatives among binary necklaces $\CN_2(n)$: one  ``clockwise'' and the other ``anticlockwise''. Since $|\CN_2(n)|=\frac{1}{n} \sum_{d\mid n}\left(2^{\frac{n}{d}}\varphi(d)\right)$, see~\cite{riordanCombinatorialSignificanceTheorem1957}, the first part of the equality~\eqref{fact:digrphnum:cycle:eq} follows. Assume now that $|\CD|=n\geq 4$ is an even number. In the formula $|\CN_2(n)|/2$ we count only half (i.e., $2^{\frac{n}{2}-2}$)  of ``rotationally symmetric'' digraphs. Hence, $ONum(C_n)=\frac{1}{2n} \sum_{d\mid n}\left(2^{\frac{n}{d}}\varphi(d)\right)+ 2^{\frac{n}{2}-2}$ in this case.
\end{proof}

In the proof of \Cref{fact:digrphnum:cycle} we show that the number of cyclic graphs with oriented edges, up to the symmetry of the dihedral group, coincides with the number of such digraphs, up to isomorphism of unlabeled digraphs.

\begin{remark}
The formula~\eqref{fact:digrphnum:cycle:eq} is described in~\cite[OEIS sequence A053656]{oeis_A053656} and, among others, counts the number of minimal fibrations of a bidirectional $n$-cycle over the $2$-bouquet (up to precompositions with automorphisms of the $n$-cycle), see~\cite{boldiFibrationsGraphs2002}.
\end{remark}

\begin{corollary}\label{cor:cycle_pos:dag_dyna:num}
Let $n\geq 3$ be an integer. Then, up to isomorphism, there exists exactly:
\begin{enumerate}[label=\normalfont{(\alph*)}]
    \item\label{cor:cycle_pos:dag_dyna:num:cycle} $ONum(C_n)-1$ directed acyclic graphs $\CD$ whose underlying graph is  $\ov \CD=C_n$,
    \item\label{cor:cycle_pos:dag_dyna:num:poset} $Nneg(n,\AA_{n-1})=ONum(C_n)-\lceil\frac{n+1}{2}\rceil$ principal posets $I$ of Dynkin type $\AA_{n-1}$,
\end{enumerate}
where $ONum(C_n)$  is given by the formula \eqref{fact:digrphnum:cycle:eq}.
\end{corollary}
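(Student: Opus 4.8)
The plan is to reduce both parts to counting orientations of the cycle graph $C_n$ up to isomorphism of unlabeled digraphs, a number which \Cref{fact:digrphnum:cycle} already evaluates as $N(C_n)$, and then to strike out the orientations that fail the property in question. The elementary facts I would record first are: the only oriented cycle that can occur inside an orientation of $C_n$ is the whole cycle, so an orientation of $C_n$ is acyclic iff it has at least one sink iff it is not a ``directed $n$-cycle''; the graph $C_n$ admits exactly two directed-$n$-cycle orientations and these are interchanged by the reflection $i\mapsto\shortminus i$, hence form a single isomorphism class of digraphs; and in every acyclic orientation of $C_n$ the numbers of sources and of sinks coincide (sources and sinks alternate around the cycle) and are $\geq 1$.

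For \ref{cor:cycle_pos:dag_dyna:num:cycle}: a digraph $D$ with $\ov D=C_n$ is acyclic precisely when it is not a directed $n$-cycle, so the acyclic ones exhaust all $N(C_n)$ isomorphism classes except the single directed-cycle class, which gives $N(C_n)-1$.

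For \ref{cor:cycle_pos:dag_dyna:num:poset}: first I would set up the bijection. By \Cref{thm:quiverdegmax2}\ref{thm:quiverdegmax2:cycle:multsink}, every digraph $D$ with $\ov D=C_n$ having at least two sinks satisfies $D=\CH(I_D)$ with $I_D$ principal of the stated Dynkin type, and conversely by \Cref{thm:a:main}\ref{thm:a:main:princ} the Hasse digraph of every such principal poset is $2$-regular with at least two sinks; since a finite connected poset is recovered from its Hasse digraph by reflexive–transitive closure, $I\mapsto\CH(I)$ induces a bijection between isomorphism classes of these principal posets and isomorphism classes of digraphs $D$ with $\ov D=C_n$ having at least two sinks. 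It then remains to count the latter: from the $N(C_n)$ classes one removes the unique directed-cycle class and the classes with exactly one sink. An orientation of $C_n$ with exactly one sink has exactly one source, and following the two arcs of $C_n$ joining the source to the sink exhibits it, up to isomorphism, as the digraph $\bAA_{a,b}$ of \Cref{thm:quiverdegmax2}\ref{thm:quiverdegmax2:cycle:onesink} — two consistently oriented chains of lengths $a,b\geq 1$ with $a+b=n$ — whose isomorphism type is determined precisely by the unordered pair $\{a,b\}$; there are $\lfloor n/2\rfloor$ such pairs. Hence the count is $N(C_n)-1-\lfloor n/2\rfloor=N(C_n)-\lceil\tfrac{n+1}{2}\rceil$, as asserted.

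The step needing the most care is the one-sink count in \ref{cor:cycle_pos:dag_dyna:num:poset}: one must verify that $\bAA_{a,b}\simeq\bAA_{a',b'}$ as digraphs if and only if $\{a,b\}=\{a',b'\}$. This follows because any digraph isomorphism between orientations of $C_n$ is induced by a dihedral symmetry of $C_n$, must send the unique source and unique sink of $\bAA_{a,b}$ to those of $\bAA_{a',b'}$, and can at most interchange the two source-to-sink arcs; but it deserves to be written out in full. Note that the ``rotationally symmetric'' complication that governs the even case of \Cref{fact:digrphnum:cycle} does not resurface here, since it is already folded into the value $N(C_n)$, which is used as a black box.
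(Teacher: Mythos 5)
Your proposal is correct and follows essentially the same route as the paper: both parts are obtained by subtracting from $N(C_n)$ the single oriented-cycle isomorphism class and, for part (b), the $\lfloor n/2\rfloor$ classes with exactly one sink, with \Cref{thm:a:main}\ref{thm:a:main:princ} supplying the identification of principal posets with $2$-regular Hasse digraphs having at least two sinks. You simply spell out details the paper leaves implicit, namely the bijection between poset and digraph isomorphism classes and the classification of one-sink orientations as the digraphs $\bAA_{a,b}$ determined by the unordered pair $\{a,b\}$.
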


\begin{proof}
Since, up to isomorphism, there exists exactly one cyclic orientation of the $C_n$
graph, \ref{cor:cycle_pos:dag_dyna:num:cycle}  follows directly from \Cref{fact:digrphnum:cycle}.\pagebreak

To prove~\ref{cor:cycle_pos:dag_dyna:num:poset}, we note that by \Cref{thm:a:main}\ref{thm:a:main:princ} it is sufficient to count all oriented cycles that have at least two sinks. Since, among all possible orientations of a cycle, there are:
\begin{itemize}
	 \item $\lfloor\frac{n}{2}\rfloor$ cycles with exactly one sink,
	 \item $1$ oriented cycle
\end{itemize}
and $\lfloor\frac{n}{2}\rfloor+1=\lceil\frac{n+1}{2}\rceil$, the statement~\ref{cor:cycle_pos:dag_dyna:num:poset} follows from \Cref{fact:digrphnum:cycle}.
\end{proof}

\begin{center}
\textbf{Proof of \Cref{thm:typeanum}}
\end{center}

Now, we can devise an exact formula for the total number of non-negative posets of size $n$ and Dynkin type $\AA_m$. 

\begin{proof}[Proof of \Cref{thm:typeanum}]
We note that $Nneg(n,\AA)=Nneg(n,\AA_n)+Nneg(n,\AA_{n-1})$ by \Cref{thm:a:main}, hence by \Cref{cor:posit:num:poset} and \Cref{cor:cycle_pos:dag_dyna:num}\ref{cor:cycle_pos:dag_dyna:num:poset}, for $n\ge 3$ we have
\begin{equation*}
Nneg(n,\AA)= 
\begin{cases}
\frac{1}{2n} \sum_{d\mid n}\left(2^{\frac{n}{d}}\varphi(d)\right)+ 2^{n - 2} + 2^{\frac{n - 3}{2}}-\lceil\frac{n+1}{2}\rceil, & \textnormal{if $n\geq 3$ is odd,}\\
\frac{1}{2n} \sum_{d\mid n}\left(2^{\frac{n}{d}}\varphi(d)\right) + 2^{n-2} + 2^{\frac{n}{2}-2}-\lceil\frac{n+1}{2}\rceil, & \textnormal{if $n\geq 4$ is even}.\\
\end{cases}\end{equation*}
Since $\frac{n - 3}{2} = \lceil\frac{n}{2}-2\rceil$ for odd values of $n$ and $Nneg(1,\AA)=Nneg(2,\AA) = 1$, it follows that
 \begin{equation*}
    Nneg(n,\AA)=\frac{1}{2n} \sum_{d\mid n}\big(2^{\frac{n}{d}}\varphi(d)\big) + 
    \big\lfloor 2^{n - 2} + 2^{\lceil\frac{n}{2}-2\rceil} - \tfrac{n+1}{2}\big\rfloor
\end{equation*}
for any $n\geq 1$.
\end{proof}

Summing up, we get the following asymptotic description of connected non-negative posets $I$ of Dynkin type $\AA_m$.
\begin{corollary}
	Let $Nneg(n,\AA_{n-1})$ be the number of principal posets $I$ of Dynkin type $\AA_{n-1}$ and $Nneg(n,\AA)$ be the number of all non-negative posets $I$ of size $n$ and Dynkin type $\AA_{m}$. Then
	\begin{enumerate}[label=\normalfont{(\alph*)}]
		\item  $\lim_{n\to \infty} \frac{Nneg(n+1,\AA)}{Nneg(n,\AA)}=2$ and $Nneg(n,\AA)\approx 2^{n-2}$, 
		\item the  number of connected non-negative posets of Dynkin type $\AA_{m}$ grows exponentially,
		\item $\lim_{n\to\infty}\frac{Nneg(n,\AA_{n-1})}{Nneg(n,\AA)}=0$, hence almost all such posets are positive.
	\end{enumerate}
\end{corollary}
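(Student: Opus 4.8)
The plan is to deduce all three statements directly from the closed form for $Nneg(\AA_n)$ established in \Cref{thm:typeanum} together with the identity $N(n,\wt\AA)=N(C_n)-\lceil\tfrac{n+1}{2}\rceil$ from \Cref{cor:cycle_pos:dag_dyna:num}\ref{cor:cycle_pos:dag_dyna:num:poset}; the entire argument is elementary asymptotics. The point is that in~\eqref{thm:typeanum:eq} a single summand, $2^{n-2}$, dominates while every remaining term is of strictly smaller order, so that $Nneg(\AA_n)=2^{n-2}\bigl(1+o(1)\bigr)$; likewise $N(n,\wt\AA)$ is dominated by $\tfrac{2^{n-1}}{n}$.

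First I would dispose of the divisor sum $S(n)\eqdef\sum_{d\mid n}2^{n/d}\varphi(d)$ occurring in both~\eqref{thm:typeanum:eq} and~\eqref{fact:digrphnum:cycle:eq}. Separating the term $d=1$ gives $S(n)=2^{n}+\sum_{d\mid n,\ d\geq 2}2^{n/d}\varphi(d)$, and since $\varphi(d)\leq d\leq n$ and $n/d\leq n/2$ for each divisor $d\geq 2$, the remaining sum is at most $\tau(n)\,n\,2^{n/2}$, where $\tau(n)$ is the number of divisors of $n$. Using even the crude bound $\tau(n)\leq n$, this is $O(n^{2}2^{n/2})=o(2^{n})$, hence $S(n)=2^{n}\bigl(1+o(1)\bigr)$ and $\tfrac{1}{2n}S(n)=\tfrac{2^{n-1}}{n}\bigl(1+o(1)\bigr)$.

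For~(a) I would then substitute this into~\eqref{thm:typeanum:eq}: the term $\tfrac{1}{2n}S(n)$ is $O(2^{n-1}/n)$, which is $o(2^{n-2})$ because $\tfrac{2^{n-1}/n}{2^{n-2}}=\tfrac{2}{n}\to0$; the terms $2^{(n-3)/2}$ (for odd $n$) and $2^{n/2-2}$ (for even $n$) are $o(2^{n-2})$; and $\lceil\tfrac{n+1}{2}\rceil$ is polynomial. Hence $Nneg(\AA_n)=2^{n-2}(1+o(1))$, which yields both $Nneg(\AA_n)\approx 2^{n-2}$ and $\lim_{n\to\infty}\tfrac{Nneg(\AA_{n+1})}{Nneg(\AA_n)}=2$. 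Statement~(b) is then immediate, a quantity asymptotic to $2^{n-2}$ growing at an exponential rate. For~(c), combining \Cref{cor:cycle_pos:dag_dyna:num}\ref{cor:cycle_pos:dag_dyna:num:poset} with \Cref{fact:digrphnum:cycle} and the estimate above gives $N(n,\wt\AA)=\tfrac{1}{2n}S(n)+O(2^{n/2})=\tfrac{2^{n-1}}{n}(1+o(1))$, so that, using~(a),
\[
\frac{N(n,\wt\AA)}{Nneg(\AA_n)}=\frac{\tfrac{2^{n-1}}{n}(1+o(1))}{2^{n-2}(1+o(1))}=\frac{2}{n}\,(1+o(1))\xrightarrow[n\to\infty]{}0;
\]
since by \Cref{thm:a:main} every non-negative poset of Dynkin type $\AA_{n-\crk_I}$ is positive or principal, this also shows the proportion of positive ones tends to $1$.

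I do not anticipate any genuine obstacle; the only slightly technical ingredient is the bound on the tail $\sum_{d\mid n,\ d\geq2}2^{n/d}\varphi(d)$, and there any elementary estimate on $\tau(n)$ suffices, since we merely need this tail to be $o(2^{n})$.
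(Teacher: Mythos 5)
Your proposal is correct and follows the same route as the paper, which simply cites \Cref{cor:posit:num:poset}, \Cref{cor:cycle_pos:dag_dyna:num} and \Cref{thm:typeanum} without writing out the asymptotics. Your explicit estimate of the divisor sum (isolating the $d=1$ term and bounding the tail by $O(n^2 2^{n/2})$) and the resulting identification of $2^{n-2}$ as the dominant term are exactly the computations the paper's one-line proof leaves implicit, and they are carried out correctly.
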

\begin{proof}
	Apply \Cref{cor:cycle_pos:dag_dyna:num}\ref{cor:cycle_pos:dag_dyna:num:poset} and \Cref{thm:typeanum}.
\end{proof}

\begin{landscape}
	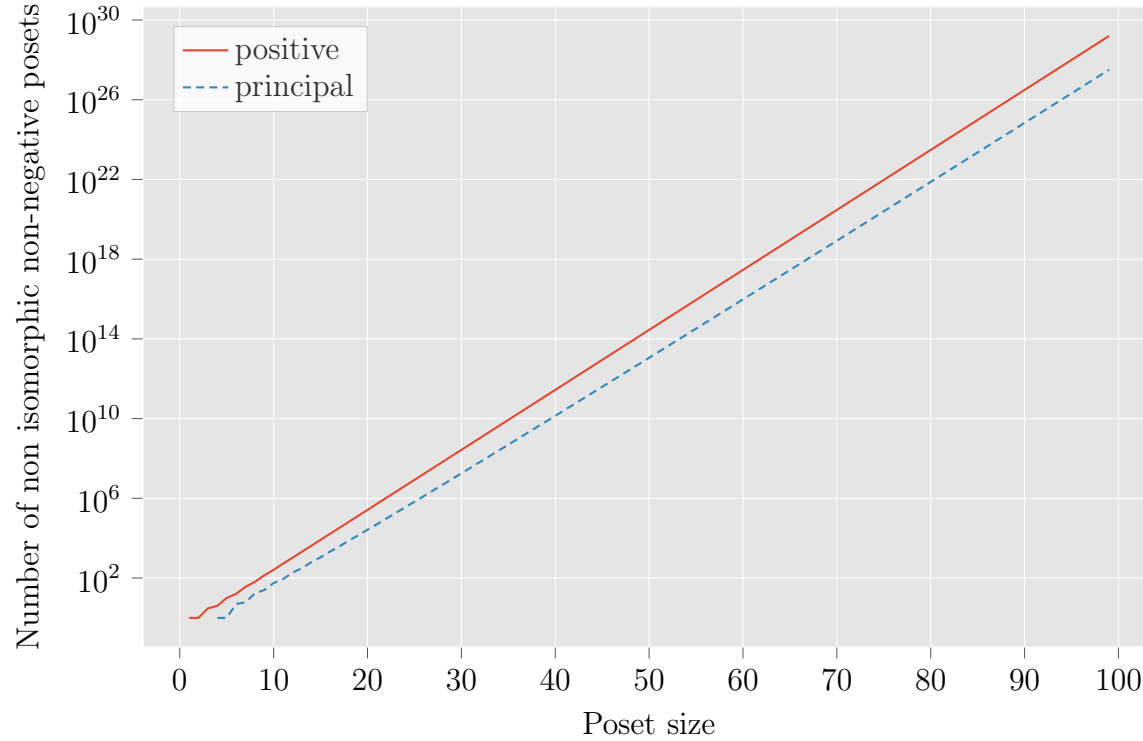
\begin{figure}[H]
        \centering
\tikzsetnextfilename{plot}
\begin{tikzpicture}
        \definecolor{chocolate2267451}{RGB}{226,74,51}
        \definecolor{dimgray85}{RGB}{85,85,85}
        \definecolor{gainsboro229}{RGB}{229,229,229}
        \definecolor{lightgray204}{RGB}{204,204,204}
        \definecolor{steelblue52138189}{RGB}{52,138,189}
        
        \begin{axis}[
                axis background/.style={fill=gainsboro229},
                axis line style={white},
                height=10cm,
                legend cell align={left},
                legend style={
                        fill opacity=0.8,
                        draw opacity=1,
                        text opacity=1,
                        at={(0.03,0.97)},
                        anchor=north west,
                        draw=lightgray204
                },
                log basis y={10},
                tick align=outside,
                tick pos=left,
                width=15cm,
                x grid style={white},
                xlabel={Poset size},
                xmajorgrids,
                xmin=-3.9, xmax=103.9,
                xtick style={color=dimgray85},
                y grid style={white},
                ylabel={Number of non isomorphic non-negative posets},
                ymajorgrids,
                ymin=0.0346740460021202, ymax=4.56988275953839e+30,
                ymode=log,
                ytick style={color=dimgray85},
                ytick={1e-06,0.01,100,1000000,10000000000,100000000000000,1e+18,1e+22,1e+26,1e+30,1e+34,1e+38},
                yticklabels={
                        \(\displaystyle {10^{-6}}\),
                        \(\displaystyle {10^{-2}}\),
                        \(\displaystyle {10^{2}}\),
                        \(\displaystyle {10^{6}}\),
                        \(\displaystyle {10^{10}}\),
                        \(\displaystyle {10^{14}}\),
                        \(\displaystyle {10^{18}}\),
                        \(\displaystyle {10^{22}}\),
                        \(\displaystyle {10^{26}}\),
                        \(\displaystyle {10^{30}}\),
                        \(\displaystyle {10^{34}}\),
                        \(\displaystyle {10^{38}}\)
                }
                ]
                \addplot [thick, chocolate2267451]
                table {%
                        1 1
                        2 1
                        3 3
                        4 4
                        5 10
                        6 16
                        7 36
                        8 64
                        9 136
                        10 256
                        11 528
                        12 1024
                        13 2080
                        14 4096
                        15 8256
                        16 16384
                        17 32896
                        18 65536
                        19 131328
                        20 262144
                        21 524800
                        22 1048576
                        23 2098176
                        24 4194304
                        25 8390656
                        26 16777216
                        27 33558528
                        28 67108864
                        29 134225920
                        30 268435456
                        31 536887296
                        32 1073741824
                        33 2147516416
                        34 4294967296
                        35 8590000128
                        36 17179869184
                        37 34359869440
                        38 68719476736
                        39 137439215616
                        40 274877906944
                        41 549756338176
                        42 1099511627776
                        43 2199024304128
                        44 4398046511104
                        45 8796095119360
                        46 17592186044416
                        47 35184376283136
                        48 70368744177664
                        49 140737496743936
                        50 281474976710656
                        51 562949970198528
                        52 1.12589990684262e+15
                        53 2.25179984723968e+15
                        54 4.5035996273705e+15
                        55 9.00719932184986e+15
                        56 1.8014398509482e+16
                        57 3.60287971531817e+16
                        58 7.20575940379279e+16
                        59 1.44115188344291e+17
                        60 2.88230376151712e+17
                        61 5.76460752840294e+17
                        62 1.15292150460685e+18
                        63 2.30584301028744e+18
                        64 4.61168601842739e+18
                        65 9.22337203900226e+18
                        66 1.84467440737096e+19
                        67 3.68934881517141e+19
                        68 7.37869762948382e+19
                        69 1.47573952598266e+20
                        70 2.95147905179353e+20
                        71 5.90295810375886e+20
                        72 1.18059162071741e+21
                        73 2.36118324146918e+21
                        74 4.72236648286965e+21
                        75 9.44473296580801e+21
                        76 1.88894659314786e+22
                        77 3.77789318630946e+22
                        78 7.55578637259143e+22
                        79 1.51115727452104e+23
                        80 3.02231454903657e+23
                        81 6.04462909807864e+23
                        82 1.20892581961463e+24
                        83 2.41785163923036e+24
                        84 4.83570327845852e+24
                        85 9.67140655691923e+24
                        86 1.93428131138341e+25
                        87 3.86856262276725e+25
                        88 7.73712524553363e+25
                        89 1.54742504910681e+26
                        90 3.09485009821345e+26
                        91 6.18970019642708e+26
                        92 1.23794003928538e+27
                        93 2.4758800785708e+27
                        94 4.95176015714152e+27
                        95 9.90352031428311e+27
                        96 1.98070406285661e+28
                        97 3.96140812571323e+28
                        98 7.92281625142643e+28
                        99 1.58456325028529e+29
                };
                \addlegendentry{positive}
                \addplot [thick, densely dashed, steelblue52138189]
                table {%
                        1 0
                        2 0
                        3 0
                        4 1
                        5 1
                        6 5
                        7 6
                        8 17
                        9 25
                        10 56
                        11 88
                        12 185
                        13 309
                        14 615
                        15 1088
                        16 2113
                        17 3847
                        18 7419
                        19 13788
                        20 26489
                        21 49929
                        22 95873
                        23 182350
                        24 350637
                        25 671079
                        26 1292748
                        27 2485520
                        28 4797871
                        29 9256381
                        30 17904460
                        31 34636818
                        32 67126265
                        33 130150571
                        34 252679814
                        35 490853398
                        36 954506467
                        37 1857283137
                        38 3616952517
                        39 7048151652
                        40 13744170619
                        41 26817356755
                        42 52358246192
                        43 102280151400
                        44 199912301315
                        45 390937468385
                        46 764879842415
                        47 1497207322906
                        48 2932035377905
                        49 5744387279793
                        50 11259007792596
                        51 22076468764166
                        52 43303859993497
                        53 84973577874889
                        54 166800021000937
                        55 327534518354268
                        56 643371444844535
                        57 1.26416831646405e+15
                        58 2.48474476084341e+15
                        59 4.88526061274085e+15
                        60 9.60767948245851e+15
                        61 1.89003525345384e+16
                        62 3.71910168318295e+16
                        63 7.32013653718966e+16
                        64 1.44115189183153e+17
                        65 2.83796062672455e+17
                        66 5.58992246870488e+17
                        67 1.1012981536543e+18
                        68 2.17020518956359e+18
                        69 4.27750587216466e+18
                        70 8.43279729967401e+18
                        71 1.66280509960199e+19
                        72 3.27942117042521e+19
                        73 6.46899518201321e+19
                        74 1.27631526599333e+20
                        75 2.51859545753048e+20
                        76 4.97091208793648e+20
                        77 9.81270957479407e+20
                        78 1.93738112131825e+21
                        79 3.82571461903364e+21
                        80 7.55578637287318e+21
                        81 1.49250101186991e+22
                        82 2.948599560092e+22
                        83 5.82614852826327e+22
                        84 1.15135792345376e+23
                        85 2.27562507221577e+23
                        86 4.4983286311467e+23
                        87 8.89324740865934e+23
                        88 1.75843755580759e+24
                        89 3.47735966091399e+24
                        90 6.87744466270555e+24
                        91 1.36037366954437e+25
                        92 2.69117399844828e+25
                        93 5.32447328724895e+25
                        94 1.05356599088153e+26
                        95 2.08495164511222e+26
                        96 4.12646679761865e+26
                        97 8.16785180559426e+26
                        98 1.61690127580146e+27
                        99 3.20113787936422e+27
                };
                \addlegendentry{principal}
        \end{axis}
	\end{tikzpicture}
	\captionsetup{width=.9\linewidth}
    \caption{Logarithmic scale plot of the number of connected non-negative posets $I$ of Dynkin type $\Dyn_I=\AA_m$}
\label{fig:coxgrow}
\end{figure}
{\newcommand{\msep}{\,\,\,\,}
\begin{longtable}{lr@{\msep}r@{\msep}r@{\msep}r@{\msep}r@{\msep}r@{\msep}r@{\msep}r@{\msep}r@{\msep}r@{\msep}r@{\msep}r@{\msep}r@{\msep}r@{\msep}r@{\msep}r@{\msep}r@{\msep}r@{\msep}r@{\msep}r}\toprule
        $n$ & $1$ & $2$ & $3$ & $4$ & $5$ & $6$ & $7$ & $8$ & $9$ & $10$ & $11$ & $12$ & $13$ & $14$ & $15$ & $16$ & $17$ & $18$ & $19$ & $20$\\\midrule
        positive & $\num{1}$ & $\num{1}$ & $\num{3}$ & $\num{4}$ & $\num{10}$ & $\num{16}$ & $\num{36}$ & $\num{64}$ & $\num{136}$ & $\num{256}$ & $\num{528}$ & $\num{1024}$ & $\num{2080}$ & $\num{4096}$ & $\num{8256}$ & $\num{16384}$ & $\num{32896}$ & $\num{65536}$ & $\num{131328}$ & $\num{262144}$\\
        principal & $\num{0}$ & $\num{0}$ & $\num{0}$ & $\num{1}$ & $\num{1}$ & $\num{5}$ & $\num{6}$ & $\num{17}$ & $\num{25}$ & $\num{56}$ & $\num{88}$ & $\num{185}$ & $\num{309}$ & $\num{615}$ & $\num{1088}$ & $\num{2113}$ & $\num{3847}$ & $\num{7419}$ & $\num{13788}$ & $\num{26489}$\\\midrule
        all & $\num{1}$ & $\num{1}$ & $\num{3}$ & $\num{5}$ & $\num{11}$ & $\num{21}$ & $\num{42}$ & $\num{81}$ & $\num{161}$ & $\num{312}$ & $\num{616}$ & $\num{1209}$ & $\num{2389}$ & $\num{4711}$ & $\num{9344}$ & $\num{18497}$ & $\num{36743}$ & $\num{72955}$ & $\num{145116}$ & $\num{288633}$\\\bottomrule
        \captionsetup{width=.9\linewidth}
        \caption{Number of connected non-negative posets $I$ of Dynkin type $\Dyn_I=\AA_m$ and size $1\leq |I|\leq 20$}
\end{longtable}}%
\end{landscape}

\section{Future work}\label{sec:conclusions}
In the present work, we give a complete description of connected non-negative posets $I$ of Dynkin type $\Dyn_I=\AA_m$ and, in particular, we show that $m\in\{n,n-1\}$. Computer experiments suggest that there is an upper bound for the rank of $\EE_n$ type posets as well.

\begin{conjecture}\label{conj:typeE}
If $I$ is a Dynkin type $\EE_m$ non-negative connected poset, then $m\geq n-3$.
\end{conjecture}
The conjecture yields $|I|\leq 11$, and consequently, we get the following.
\begin{conjecture}
If $I$ is  a non-negative connected poset of size $n>11$ and rank $m<n-1$, then $\Dyn_I=\DD_m$.
\end{conjecture}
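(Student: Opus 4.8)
The plan is to deduce the statement from the preceding conjecture (that $\crk_I\le 3$ whenever $\Dyn_I=\EE_{|I|-\crk_I}$) together with \Cref{thm:a:main}\ref{thm:a:main:crkbiggeri}; once that input is granted the argument is purely formal. The one preliminary fact I would record is the identity $|I|=|\Dyn_I|+\crk_I$, valid for every connected non-negative poset $I$: by \Cref{df:Dynkin_type} the diagram $\Dyn_I$ is weakly Gram $\ZZ$-congruent with a connected positive poset on $|I|-\crk_I$ points --- the poset obtained from $I$ by deleting the support of a special $\ZZ$-basis of its kernel, see \Cref{fact:specialzbasis}\ref{fact:specialzbasis:positsubbigraph} --- and weak Gram $\ZZ$-congruence preserves the size of the underlying matrix, so $|\Dyn_I|=|I|-\crk_I$.

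With this in hand I would argue by elimination among the five simply-laced families $\AA_m$, $\DD_m$, $\EE_6$, $\EE_7$, $\EE_8$. First, if $\Dyn_I=\AA_{|I|-\crk_I}$ then \Cref{thm:a:main}\ref{thm:a:main:crkbiggeri} forces $\crk_I\le 1$, contradicting the hypothesis $\crk_I>1$; hence $\Dyn_I$ is not of type $\AA$. Second, if $\Dyn_I\in\{\EE_6,\EE_7,\EE_8\}$ then $|\Dyn_I|\le 8$, and the preceding conjecture gives $\crk_I\le 3$, so $|I|=|\Dyn_I|+\crk_I\le 8+3=11$, contradicting $|I|>11$; hence $\Dyn_I$ is not of type $\EE$. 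The only surviving possibility is $\Dyn_I=\DD_{|I|-\crk_I}$, which is the claim.

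The genuine difficulty is confined entirely to the input conjecture $\crk_I\le 3$ for $\EE$-type posets; the deduction above contributes nothing hard. A proof of that bound would, I expect, follow the template used here for $\AA_n$: begin from the finitely many connected positive posets whose Hasse digraph underlies $\EE_6$, $\EE_7$ or $\EE_8$ (partly catalogued already via \Cref{lemma:pathext}), work out their one-point corank extensions in the manner of \Cref{lemma:pathext}, and exhibit a family of indefinite obstructions in the spirit of \Cref{lemma:posindef} and \Cref{tbl:indefposets} ruling out corank $\ge 4$. Conditional on the conjecture, no further obstacle remains; note that for a connected non-negative poset of corank $>1$ the type $\AA$ is already excluded by \Cref{thm:a:main}\ref{thm:a:main:crkbiggeri} regardless of size, so the sole role of the hypothesis $|I|>11$ is to discard the three $\EE$-types and thereby pin the Dynkin type to $\DD$.
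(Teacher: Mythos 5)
Your proposal is correct and matches the paper's own (one-line) justification: the paper derives this conjecture from the preceding one precisely by observing that an $\EE$-type poset with $\crk_I\leq 3$ has $|I|\leq 11$, while type $\AA$ with $\crk_I>1$ is excluded by \Cref{thm:a:main}\ref{thm:a:main:crkbiggeri}, leaving only type $\DD$. Your explicit use of $|I|=|\Dyn_I|+\crk_I$ and the case elimination is just a fleshed-out version of the same conditional argument.
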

In other words, checking the Dynkin type of a connected non-negative poset $I$ that has at least $n\geq 12$ elements is straightforward (compare with \cite{makurackiQuadraticAlgorithmCompute2020} and \cite{zajacPolynomialTimeInflation2020}).
\begin{proposition}
If \Cref{conj:typeE} holds, the Dynkin type of a connected non-negative poset $I$ of size $n\geq 12$ and rank $m$, encoded in the form of the adjacency list of the Hasse digraph~$\CH(I)$, can be calculated in $O(n)$. Moreover, assuming that this adjacency list is sorted by degrees of vertices, $\Dyn_I$ can be calculated in $O(1)$.
\end{proposition}
\begin{proof}
First, we note that the assumptions yield  $\Dyn_I\in\{\AA_m, \DD_m\}$. Moreover,  $\Dyn_I=\AA_m$ if and only if one of the following conditions hold:
\begin{enumerate}[label=\normalfont{(\roman*)}]
	\item $\deg_{\ov \CH(I)}(v)=2$ for all $v\in \CH(I)$, or 
	\item $\deg_{\ov \CH(I)}(v)=2$ for all but two $v_i\in \CH(I)$ with $\deg_{\ov \CH(I)}(v_i)=1$,
\end{enumerate}
see \Cref{thm:a:main}. In the pessimistic case, to verify these conditions, one has to examine the degrees of all $n$ vertices, thus we have $O(n)$ complexity. In the case of the adjacency list sorted by degrees of vertices, this can be simplified to checking degrees of at most two vertices, which yields $O(1)$ complexity.
\end{proof}

Nevertheless, this description does not give any insights into the structure of $\DD_m$ type non-negative connected posets.
\begin{open}
Give a structural description of Hasse digraphs of $\DD_m$ type non-negative connected posets.
\end{open}


\end{document}